\documentclass[11pt]{amsart} 
\usepackage{verbatim, latexsym, amssymb, amsmath,color,mathrsfs}
\usepackage[linktoc=all]{hyperref}
\hypersetup{
	colorlinks,
	citecolor=black,
	filecolor=black,
	linkcolor=black,
	urlcolor=black
}

\def\R{\mathbb R}

\def\N{\mathbb N}
\def\Z{\mathbb Z}

\def\area{\mathrm{area}}

\newtheorem*{conj}{Conjecture}
\newtheorem*{prob}{Problem}

\newtheorem{thm}{Theorem}[section]
\newtheorem{lemm}[thm]{Lemma}
\newtheorem{cor}[thm]{Corollary}
\newtheorem{prop}[thm]{Proposition}
\theoremstyle{remark}

\theoremstyle{definition}

\title[]{Counting Minimal Lagrangians via Mirzakhani Functions}
\author{Ben Lowe, Fernando C. Marques, and Andr\'e Neves}
\address{University of Chicago \\ Department of Mathematics \\ Chicago IL 60637\\ USA}
\email{{loweb24@gmail.com}}
\address{Princeton University \\ Fine Hall \\ Princeton NJ 08544 \\ USA}
\email{coda@math.princeton.edu}
\address{University of Chicago \\ Department of Mathematics \\ Chicago IL 60637\\ USA}
\email{aneves@math.uchicago.edu}
\thanks{ The second  author is partly supported by NSF-DMS-2506810 and a Simons Investigator Grant. The third author is partly supported by NSF-DMS-2005468 and a Simons Investigator Grant.}

\begin{document}
	
% Rafi-Souto instead of Souto-Rafi.

	\maketitle
	%\ben{I added hyperlinks (change to the preamble)}
	\begin{abstract}
		We show that {for $k>1$} the number of genus $k$ minimal Lagrangians  with area at most $A$ in a product of hyperbolic surfaces grows on the order of $A^{6(k-1)}$, with an explicit leading constant given in terms of the Mirzakhani function {, and we obtain a similar result for products of nonpositively curved surfaces.} We also prove rigidity of the Lagrangian area spectrum, and obtain analogous counting results for products of a higher genus surface with a circle.
	\end{abstract}

\section{Introduction} 
	Let $\Sigma$ be a closed orientable surface of genus $g\geq 2$. Minimal Lagrangian surfaces in $\Sigma\times\Sigma$ have been extensively studied. To mention a few examples, minimal Lagrangians arise naturally in higher-rank Teichm\"{u}ller theory \cite{labourie}, in the study of three-dimensional anti--de Sitter geometry \cite{bonsante0}, in the study of minimal diffeomorphisms \cite{markovic}, and in the deformation theory of area-preserving maps \cite{wang}. Further properties were studied in \cite{ouyang}. Existence and uniqueness were established by Schoen \cite{schoen} and later extended by Lee \cite{lee}. 
	
	In this paper, we borrow ideas from dynamical systems and Teichm\"{u}ller theory to study minimal Lagrangian surfaces of large area. More precisely, we provide evidence that minimal Lagrangian surfaces behave like closed geodesics on hyperbolic surfaces. In the case where the minimal Lagrangian has genus one, this analogy is immediate, since such surfaces are finite covers of products of closed geodesics. The main challenge, and the focus of this work, is to uncover analogous phenomena for minimal Lagrangian surfaces of genus two or higher.

	A celebrated result of Mirzakhani  \cite{mirza2}  counted the growth rate of simple closed geodesics on $\Sigma$. Later, in \cite{mirza}, she extended the result to count closed geodesics with a bounded number of self-intersections.  We show that the number of $\pi_1$-injective genus $g$ minimal Lagrangians with area bounded by $A$ grows like the number of simple closed geodesics with length bounded by $A$ on a surface of genus $g$.
	
	{Another classical result in the theory of closed geodesics, proved by Croke \cite{Croke} and Otal \cite{Otal} independently, states that a negatively curved metric on a surface can be recovered from its marked {length} spectrum. This had been conjectured by Katok in the 1980's (see \cite{BurnsKatok}).  In the same vein we show that a nonpositively curved metric on a product of Riemann surfaces is uniquely determined by the area of $\pi_1$-injective minimal surfaces.}
	
\subsection{Main results}
	In order to state the main theorems we need to introduce some notation first. We will be brief but the reader can find the missing definitions in Section \ref{notation}.
\subsection{Minimal surfaces in $\Sigma\times S^1$}
	
	We start with a result about the number of minimal surfaces on $M=\Sigma\times S^1$, endowed with the product metric $\sigma\oplus d\theta^2$. Besides being of independent interest, it also serves to contextualize the  theorems regarding minimal Lagrangians.

Let $\mathcal S^*(k)$ denote the set of all surface subgroups of $\pi_1(M)$ with genus $k$ modulo the equivalence relation of conjugacy.  We abuse notation and see an element $\Pi\in \mathcal S^*(k)$ as being the set of all $\pi_1$-injective  immersions
	\begin{equation}\label{homopty.classes}
		\iota:S\rightarrow \Sigma\times S^1\quad\text{with $S$ a genus $k$ surface and }\iota_*(\pi_1(S))\in\Pi.
	\end{equation}
	We denote an immersion as above   by $(S,\iota)$.  	
	Given $\Pi\in \mathcal S^*(k)$ we define 
	$$\text{area}_{\sigma\oplus d\theta^2}(\Pi)=\inf\{\text{area}_{\iota^*(\sigma\oplus d\theta^2)}(S): (S,\iota)\in \Pi\}.$$
	It is known (using Schoen--Yau \cite{schoen-yau}, for instance) that there exists an essential genus-$k$ minimal surface in $\Pi$ which achieves $\text{area}_{\sigma\oplus d\theta^2}(\Pi)$.  If the metric on $\Sigma$ is hyperbolic, those minimal surfaces were classified by Meeks--Rosenberg in  \cite{meeks-rosenberg}. The first result follows from the growth of closed geodesics  on negatively curved surfaces.
	\begin{thm}\label{codimension.one.torus.thm} With $\sigma$ a negatively curved metric on $\Sigma$ with  topological entropy $\delta(\sigma)$ we have
		$$\lim_{A\to \infty}\frac{\ln \#\{\Pi\in \mathcal S^*(1):\text{area}_{\sigma\oplus d\theta^2}(\Pi)\leq A\}}{A}=\frac{\delta(\sigma)}{2\pi}.$$
	\end{thm}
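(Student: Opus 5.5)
Throughout, the circle factor is taken to have length $2\pi$, i.e. $S^1=\R/2\pi\Z$ with metric $d\theta^2$; this is what produces the factor $2\pi$ in the statement.

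The plan is to reduce the count of genus one surface subgroups to a count of closed geodesics on $(\Sigma,\sigma)$. A $\Z^2$ subgroup of $\pi_1(M)=\pi_1(\Sigma)\times\Z$ must have nontrivial projection to $\pi_1(\Sigma)$. Since $\Sigma$ has genus $g\ge2$, every abelian subgroup of $\pi_1(\Sigma)$ is cyclic, so this projection is infinite cyclic. Hence every such subgroup has the form
$$H=\langle (\gamma,a),(1,b)\rangle,\qquad \gamma\in\pi_1(\Sigma)\setminus\{1\},\ b\ge1,\ 0\le a<b,$$
after a change of basis. Conjugacy in $\pi_1(M)$ only conjugates $\gamma$, so $\mathcal S^*(1)$ is in bijection with triples $([\gamma],a,b)$, where $[\gamma]$ is a conjugacy class of nontrivial elements of $\pi_1(\Sigma)$, possibly non-primitive. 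We identify $\langle\gamma\rangle$ with $\langle\gamma^{-1}\rangle$ by replacing $a$ with $-a \bmod b$; this affects the count only by a factor of $2$, which is harmless at exponential scale.

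Next we compute the area. For $\Pi$ given by such a triple, the lift to the cover $\widetilde\Sigma/\langle\gamma\rangle\times\R$ is a torus group in a flat-cylinder-like product. By a calibration or coarea argument, any torus in $\Pi$ has area at least $\ell_\sigma(\gamma)\cdot 2\pi b$. Concretely, project to the geodesic $c_\gamma$ via the nearest point projection, which is $1$-Lipschitz in nonpositive curvature, and the projected map has degree $b$ onto the flat torus $c_\gamma\times S^1$. Equality holds for the flat torus $c_\gamma\times S^1$ with the $b$-fold twisted covering determined by $(a,b)$. So
$$\area(\Pi)=2\pi b\,\ell_\sigma(\gamma).$$
This is consistent with the Schoen--Yau existence statement quoted above, and with the Meeks--Rosenberg classification in the hyperbolic case.

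Finally we count. Let $N(L)$ denote the number of closed geodesics, primitive or not, of length at most $L$. Then
$$\#\{\Pi:\area(\Pi)\le A\}\asymp \sum_{b\ge1} b\,N\!\Big(\frac{A}{2\pi b}\Big).$$
By Margulis' theorem, $N(L)\sim e^{\delta L}/(\delta L)$; we only need $\ln N(L)/L\to\delta$, and non-primitive geodesics contribute negligibly. The $b=1$ term gives the lower bound
$$\liminf_{A\to\infty}\frac{\ln\#}{A}\ge\frac{\delta}{2\pi}.$$
For the upper bound, note that $N(L)=0$ for $L$ below the systole, so only $b\le A/(2\pi\,\mathrm{sys})$ contribute. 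Bounding each term by $N(A/2\pi)$ gives a total of at most $C A^2 e^{(\delta+\epsilon)A/2\pi}$, and the polynomial factor disappears after taking logarithms.

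The main obstacle is the sharp area lower bound $\area(\Pi)\ge 2\pi b\,\ell(\gamma)$ for an arbitrary immersed torus in the class. The one delicate point is making the degree argument rigorous: the composite map $T^2\to c_\gamma\times S^1$ has degree $b$ in the appropriate sense, and the map $(x,\theta)\mapsto(\pi(x),\theta)$ is area non-increasing because the nearest-point projection $\pi$ is $1$-Lipschitz. The group-theoretic classification of the subgroups is routine.
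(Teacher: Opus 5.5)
Your proof is correct, but it reaches the area formula by a different route from the paper's.

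\textbf{The paper's route.} The paper takes a Schoen--Yau area minimizer in $\Pi$. This minimizer is conformal and harmonic, and Eells--Sampson then shows it is totally geodesic. Hence it is a degree-$d$ covering of $\alpha\times S^1$ with $\alpha$ a \emph{primitive} closed geodesic. The counting is then crude: $d\le c_0A$, and each pair $(\alpha,d)$ gives at most $d^2$ classes (the index-$d$ subgroups of $\Z^2$). This yields $\#\le c_0^3A^3\,\#\{\alpha:2\pi l_\sigma(\alpha)\le A\}$, and the lower bound comes from the $d=1$ tori.

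\textbf{Your route.} You parametrize $\mathcal S^*(1)$ exactly by triples $([\gamma],a,b)$, with $\gamma$ possibly non-primitive, and prove $\area(\Pi)=2\pi b\,l_\sigma(\gamma)$ directly. For the lower bound you use a calibration-type argument: the $1$-Lipschitz projection onto $c_\gamma\times S^1$ is area non-increasing, and the projected torus has degree $\pm b$. For the upper bound you use the explicit $b$-fold cover of $c_\gamma\times S^1$. The two parametrizations match via $\gamma=\alpha^m$, $d=mb$ and $l_\sigma(\gamma)=m\,l_\sigma(\alpha)$.

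\textbf{Comparison.} Your approach avoids the existence and regularity theory of minimal surfaces entirely, and the area formula holds verbatim in nonpositive curvature. It also gives the count exactly, as $\sum_b b\,N(A/2\pi b)$ up to the factor from $\gamma\leftrightarrow\gamma^{-1}$. The paper's approach gives the extra structural fact that minimizers are totally geodesic tori. That is the form it reuses for $\mathcal L^*(1)$ in $\Sigma\times\Sigma$.

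\textbf{A slip to fix.} The torus does not lift to $(\widetilde\Sigma/\langle\gamma\rangle)\times\R$, because $(1,b)\in H$. The correct cover is $(\widetilde\Sigma/\langle\gamma\rangle)\times S^1$, which corresponds to $\langle\gamma\rangle\times\Z\supset H$, and that is the cover your projection to $c_\gamma\times S^1$ actually uses.

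\textbf{The degree step is not really delicate.} Both $\tilde\iota$ and $P\circ\tilde\iota$ induce the inclusion $H\subset\langle\gamma\rangle\times\Z$ on $\pi_1$, which has index $b$, so $|\deg(P\circ\tilde\iota)|=b$. Also, $P$ is smooth, since the normal exponential map of a geodesic in a Hadamard surface is a diffeomorphism. Therefore
\[
\area(\tilde\iota)\ \ge\ \Bigl|\int_{T^2}(P\circ\tilde\iota)^*dA\Bigr|\ =\ 2\pi b\,l_\sigma(\gamma).
\]
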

	
	In order to study the area growth  in $\mathcal S^*(k)$ when $k>1$, we need the stable norm $||\cdot||_\sigma$ on $H_1(\Sigma,\R)$, which was introduced by Federer \cite{federer}. It has the property that if $\gamma\in H_1(\Sigma,\Z)$,  then $||\gamma||_\sigma$ is the length of the least-length  $1$-cycle in  $\gamma$. We use $B_1(\sigma)\subset H_1(\Sigma,\R)$ to denote the unit ball for the stable norm.
	It is standard to identify $H_1(\Sigma,\Z)$ with the integer lattice in $H_1(\Sigma,\R)$ obtained from the change-of-coefficients map. We use $\mu$ to denote the Lebesgue measure on $H_1(\Sigma,\R)$, normalized so that the torus $H_1(\Sigma,\R)/H_1(\Sigma,\Z)$ has volume $1$.
	
	If $k-1=d(g-1)$ we denote by $C_g(k)$ the set of equivalence classes of degree $d$ covers of $\Sigma$.  {Two covers $S \rightarrow \Sigma$ are equivalent if they are homotopic, up to precomposing with an element of the mapping class group of $S$.}  Given a finite cover $p$ of $\Sigma$, we denote by $|D(p)|$ the cardinality of the group of deck transformations of $p$.
	\begin{thm}\label{codimension.one.thm} With $\sigma$ a metric on $\Sigma$ we have
		
		$$
		\lim_{A\to\infty}\frac{\#\{\Pi\in \mathcal S^*(g):\text{area}_{\sigma\oplus d\theta^2}(\Pi)\leq A\}}{A^{2g}}=\frac{\mu(B_1(\sigma))}{(2\pi)^{2g}}.$$

		If $k-1\geq 1$ is not a positive integral multiple of $g-1$ then $\mathcal S^*(k)=\emptyset$. 
		
		If   $k-1$ is  a positive multiple of $g-1$ then
		\begin{equation*}
			\lim_{A\to\infty}\frac{\#\{\Pi\in \mathcal S^*(k):\text{area}_{\sigma\oplus d\theta^2}(\Pi)\leq A\}}{A^{2k}}=\sum_{p\in C_g(k)}\frac{\mu(B_1(p^*\sigma))}{|D(p)|(2\pi)^{2k}}.
		\end{equation*}
	\end{thm}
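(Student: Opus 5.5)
The plan is to reduce the count to counting lattice points of $H^1$ of finite covers of $\Sigma$ inside dilates of stable-norm balls. First I would classify $\mathcal S^*(k)$ for $k\geq 2$ using $\pi_1(M)=\pi_1(\Sigma)\times\Z$ with $\Z$ central. Let $H$ be a genus-$k$ surface subgroup.

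\emph{Classification.} The intersection $H\cap(\{1\}\times\Z)$ is a normal cyclic subgroup of a closed hyperbolic surface group, so it is trivial. Hence the projection $\pi_1(M)\to\pi_1(\Sigma)$ is injective on $H$. Its image is a surface subgroup of $\pi_1(\Sigma)$. Infinite-index subgroups of $\pi_1(\Sigma)$ are free, so the image has finite index $d$, and Euler characteristic gives $k-1=d(g-1)$. This proves the emptiness statement.

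\emph{Parametrization.} Conversely, such an $H$ is exactly the graph of a homomorphism $\varphi:\pi_1(S)\to\Z$, where $p:S\to\Sigma$ is the degree-$d$ cover attached to the image. So $H$ determines a pair $(p,\varphi)$ with $\varphi\in H^1(S,\Z)\cong\Z^{2k}$.

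\emph{Conjugacy.} Conjugating by $(h,n)$ conjugates the image by $h$ and leaves $\varphi$ unchanged, because $\Z$ is central. Thus conjugacy classes correspond to pairs $(p,\varphi)$ modulo the equivalence of covers in $C_g(k)$. The residual ambiguity is precisely the action of the deck group $D(p)$ on $H^1(S,\Z)$. For $k=g$ the cover is the identity, $D(p)$ is trivial, and $\mathcal S^*(g)\cong H^1(\Sigma,\Z)\cong H_1(\Sigma,\Z)$ via Poincar\'e duality.

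\emph{Area asymptotics.} Next I would show
\[\text{area}(\Pi)=2\pi\|PD(\varphi)\|_{p^*\sigma}+O(1),\]
where the $O(1)$ is uniform in $\varphi$ (it is at most $\text{area}(p^*\sigma)$).
\begin{itemize}
\item \emph{Lower bound.} Any $(S,\iota)\in\Pi$ is homotopic to a map $(p,f)$ with $f:S\to S^1$ in the class $\varphi$. By the coarea formula, $\text{area}\geq\int_{S^1}\text{length}(f^{-1}(\theta))\,d\theta$. Almost every level set is a $1$-cycle representing $PD(\varphi)$, and its length is at least the length of its projection in $p^*\sigma$. So it has length at least $\|PD(\varphi)\|_{p^*\sigma}$, giving $\text{area}\geq 2\pi\|PD(\varphi)\|_{p^*\sigma}$.
\item \emph{Upper bound.} Take a near-minimizing integral multicurve in $PD(\varphi)$. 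Build $f$ with $|df|$ concentrated in a thin collar around it, with total $\int|df|\leq 2\pi(\|PD\varphi\|+\epsilon)$. The graph of $(p,f)$ then has area $\int\sqrt{1+|df|^2}\leq \text{area}(p^*\sigma)+\int|df|$.
\end{itemize}
Since $\|\cdot\|$ grows linearly, an additive $O(1)$ error does not affect the leading term.

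\emph{Counting.} The count becomes $\sum_{p\in C_g(k)}\#\{\varphi\in H^1(S,\Z):2\pi\|\varphi\|+O(1)\leq A\}/D(p)$. By the standard lattice-point asymptotic for the dilate $(A/2\pi)B_1(p^*\sigma)$ of a convex body, the number of lattice points is $\mu(B_1(p^*\sigma))(A/2\pi)^{2k}(1+o(1))$. For the quotient by $D(p)$ I would argue as follows. Each nontrivial deck transformation acts on $H^1(S,\R)$ with a proper fixed subspace, since it acts nontrivially on cohomology. So lattice points with nontrivial stabilizer number $O(A^{2k-1})$. Orbits of the remaining points have size exactly $|D(p)|$. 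Summing over the finitely many $p\in C_g(k)$ gives the formula, and $k=g$ is the special case with a single trivial cover.

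The step I expect to be delicate is the upper bound on area, i.e.\ realizing the stable norm by a map $f$ up to uniformly bounded error. I would handle it by picking a minimizing multicurve from the integral stable-norm theory (Federer) and smoothing $f$ along it. A secondary worry is making sure the identification of conjugacy classes with $D(p)$-orbits matches the paper's equivalence on $C_g(k)$ exactly, with no extra orientation factor.
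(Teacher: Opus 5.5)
Your proposal is correct and follows essentially the same route as the paper. It has the same classification (a cyclic normal subgroup must be trivial, then the free-or-finite-index dichotomy) and the same two-sided estimate $2\pi\|PD(\varphi)\|_{p^*\sigma}\le \text{area}\le 2\pi\|PD(\varphi)\|_{p^*\sigma}+\text{area}(p^*\sigma)$, obtained from the coarea formula and a collar construction around a minimizing multicurve. The counting is also the same lattice-point argument with the non-free $D(p)$-orbits discarded; your free-orbit count is Burnside's lemma in another form, and your graph-of-a-homomorphism description of conjugacy classes replaces the paper's Dehn--Nielsen--Baer argument.

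Two details each need a line of justification. In the lower bound, lift $\iota_1\simeq p$ through $p$ to $S$, as the paper does with the local isometry $F_p:S\times S^1\to\Sigma\times S^1$; otherwise the level sets only bound the area below by $\|p_*PD(\varphi)\|_\sigma$, which can be smaller. The claim that a nontrivial deck transformation $T$ acts nontrivially on $H^1(S,\R)$ also needs a reason. For example, $T$ is fixed-point free, so its Lefschetz number vanishes, whereas acting trivially on cohomology would give $2-2k\neq 0$. The paper instead uses the transfer to get $\dim\ker(T_*-\mathrm{Id})=2r<2k$.
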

	
\subsection{{Lagrangian surface groups in $\Sigma\times \Sigma$}}
	Fix  a product symplectic form $$\bar \omega=\bar \omega_1-\bar\omega_2$$  on $\Sigma\times \Sigma$, where each $\bar \omega_i$ is the pullback of an area form on the  $i$-th factor with area $4\pi(g-1)$.
	
	Let $\mathcal L^*(k)$ denote the set of all Lagrangian surface subgroups of $\pi_1(\Sigma\times\Sigma)$ with genus $k$ modulo the equivalence relation of conjugacy. We abuse notation and  {regard elements} $\Pi\in \mathcal L^*(k)$ as  $\pi_1$-injective immersions
	\begin{equation}\label{homopty.classes.lagrangian}
		\iota:S\rightarrow \Sigma\times \Sigma \quad\text{with }\int_S\iota^*\bar\omega=0\quad \text{and}\quad \iota_*(\pi_1(S))\in\Pi,
	\end{equation}
	where $S$ is a genus $k$ surface. Given a metric $h$ on $\Sigma\times \Sigma$, consider
	\begin{equation}\label{area.minimizing.lagrangian}
		\text{area}_{h}(\Pi)=\inf\{\text{area}_{\iota^*(h)}(S):(S,\iota)\in \Pi\}.
	\end{equation}
 If $h$ is a nonpositively curved metric on $\Sigma\times \Sigma$ then $h$ is isometric to $\sigma\oplus\rho$ \cite[Theorem 2]{gromoll-wolf}, where $\sigma$ and $\rho$ are nonpositively curved. We study the growth of  $\text{area}_{h}(\Pi)$  following similar principles to the ones used in the previous case and in the case of simple closed geodesics in $\Sigma$. 
 
 Let $\text{sys}(\tau)$ and $\delta(\tau)$ denote, respectively, the systole  and the topological entropy of the metric $\tau$ on $\Sigma$.
	\begin{thm}\label{lagrangian.tori2}
		Consider a nonpositively curved metric $h=\sigma\oplus\rho$ on $\Sigma\times \Sigma$. We have
		{$$\lim_{A\to\infty}\frac{\ln(\#\{\Pi\in \mathcal L^*(1):\text{area}_h(\Pi)\leq A\})}{A}=\max\left\{\frac{\delta(\rho)}{\text{sys}(\sigma)},\frac{\delta(\sigma)}{\text{sys}(\rho)}\right\}.$$}
	\end{thm}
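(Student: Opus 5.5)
Throughout, $\mathcal L^*(1)$ means conjugacy classes of Lagrangian $\Z^2$ subgroups of $\pi_1(\Sigma)\times\pi_1(\Sigma)$, and the plan is to describe these classes explicitly and then count them with the prime geodesic theorem.

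\emph{Algebraic description.} A $\Z^2$ subgroup of $\pi_1(\Sigma)\times\pi_1(\Sigma)$ projects to an abelian, hence cyclic (or trivial), subgroup of each factor. Injectivity rules out both projections being trivial, and the two projections cannot be proportional, or else the subgroup would be cyclic. So, up to finite index, there are three cases. In the first, the subgroup is a finite-index subgroup of $\langle a\rangle\times\langle b\rangle$ with $a,b$ primitive. In the second, it contains $(a^m,1)$ and $(a^i,b^n)$. The third case is the one with a trivial factor, which injectivity excludes.

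\emph{The Lagrangian condition.} The image torus is homotopic into $\gamma_a\times\gamma_b$, where $\gamma_a,\gamma_b$ are closed geodesics. The pullback of $\bar\omega_1-\bar\omega_2$ to such a torus is exact on the torus, since each $\bar\omega_i$ pulls back through a map to a circle. So every such torus is automatically Lagrangian.

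\emph{Area.} For $h=\sigma\oplus\rho$, the least area in the class of the finite-index sublattice $\Lambda\subset\langle a\rangle\times\langle b\rangle$ equals $[\langle a\rangle\times\langle b\rangle:\Lambda]\,\ell_\sigma(a)\,\ell_\rho(b)$, where the lengths are those of the unique closed geodesics. This uses the standard calibration/flat-torus argument: for nonpositive curvature the minimizer is a flat torus covering $\gamma_a\times\gamma_b$ (Schoen--Yau together with the flat-strip theorem). If $\gamma_a$ is non-unique in its free homotopy class (a flat cylinder), the area is still $\ell\cdot\ell$, and flat cylinders contribute only finitely many primitive classes, so this does not affect the growth rate. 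Conjugacy classes of $\Lambda$ correspond to pairs of primitive conjugacy classes $([a],[b])$ together with a sublattice of $\Z^2$ of index $N$. The number of sublattices of index $N$ is $\sigma_1(N)=O(N^{1+\epsilon})$.

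\emph{Counting.} We need to count
\[
\#\{([a],[b],\Lambda): N\,\ell_\sigma(a)\,\ell_\rho(b)\le A\}.
\]

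For the \emph{lower bound}, take $a$ to be a systole of $\sigma$, $N=1$, and $b$ ranging over primitive classes with $\ell_\rho(b)\le A/\mathrm{sys}(\sigma)$. By Margulis' prime geodesic asymptotics for nonpositively curved surfaces (valid with entropy $\delta$ in rank one; I would cite Knieper), this gives $e^{(\delta(\rho)/\mathrm{sys}(\sigma)-o(1))A}$ classes. The symmetric choice gives the other term of the maximum.

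For the \emph{upper bound}, write the count as
\[
\sum_N \sigma_1(N)\sum_{[a]} \#\{[b]:\ell_\rho(b)\le A/(N\ell_\sigma(a))\}.
\]
Use $\#\{[b]:\ell_\rho(b)\le L\}\le Ce^{(\delta(\rho)+\epsilon)L}$. Split according to whether $\ell_\sigma(a)\le\ell_\rho(b)$ or the reverse. In the first case, $\ell_\sigma(a)\le\sqrt{A}$, and the $[a]$ sum contributes at most $e^{C\sqrt A}$, which is subexponential in $A$. Each term has exponent at most $(\delta(\rho)+\epsilon)A/\mathrm{sys}(\sigma)$ because $N\ell_\sigma(a)\ge\mathrm{sys}(\sigma)$. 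The $N$-sum runs over $N\le A/\mathrm{sys}^2$, so it is polynomial. The second case is symmetric.

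\emph{Main obstacle.} The hardest step is the rigorous identification of the area-minimizing Lagrangian torus class with $\gamma_a\times\gamma_b$ in nonpositive (not strictly negative) curvature. This includes classes with non-unique geodesics and the possibility that the infimum in \eqref{area.minimizing.lagrangian} is taken over Lagrangian immersions only. I would handle the latter by noting that the lower bound $\mathrm{area}\ge N\ell\ell$ holds for \emph{all} immersions in the class, via the coarea formula applied to the two projections onto the circles $\gamma_a$ and $\gamma_b$ after lifting to the $\Z^2$-cover. The flat product then attains this bound and is Lagrangian.
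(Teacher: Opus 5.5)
Your argument is correct, and the counting is the same as the paper's. The lower bound takes a $\sigma$-systole times every $\beta$ with $l_\rho(\beta)\le A/\text{sys}(\sigma)$ and invokes Knieper. The upper bound splits according to which of the two lengths is at most $\sqrt A$, with a polynomial factor from the covering degree. The paper bounds the number of index-$d$ sublattices by $d^2$ where you use $\sigma_1(N)$; either works.

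You differ from the paper in how you prove the structure lemma $\text{area}_h(\Pi)=d\,l_\sigma(\alpha)l_\rho(\beta)$. The paper takes a Schoen--Yau area minimizer in the class. Its two components are harmonic maps from a flat torus into nonpositively curved surfaces, so Eells--Sampson makes it totally geodesic, hence a degree-$d$ cover of a product of closed geodesics. You obtain the description of the class algebraically: abelian subgroups of $\pi_1(\Sigma)$ are cyclic, so a $\Z^2$ subgroup has finite index in $\langle a\rangle\times\langle b\rangle$ with $a,b$ primitive. You then get the area from the explicit product torus (upper bound), and from the $1$-Lipschitz equivariant projections of the annular covers onto $\gamma_a,\gamma_b$ combined with a degree count (lower bound, valid for every immersion in the class). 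Your route avoids existence and regularity theory for minimizers and handles flat cylinders with no extra work. The paper's route also identifies the minimizer itself, which it reuses elsewhere.

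Two small cleanups are worth making.
\begin{itemize}
\item Your case analysis really has only one case. The ``second case'' is just a normal form of a finite-index sublattice of $\langle a\rangle\times\langle b\rangle$. ``Proportional projections'' has no meaning, since the projections land in different factors.
\item The claim that flat cylinders contribute only finitely many classes is unnecessary, and you should drop it rather than rely on it. The area formula and the count involve only the infimal length $l_\sigma([a])$ of a free homotopy class, and Knieper's asymptotics count free homotopy classes.
\end{itemize}
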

		{We use $\mathcal M_{\leq 0}(\Sigma)$ and  $\mathcal M(\Sigma)$ to denote, respectively,  the set of all smooth nonpositively curved metrics  on $\Sigma$ (modulo pull-back by diffeomorphisms homotopic to the identity) and the moduli space of hyperbolic metrics on $\Sigma$.}
		
		 In order to study the area growth in $\mathcal L^*(k)$ we need the space of measured geodesic laminations $\mathcal{ML}(\Sigma)$. Informally, $\mathcal{ML}(\Sigma)$ can be seen as a finer analogue of $H_1(\Sigma,\R)$: after we fix a metric $\tau\in \mathcal M_{\leq 0}(\Sigma)$, elements in $\mathcal{ML}(\Sigma)$ minimize length in their homotopy class while elements in $H_1(\Sigma,\R)$ minimize length in their homology class. With $\lambda_\tau$ being the Liouville current of $\tau$, we consider the function (see Section \ref{currents.section})
	$$\lambda\in \mathcal{ML}(\Sigma)\mapsto l_\tau(\lambda)=i(\lambda_\tau,\lambda).$$
	Keeping with the analogy to $H_1(\Sigma,\R)$, $l_\tau(\lambda)$ can be seen as the stable norm for the metric $\tau$ on $\mathcal{ML}(\Sigma)$.
	
	Thurston introduced a locally finite measure $m_{Th}$ on the space of measured geodesic laminations $\mathcal{ML}(\Sigma)$. It is invariant under the action by the mapping class group. The \emph{Mirzakhani function} is defined as
	$$B:{\mathcal M_{\leq 0}(\Sigma)}\rightarrow (0,\infty),\quad B(\tau)=m_{Th}(\{\lambda\in \mathcal{ML}(\Sigma):l_\tau(\lambda)\leq 1\}).$$
	Mirzakhani also considered the average of $\tau\mapsto B(\tau)$ with respect to the Weil--Petersson volume form:
	$$b_g=\int_{\mathcal M(\Sigma)}B(\tau)\,dV_{WP}(\tau).$$
	The constant $b_g$ can be explicitly computed \cite{delecroix}. With $\sigma$ being a  negatively curved metric on $\Sigma$, let $\mathcal S$ be the set of simple closed geodesics in $\Sigma$. Mirzakhani showed (see Remark on \cite[p. 101]{mirza2})
	$$\lim_{L\to\infty}\frac{\#\{\gamma\in \mathcal S:l_\sigma(\gamma)\leq L\}}{L^{6(g-1)}}=n_g\frac{B(\sigma)}{b_g},$$
	where $n_g\in \mathbb{Q}$ depends only on $g$ and can be computed in many cases (\cite[Section 11]{souto-book} and references therein).
	{\begin{thm}\label{main.thm.variable}
		Consider a nonpositively curved metric $h=\sigma\oplus\rho$ on $\Sigma\times \Sigma$. We have
		$$\lim_{A\to\infty}\frac{\#\{\Pi\in \mathcal L^*(g):\text{area}_h(\Pi)\leq A\}}{A^{6(g-1)}}=\frac{B(\sigma)B(\rho)}{b_g}.$$
		
		If $k-1\geq 1$ is not a positive integral multiple of $g-1$ then $\mathcal L^*(k)=\emptyset$.
		
		If $k-1$ is a positive multiple of $g-1$ then
		$$\lim_{A\to\infty}\frac{\#\{\Pi\in \mathcal L^*(k):\text{area}_h(\Pi)\leq A\}}{A^{6(k-1)}}=\sum_{p_1,p_2\in C_g(k)}\frac{B(p_1^*\sigma)B(p_2^*\rho)}{|D(p_1)||D(p_2)|b_k}.$$
	\end{thm}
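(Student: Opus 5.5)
The plan is to reduce the count of Lagrangian surface groups to a count of pairs (marked hyperbolic structure data, pairs of covers) and then use Mirzakhani's equidistribution of mapping class group orbits in $\mathcal{ML}$.

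\emph{Step 1: topology.} A $\pi_1$-injective immersion $\iota:S\to\Sigma\times\Sigma$ gives $\iota=(f_1,f_2)$. The Lagrangian condition $\int_S\iota^*\bar\omega=0$ forces $\deg f_1=\deg f_2$. Since a surface subgroup of a product of surface groups projects injectively to at least one factor (and in fact, for genus $k\ge 2$, the kernel of each projection is a normal subgroup of a surface group, hence trivial or of infinite index with free image, which is incompatible with a nonzero degree), the degree is nonzero. Hence $\deg f_1=\deg f_2=d\neq 0$. By Kneser and injectivity, each $f_i$ is homotopic to a degree-$d$ cover, so $k-1=d(g-1)$; this gives the emptiness statement. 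Thus $\Pi\in\mathcal L^*(k)$ corresponds to a pair of covers $p_1,p_2:S\to\Sigma$, i.e.\ to pairs $(p_1,p_2\circ\phi)$ with $\phi$ in the mapping class group $\mathrm{Mod}(S)$. Modulo the simultaneous action of $\mathrm{Mod}(S)$ and the deck groups, we obtain a parametrization
\[
\mathcal L^*(k)\;\longleftrightarrow\;\bigsqcup_{p_1,p_2\in C_g(k)} \big(D(p_1)\backslash \mathrm{Mod}(S)/D(p_2)\big).
\]
In the cover-free case $k=g$, this is just $\mathrm{Mod}(\Sigma)$ up to conjugacy bookkeeping. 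Over $\mathrm{Mod}(S)$ the stabilizers are generically trivial, which is where the factors $1/(|D(p_1)||D(p_2)|)$ come from.

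\emph{Step 2: area of the minimizer.} We need an asymptotic formula for $\mathrm{area}_h$ of the class determined by $\phi$. We will show
\[
\mathrm{area}_h(\phi)=\mathrm{area}(p_1^*\sigma)+\mathrm{area}(p_2^*\rho)+\text{(energy-type term)},
\]
or, more usefully, that the minimal area is comparable, up to $o(\cdot)$ as $\phi\to\infty$, to a quantity like $i(\lambda_{p_1^*\sigma},\phi_*\lambda_{p_2^*\rho})$ in Liouville currents. This is the analogue of "the length of $\phi(\gamma)$ equals $i(\lambda_\sigma,\phi\gamma)$". The natural attempt is as follows. The area of the graph of a map $u:(S,p_1^*\sigma)\to(S,p_2^*\rho)$ is $\int\sqrt{(1+\lambda_1^2)(1+\lambda_2^2)}\ge\int(\lambda_1+\lambda_2)$, and for large $\phi$ this is dominated by the total stretching. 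We would then prove
\[
\mathrm{area}_h(\Pi_\phi)\sim \tfrac{\pi}{2}\,\text{-normalized } i(\lambda_{\sigma'},\phi_*\lambda_{\rho'}),
\]
with upper bounds from explicit near-geodesic-lamination maps and lower bounds from a calibration or current-intersection inequality. This is the main obstacle: identifying the exact asymptotic constant and the uniformity needed, including handling degenerations of $\phi$ in moduli space. The normalization of $\bar\omega_i$ to area $4\pi(g-1)$ is presumably chosen to make the constant match.

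\emph{Step 3: counting.} With the asymptotic $\mathrm{area}(\phi)\approx F(\phi_*\lambda_{\rho'})$, where $F(\cdot)=i(\lambda_{\sigma'},\cdot)$ is homogeneous, continuous and positive on currents, we invoke Mirzakhani's (and Erlandsson--Souto / Rafi--Souto) theorem on counting $\mathrm{Mod}$-orbits of filling currents:
\[
\#\{\phi\in\mathrm{Mod}(S): i(\lambda_1,\phi\lambda_2)\le A\}\sim \frac{B(\lambda_1)B(\lambda_2)}{b_k}A^{6(k-1)}.
\]
Here $B$ is extended to currents via $m_{Th}(\{\lambda: i(\lambda_\tau,\lambda)\le1\})$, which is valid for nonpositively curved metrics. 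Summing over pairs of covers with the stabilizer weights $1/(|D(p_1)||D(p_2)|)$ gives the formula. The $o(A)$ error in Step 2 does not affect the leading term, because the count is polynomial and the limit shape is continuous. For $k=g$, only the identity cover occurs, which gives $B(\sigma)B(\rho)/b_g$.
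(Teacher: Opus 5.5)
Your skeleton matches the paper's: the double-coset parametrization $D(p_1)\backslash\text{Map}(S)/D(p_2)$, comparison of area with $i(\lambda_{p_1^*\sigma},\phi(\lambda_{p_2^*\rho}))$, Rafi--Souto orbit counting, and deck-group weights. But the load-bearing step is missing. Step 2 is announced as ``the main obstacle'' and never proved, and the tools you suggest do not deliver it. You need a two-sided comparison with sharp constant $1$ and error uniform along the $\text{Map}$-orbit. The paper proves, for all $\sigma,\rho\in\mathcal M_{\leq 0}(\Sigma)$,
\[
i(\lambda_\sigma,\lambda_\rho)\le A(\sigma,\rho)\le i(\lambda_\sigma,\lambda_\rho)+\area_\sigma(\Sigma)+\area_\rho(\Sigma).
\]
The additive error is automatically uniform because areas are $\text{Map}$-invariant, so no analysis of degenerations is needed.

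For the lower bound, $\sqrt{(1+\lambda_1^2)(1+\lambda_2^2)}\ge\lambda_1+\lambda_2$ is only the first step. One writes the minimizer as the graph of a diffeomorphism $\phi\simeq\mathrm{Id}$ (its components are harmonic diffeomorphisms). Then $\int_0^{2\pi}(\lambda_1^2\cos^2\theta+\lambda_2^2\sin^2\theta)^{1/2}d\theta\le 4(\lambda_1+\lambda_2)$ gives $A\ge\frac14\int_{T^1\Sigma}|d\phi(v)|_\rho\,dV_\sigma$. Finally the Croke--Fathi inequality $\int_{T^1\Sigma}|d\phi(v)|_\rho\,dV_\sigma\ge 4\,i(\lambda_\sigma,\lambda_\rho)$ finishes it.

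For the upper bound, ``explicit near-geodesic-lamination maps'' give no mechanism for the constant $1$ for a general mapping class. The paper instead analyzes the actual minimizer:
\begin{itemize}
\item conformality forces the Hopf differentials of the two harmonic components to satisfy $\Phi(\sigma)=-\Phi(\rho)=-\Psi$;
\item Minsky's bound $E\le 2\|\Phi\|+\area$ gives $A\le 4\|\Psi\|+\area_\sigma(\Sigma)+\area_\rho(\Sigma)$;
\item writing $\|\Psi\|=i(\mu_V(-\Psi),\mu_V(\Psi))$ and applying Wolf's inequality $2i(\mu_V(\Phi(\rho)),\nu)\le i(\lambda_\rho,\nu)$ once for each factor gives $4\|\Psi\|\le i(\lambda_\sigma,\lambda_\rho)$.
\end{itemize}
Without a sharp bound of this type, Rafi--Souto yields only $\#\asymp A^{6(g-1)}$, not the limit. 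Also, the normalization of $\bar\omega_i$ plays no role in the constant; $\bar\omega$ only enters through the equal-degree condition. With Bonahon's normalization $i(\lambda_\rho,\delta_\gamma)=l_\rho(\gamma)$ there is no $\pi/2$.

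Second, ``stabilizers are generically trivial'' must be made quantitative. In the Burnside formula $\#N_{p_1,p_2}(A)=\frac{1}{|D(p_1)||D(p_2)|}\sum_{(T_1,T_2)}\#\{\phi:T_1\phi T_2=\phi,\ \area\le A\}$, you must show every non-identity term is $o(A^{6(k-1)})$. The paper observes that such a $\phi$ conjugates $\langle T_2\rangle$ onto $\langle T_1\rangle$, so it descends to the quotient $W_2=S/\langle T_2\rangle$ of genus $r<k$. By uniqueness of least-area immersions the descended class has exactly $1/m$ of the area, and each class on $W_2$ has at most $m$ lifts, so the term is $O(A^{6(r-1)})$.

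Finally, your parenthetical justification in Step 1 is muddled and partly circular: it invokes ``a nonzero degree'' while trying to prove the degree is nonzero. The clean argument: suppose both projections had nontrivial kernel. Two commuting kernel elements would share a nontrivial common power, and two non-commuting ones would have a nontrivial commutator. Either element lies in $\ker\iota_*$, contradicting $\pi_1$-injectivity.
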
}

We {now} further assume that the metric $h$ is a  K\"{a}hler--Einstein metric normalized so that
	$$
	\operatorname{Ric}(h)=-h.
	$$
	Up to diffeomorphism\footnote{By \cite[Theorem 3.4]{catanese}, the underlying complex surface is biholomorphic to a product of two genus-\(g\) curves. The conclusion then follows from Aubin--Yau uniqueness.}, we may write
	$
	h=\sigma\oplus\rho,
	$
	where $\sigma$ and $\rho$ are hyperbolic metrics on $\Sigma$. Consider the symplectic form
	\begin{equation}\label{symplectic.form}
		\omega=\omega_\sigma-\omega_\rho.
	\end{equation}
	This form is cohomologous to $\bar\omega$. A branched immersion into $\Sigma\times\Sigma$ is called \emph{Lagrangian} if $\omega$ vanishes pointwise along the image.
	
	 Consider $\mathcal L(k)$ to be the set of all $\pi_1$-injective minimal Lagrangian immersions of a surface of genus $k\geq 1$ into $\Sigma\times \Sigma$. It essentially follows from \cite{lee} (see Section \ref{top.argument} for details) that $\mathcal L(k)$ is in one-to-one correspondence with $\mathcal L^*(k)$ and that each $L\in\mathcal L(k)$ uniquely determines $[L]\in \mathcal L^*(k)$ so that 
	 $$\text{area}_h([L])=\text{area}_h(L).$$
	{Thus we obtain the following corollary of Theorem \ref{main.thm.variable}.

	\begin{cor}[Minimal Lagrangian Counting]\label{main.thm}
		Consider a normalized K\"{a}hler--Einstein metric $h=\sigma\oplus\rho$ on $\Sigma\times \Sigma$. We have
		$$\lim_{A\to\infty}\frac{\#\{L\in \mathcal L(g):\text{area}_h(L)\leq A\}}{A^{6(g-1)}}=\frac{B(\sigma)B(\rho)}{b_g}.$$
		
		If $k-1\geq 1$ is not a positive integral multiple of $g-1$ then $\mathcal L(k)=\emptyset$.
		
		If $k-1$ is a positive multiple of $g-1$ then
		$$\lim_{A\to\infty}\frac{\#\{L\in \mathcal L(k):\text{area}_h(L)\leq A\}}{A^{6(k-1)}}=\sum_{p_1,p_2\in C_g(k)}\frac{B(p_1^*\sigma)B(p_2^*\rho)}{|D(p_1)||D(p_2)|b_k}.$$
	\end{cor}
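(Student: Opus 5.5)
The corollary follows from Theorem \ref{main.thm.variable} once we have a bijection $\Phi:\mathcal L(k)\to\mathcal L^*(k)$, $L\mapsto[L]$, with $\text{area}_h([L])=\text{area}_h(L)$. Under such a bijection the sets $\{L\in\mathcal L(k):\text{area}_h(L)\le A\}$ and $\{\Pi\in\mathcal L^*(k):\text{area}_h(\Pi)\le A\}$ have the same cardinality for every $A$. The limits and the emptiness statement then transfer verbatim. A Kähler--Einstein $h=\sigma\oplus\rho$ with $\sigma,\rho$ hyperbolic is in particular nonpositively curved, so Theorem \ref{main.thm.variable} applies. Since $\omega$ is cohomologous to $\bar\omega$, the topological Lagrangian condition $\int_S\iota^*\bar\omega=0$ in (\ref{homopty.classes.lagrangian}) agrees with the one for $\omega$. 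The work is therefore in constructing $\Phi$, following \cite{lee} as indicated before the statement.

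\emph{Well-definedness and area identity.} Let $L:S\to\Sigma\times\Sigma$ be a $\pi_1$-injective minimal Lagrangian immersion. Since $\omega$ vanishes pointwise on $L$, we have $\int_S L^*\bar\omega=0$, so $[L]:=$ the conjugacy class of $L_*\pi_1(S)$ lies in $\mathcal L^*(k)$. For the area, use calibration: in a Kähler--Einstein product with $\omega=\omega_\sigma-\omega_\rho$, a Lagrangian is minimal iff it is special Lagrangian for a suitable phase. Concretely, with $\Omega$ the holomorphic volume form twisted by the flat/Einstein structure, I would instead use the standard identity $\text{area}(\iota)\ge\int_S\iota^*(\omega_\sigma+\omega_\rho)$-type bounds together with Schoen's description. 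Write $L=(f_1,f_2)$ with $f_i$ harmonic maps from $(S,[L^*h])$. Then minimal Lagrangian means $f_2\circ f_1^{-1}$-type structure: the Hopf differentials satisfy $\Phi(f_1)+\Phi(f_2)=0$, and the area equals $E(f_1)+E(f_2)$ computed at that conformal structure. Any competitor $\iota=(g_1,g_2)$ in the same class satisfies
\[
\text{area}(\iota)\ \ge\ \inf_{X}\big(E_X(g_1)+E_X(g_2)\big)\ \ge\ \inf_X\big(E(u_1^X)+E(u_2^X)\big),
\]
where $u_i^X$ is the harmonic map in the homotopy class of $g_i$ (which exists and is unique by Eells--Sampson/Hartman, the classes being nontrivial since $\pi_1$-injective). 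The function $X\mapsto E(u_1^X)+E(u_2^X)$ on Teichmüller space is proper and strictly convex along Weil--Petersson geodesics (Wolf, Tromba). Properness uses that the projections to each factor are nonelementary. If one factor projection had cyclic image, the Lagrangian and genus conditions would force a contradiction; that case appears only for $k=1$. Its unique critical point is exactly where the Hopf differentials cancel, i.e.\ the minimal Lagrangian. Hence $\text{area}_h([L])=\text{area}_h(L)$.

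\emph{Bijectivity.} Surjectivity: given $\Pi\in\mathcal L^*(k)$, minimize $E(u_1^X)+E(u_2^X)$ over Teichmüller space. At the minimum the map $(u_1,u_2)$ is a conformal, hence branched minimal, immersion. Its Jacobians satisfy $J(u_1)=J(u_2)$ pointwise because of the Lagrangian topological condition and Schoen/Lee's maximum principle argument, which gives the pointwise Lagrangian condition. We then need unbranched and $\pi_1$-injective. $\pi_1$-injectivity is automatic, since the induced map on $\pi_1$ is the given injective one. Injectivity follows from uniqueness of the critical point: two minimal Lagrangians in the same class would both be critical points of the convex functional, and hence equal up to reparametrization.

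The main obstacle is ruling out branch points, i.e.\ showing the minimizer is an honest immersion. This is where Lee's extension of Schoen's argument enters. My first attempt would be to write $|\partial u_1|^2=|\partial u_2|^2$ and $|\bar\partial u_1|=|\bar\partial u_2|$ from the Lagrangian and conformal conditions. I would then use the Bochner formulas $\Delta\log|\partial u_i|=-K\,J(u_i)+\cdots$ with curvature $-1$ on both targets to show $\log(|\partial u_1|^2+|\bar\partial u_1|^2)$ cannot have isolated zeros. Failing that, I would cite \cite{lee} directly for immersion in the $\pi_1$-injective setting.
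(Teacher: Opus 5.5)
Your reduction matches the paper's: the corollary is Theorem \ref{main.thm.variable} transported through an area-preserving bijection $\mathcal L(k)\to\mathcal L^*(k)$. Your route to that bijection is different but viable. You use properness and convexity of $X\mapsto E(u_1^X)+E(u_2^X)$ on Teichm\"uller space, whose critical points are exactly where the Hopf differentials cancel. The paper instead takes the Schoen--Yau area minimizer, gets uniqueness from \cite{markovic}, and gets the Lagrangian property from \cite{lee}. For your route you need a genuine source for \emph{strict} Weil--Petersson convexity, since plurisubharmonicity alone would not give a unique critical point; alternatively, replace that step by \cite{markovic}. Also delete the calibration sentence. $h$ has negative Ricci curvature, so there is no parallel holomorphic volume form, and these minimal Lagrangians are not special Lagrangian. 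The paper contrasts exactly this with Filip's K3 setting.

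The genuine gap is the step you flag yourself: showing the minimizer $(u_1,u_2)$ has no branch points. Neither of your suggestions closes it. The proposed Bochner computation for $\log(|\partial u_1|^2+|\bar\partial u_1|^2)$ is not an argument as stated. Citing \cite{lee} ``for immersion'' is also not what the paper does; there \cite{lee} supplies only the pointwise Lagrangian condition. The missing idea is that each $u_i$ is a harmonic map into a nonpositively curved surface homotopic to a degree-$d$ covering (Lemma \ref{top.argument2}). By Schoen--Yau \cite{schoen-yau-univalent} it is therefore itself a covering map. So $du_1$ is an isomorphism at every point, $d(u_1,u_2)$ is injective, and the minimizer is an honest immersion, equal to $(p_1\circ\psi,p_2)$ after reparametrization. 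If you want to stay with your computation, look at $\partial u_1$ alone. It is a section of $K_S\otimes u_1^*T^{1,0}\Sigma$, not identically zero since $\deg u_1=d>0$, with only isolated zeros of positive order. That bundle has degree $(2k-2)-d(2g-2)=0$, so $\partial u_1$ never vanishes. Hence $du_1\neq 0$ everywhere, and a conformal map with nowhere-vanishing differential is unbranched.
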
}
	Filip \cite{filip} counted the number of special Lagrangian tori on a generic $K3$-surface. Special Lagrangians are calibrated, which means their area is a homological invariant. In our setting it is possible to have homology classes which contain embedded minimal Lagrangians but for which the area minimizing representative is not Lagrangian. 

\subsection{Rigidity of area spectrum} 
Given a metric $h$ on $\Sigma\times \Sigma$, the genus $g$ Lagrangian area spectrum of $\Sigma\times \Sigma$ is the map
	$$A(h):\mathcal L^*(g)\rightarrow (0,\infty),\quad \Pi\mapsto\text{area}_{h}(\Pi).$$
	We note that while $A(\sigma\oplus\rho)$ and $A(\rho\oplus\sigma)$ have the same image, they will be different maps in general. 
	\begin{thm}[Area rigidity]\label{rigidity.thm} Consider pairs of nonpositively curved metrics $(\sigma,\rho)$ and $(\hat\sigma,\hat \rho)$ so that
		$$A(\sigma\oplus\rho)=A(\hat\sigma\oplus\hat\rho).$$
		Then there is $c_0>0$ so that $\sigma$ and $c_0\rho$ have the same simple length spectrum as $c_0\hat\sigma$ and $\hat\rho$, respectively.
	\end{thm}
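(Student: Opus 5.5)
Rigidity should follow from an explicit formula for the area of Lagrangian surface groups of genus $g$. By the structure used for Theorem \ref{main.thm.variable}, every $\Pi\in\mathcal L^*(g)$ is represented by the graph of a pair of maps, with $\Pi$ the image of $\pi_1(S)$ under $(\phi_1,\phi_2)_*$ and each $\phi_i:S\to\Sigma$ a degree one homotopy equivalence (degree $\pm1$, with the Lagrangian condition forcing $\deg\phi_1=\deg\phi_2$). Up to conjugacy, $\Pi$ is thus determined by a mapping class $f=\phi_2\circ\phi_1^{-1}$. The formula I expect, as in the proof of Theorem \ref{main.thm.variable}, is
$$\text{area}_{\sigma\oplus\rho}(\Pi_f)=\inf_{X\in\mathcal T(\Sigma)}\Big(\text{(energy of the harmonic map }X\to(\Sigma,\sigma))+\text{(energy of the harmonic map }X\to(\Sigma,\rho)\circ f)\Big),$$
since area minimizers are conformal and area equals energy. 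The large-scale behaviour is governed by the Thurston boundary. For $f$ far out in the mapping class group, the area is asymptotically $l_\sigma(\lambda)+l_\rho(\lambda')$-type quantities, i.e.\ it behaves like $i(\lambda_\sigma, \mu)+i(\lambda_{\rho},f\mu)$ for suitable laminations.

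The plan is to recover intersection data from the spectrum through a family of test classes. Take $f=T_\gamma^n$, a power of a Dehn twist along a simple closed curve $\gamma$, and more generally $f=T_\gamma^n\circ\psi$ for fixed $\psi$. Then $\text{area}(\Pi_{T_\gamma^n})$ as $n\to\infty$ should grow like $n\cdot c(\sigma,\rho)\, l_\sigma(\gamma)l_\rho(\gamma)$ up to lower-order terms, or be governed by $2\sqrt{\cdots}$. I would compute this asymptotic concretely: along the twist, the optimal domain $X$ pinches $\gamma$, and the energies are comparable to $n\,l_\sigma(\gamma)^2/\text{(modulus)}$ and to $\text{modulus}\cdot l_\rho(\gamma)^2$-type terms. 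Optimizing yields a slope $a\cdot l_\sigma(\gamma)\,l_\rho(\gamma)$. The spectrum therefore determines $\Lambda(\gamma):=l_\sigma(\gamma)l_\rho(\gamma)$ for all simple $\gamma$. Using $T_\gamma^n\circ T_\eta^m$ or mixed asymptotics with two different rates, one aims to separate the two factors up to a single scalar. A cleaner route is to take $f$ pseudo-Anosov-like sequences $f_n$ with $f_n\to[\mu]$ and $f_n^{-1}\to[\nu]$ in the Thurston compactification. The normalized area then tends to a combination of $l_\sigma(\nu)$ and $l_\rho(\mu)$, and by varying independently one gets ratio information $l_\sigma(\alpha)/l_\sigma(\beta)$ for pairs of simple curves. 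That fixes $l_\sigma$ up to scale on $\mathcal{ML}$, hence $l_\sigma=c_1 l_{\hat\sigma}$ and similarly $l_\rho=c_2l_{\hat\rho}$. Plugging back into the product relation gives $c_1c_2=1$, so $c_0=c_2^{-1}$ works: $\sigma$ has the simple length spectrum of $c_0\hat\sigma$ and $c_0\rho$ that of $\hat\rho$. Here length scales with metric scaling conventionally, and $c_0$ is absorbed accordingly.

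The main obstacle is the asymptotic formula itself: proving that $\text{area}(\Pi_{f_n})/\|f_n\|$ converges to an explicit functional like $l_\sigma(\nu)+l_\rho(\mu)$ (or a product/square-root version) for nonpositively curved, not just hyperbolic, targets. I would first try to reuse whatever comparison the counting proof of Theorem \ref{main.thm.variable} establishes between $\text{area}_h(\Pi)$ and $i(\lambda_\sigma,\cdot)$, $i(\lambda_\rho,\cdot)$ via Liouville currents. I would then extract the limiting functional through homogeneity, and finally use density of weighted simple curves in $\mathcal{ML}$ and the continuity of $l_\tau$ to pass from the limits to the simple length spectrum.
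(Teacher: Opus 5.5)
Your overall skeleton (large mapping classes, Dehn-twist asymptotics, density in $\mathcal{ML}(\Sigma)$, separation of variables) is the right one. But there is a genuine gap at its core: the asymptotic formula for $\text{area}_{\sigma\oplus\rho}(\Pi_f)$, which you yourself call the main obstacle, is neither proved nor correctly identified. What is needed is a comparison whose error is \emph{uniform over the mapping class group}. By Lemma \ref{area.homotopic.class}, $\text{area}_{\sigma\oplus\rho}(\Pi(\phi))=A(\sigma,\phi_*\rho)$. By Proposition \ref{prop2} together with $\lambda_{\phi_*\rho}=\phi(\lambda_\rho)$, for all nonpositively curved $\sigma,\rho$,
\[
i(\lambda_\sigma,\phi(\lambda_\rho))\leq \text{area}_{\sigma\oplus\rho}(\Pi(\phi))\leq i(\lambda_\sigma,\phi(\lambda_\rho))+\area_\sigma(\Sigma)+\area_\rho(\Sigma).
\]
The upper bound comes from Minsky's bound \eqref{ejphi}, $\|\Phi\|=i(\mu_V(\Phi),\mu_H(\Phi))$, and Lemma \ref{wolf.lemma4.6} applied twice; the lower bound comes from the Croke--Fathi argument. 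So, up to $O(1)$, the area is the single bilinear pairing $i(\lambda_\sigma,\phi(\lambda_\rho))$. Your candidates are inconsistent with this and with each other. First, $i(\lambda_\sigma,\mu)+i(\lambda_\rho,f\mu)$ is additive. Second, your collar heuristic $n\,l_\sigma(\gamma)^2/M+M\,l_\rho(\gamma)^2$ minimizes to $2\sqrt n\,l_\sigma(\gamma)l_\rho(\gamma)$, i.e.\ sublinear growth; the twist term should be of order $n^2/M$, which gives slope exactly $l_\sigma(\gamma)l_\rho(\gamma)$. Third, a ``combination'' of $l_\sigma(\nu)$ and $l_\rho(\mu)$ would not justify your final step $l_\sigma=c_1l_{\hat\sigma}$, $l_\rho=c_2l_{\hat\rho}$, $c_1c_2=1$, which requires an exact product with a metric-independent normalization.

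Even granting the estimate, your Dehn-twist step only recovers the diagonal products $l_\sigma(\gamma)l_\rho(\gamma)$, and these cannot separate the factors. The pairs $(\sigma,\rho)$ and $(\rho,\sigma)$ have identical diagonal data, yet the theorem's conclusion fails for this pair unless $l_\sigma$ and $l_\rho$ are proportional. The missing step is to use the precomposition you mention and then abandon. With $\phi_n=(D_\gamma^n)^{-1}$ and arbitrary $\psi$, $\text{area}_{\sigma\oplus\rho}(\Pi(\phi_n\psi^{-1}))=i(D_\gamma^n(\lambda_\sigma),\lambda_{\psi^*\rho})+O(1)$, so \eqref{dehn.twist.limit} gives the slope $l_\sigma([\gamma])\,l_\rho([\psi(\gamma)])$. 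The orbit $\{t\psi(\delta_\gamma):t>0,\ \psi\in\text{Map}(\Sigma)\}$ and the weighted simple curves are both dense in $\mathcal{ML}(\Sigma)$. Hence $l_\sigma(\mu)l_\rho(\nu)=l_{\hat\sigma}(\mu)l_{\hat\rho}(\nu)$ for all $\mu,\nu\in\mathcal{ML}(\Sigma)$, and fixing $\nu_0$ separates the variables.

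Your two alternatives could also be completed once the estimate is available, but neither is carried out. Iterated limits over two-parameter twists about $\gamma$ and $\eta$ give $i(\delta_\gamma,\delta_\eta)\,l_\sigma(\gamma)l_\rho(\eta)$, a special case of the above. Pseudo-Anosov powers normalized by the stretch factor give $l_\sigma(\tau_+)l_\rho(\tau_-)/i(\tau_+,\tau_-)$; this route also needs density of pole pairs and an extension of \eqref{pseudo.anosov} to the current $\lambda_\rho\notin\mathcal{ML}(\Sigma)$. Finally, scaling a metric by $c_0$ scales lengths by $\sqrt{c_0}$, so the constant should be $c_0=c_1^2=c_2^{-2}$, not $c_2^{-1}$.
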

	
	Using Theorem \ref{rigidity.thm} we prove the following corollaries.
	\begin{cor}[Lagrangian Area Rigidity]\label{area.rigidity}
		Consider K\"{a}hler--Einstein metrics $h=\sigma\oplus\tau$ and $\bar h=\bar\sigma\oplus \bar\tau$, not necessarily with the same Einstein constant, and so that
		$$A(h)=A(\bar h).$$
		Then $h$ and $\bar h$ are isometric and the isometry is homotopic to the identity.
	\end{cor}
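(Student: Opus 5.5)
Apply Theorem \ref{rigidity.thm} to the pairs $(\sigma,\tau)$ and $(\bar\sigma,\bar\tau)$, which are hyperbolic up to scaling and hence nonpositively curved. This gives $c_0>0$ such that $\sigma$ has the same simple length spectrum as $c_0\bar\sigma$, and $c_0\tau$ has the same simple length spectrum as $\bar\tau$. The remaining work is to turn this into an isometry.

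\textbf{Step 1: the Einstein constants fix the curvatures.} Since $h$ is K\"ahler--Einstein with $\operatorname{Ric}(h)=\lambda h$, both factors $\sigma$ and $\tau$ have constant curvature equal to $\lambda$. Similarly, $\bar\sigma$ and $\bar\tau$ have constant curvature $\bar\lambda$. The same Einstein constant appears on both factors, because the Ricci tensor of a product splits as the Ricci tensors of the factors. By Gauss--Bonnet, $\sigma$ and $\tau$ therefore have equal area, and so do $\bar\sigma$ and $\bar\tau$.

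\textbf{Step 2: determine $c_0$.} Simple length spectrum rigidity for constant negative curvature metrics is classical: the lengths of finitely many simple closed curves already determine a point of Teichm\"uller space, via Fenchel--Nielsen coordinates or the $9g-9$ theorem. Rescaling a metric by $c$ multiplies its lengths by $\sqrt c$. The identity of spectra then forces the curvatures to match: $\sigma$ and $c_0\bar\sigma$ have the same curvature, as do $c_0\tau$ and $\bar\tau$. This gives
$$\lambda=\bar\lambda/c_0\quad\text{and}\quad \lambda/c_0=\bar\lambda.$$
To justify the curvature matching, note that the marked simple length spectrum determines the area. This follows from Mirzakhani-type growth: the count of simple curves of length at most $L$ behaves like $B(\cdot)L^{6g-6}$, and it scales with the metric. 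Alternatively one can use the hyperbolic trace identities directly. Combining the two equations gives $c_0^2=1$, so $c_0=1$ and $\lambda=\bar\lambda$.

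\textbf{Step 3: conclude.} Now $\sigma$ and $\bar\sigma$ are hyperbolic metrics with the same curvature and the same marked simple length spectrum, and likewise for $\tau$ and $\bar\tau$. By the classical rigidity above, $\sigma$ and $\bar\sigma$ differ by a diffeomorphism $\phi$ isotopic to the identity, and $\tau$ and $\bar\tau$ differ by such a diffeomorphism $\psi$. Then $\phi\times\psi$ is an isometry from $h$ to $\bar h$, and it is homotopic to the identity.

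The main obstacle is the normalization in Step 2. Theorem \ref{rigidity.thm} leaves a scaling ambiguity, and one must check that a marked simple length spectrum cannot agree with that of a rescaled hyperbolic metric of different curvature. I would handle this with the area argument via the growth rate of simple closed curves. Care is also needed because "same simple length spectrum" in the theorem should be read as marked, compatible with the homotopy-to-identity conclusion.
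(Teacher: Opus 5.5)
Your outline has the same shape as the paper's proof: apply Theorem~\ref{rigidity.thm}, use that both factors of a K\"ahler--Einstein product have the same constant curvature, show that the constant from the theorem must equalize curvatures, get $c_0^2=1$, and conclude by Teichm\"uller rigidity. The algebra in Step~2 and all of Step~3 are fine.

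The gap is exactly the step you flag as the main obstacle. You need: if $a\sigma_0$ and $b\bar\sigma_0$ (with $\sigma_0,\bar\sigma_0$ hyperbolic and $a,b>0$) have the same marked simple length spectrum, then $a=b$. The growth argument you propose cannot give this. Counting gives
\[
\#\{\gamma\in\mathcal S:\ l_{a\sigma_0}(\gamma)\le L\}\sim n_g\,\frac{B(\sigma_0)}{b_g}\Big(\frac{L}{\sqrt a}\Big)^{6(g-1)},
\]
so equal spectra only yield $a^{-3(g-1)}B(\sigma_0)=b^{-3(g-1)}B(\bar\sigma_0)$. Since $B$ is not constant on $\mathcal M(\Sigma)$ (it tends to infinity as a curve is pinched), this does not force $a=b$. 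In fact the relation carries no information at all. The value of $B$ at a metric is computed from its length function on $\mathcal{ML}(\Sigma)$, which the simple length spectrum already determines, so equal spectra give equal $B$ automatically. The leading constant measures the unit ball $\{l\le 1\}$ in $\mathcal{ML}(\Sigma)$, not the area. Your fallback, ``hyperbolic trace identities directly,'' points at something that could be made to work, since length relations on hyperbolic surfaces are not homogeneous. As written, though, it is not an argument.

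The missing ingredient is the classical fact that the marked simple length spectrum embeds Teichm\"uller space \emph{projectively}. That is, if two hyperbolic metrics satisfy $l_{\sigma_0}=k\,l_{\bar\sigma_0}$ on $\mathcal S$, then $k=1$ and $\sigma_0=\bar\sigma_0$ in $\mathcal T(\Sigma)$. This is part of Thurston's compactification theorem: the map $\mathcal T(\Sigma)\to P(\R_{+}^{\mathcal S})$ is injective \cite[Proposition~7.12]{fathi-book}. It is exactly what the paper uses to conclude that its proportionality constants $\sqrt{d_0\bar c/c}$ and $\sqrt{\bar c/(d_0c)}$ both equal $1$.

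The $9g-9$ theorem and Fenchel--Nielsen coordinates do not give this by themselves. They recover a hyperbolic metric from its lengths, but you must still rule out that $k\,l_{\bar\sigma_0}$ is the length function of some other hyperbolic metric. Once you cite or prove the projective injectivity, your argument goes through and coincides with the paper's.
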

	We use $\mathcal S^*(k)$ to denote the set of all surface subgroups of $\pi_1(\Sigma\times\Sigma)$ with genus $k$ modulo the equivalence relation of conjugacy. Using Theorem \ref{rigidity.thm} we deduce the following corollary.
	\begin{cor}\label{rigidity.cor} Consider $h,\hat h$ nonpositively curved metrics on $\Sigma\times \Sigma$ so that for every $k\geq g$ and $\Pi\in \mathcal S^*(k)$ we have 
		$$\text{area}_{h}(\Pi)=\text{area}_{\hat h}(\Pi).$$
		Then $h$ and $\hat h$ are isometric and the isometry is homotopic to the identity.
	\end{cor}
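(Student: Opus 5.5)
The plan is to reduce Corollary \ref{rigidity.cor} to Theorem \ref{rigidity.thm} and then to marked-length-spectrum rigidity on each factor. First, by \cite[Theorem 2]{gromoll-wolf} I write $h=\sigma\oplus\rho$ and $\hat h=\hat\sigma\oplus\hat\rho$ with all four metrics nonpositively curved on $\Sigma$. Since $\mathcal L^*(g)\subset\mathcal S^*(g)$, the hypothesis gives $A(\sigma\oplus\rho)=A(\hat\sigma\oplus\hat\rho)$. Theorem \ref{rigidity.thm} then yields $c_0>0$ such that $\sigma$ and $c_0\hat\sigma$ have the same simple length spectrum, and likewise $c_0\rho$ and $\hat\rho$.

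Next I fix $c_0=1$ using total area, which is where the non-Lagrangian classes in $\mathcal S^*(k)$ for $k\ge g$ are needed. Take the graph of a homotopy equivalence, or better the fibers: the class of $\Sigma\times\{pt\}$ in $\mathcal S^*(g)$ (subgroup $\pi_1(\Sigma)\times 1$). Its minimal area in $\sigma\oplus\rho$ should be $\text{area}(\sigma)$, since projection to the first factor is area-nonincreasing and has degree one on any representative. Similarly the class $1\times\pi_1(\Sigma)$ has minimal area $\text{area}(\rho)$. The hypothesis gives $\text{area}(\sigma)=\text{area}(\hat\sigma)$ and $\text{area}(\rho)=\text{area}(\hat\rho)$. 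If $c_0\neq 1$, say $c_0>1$, I need the simple length spectrum to control area. For this I use the identity $l_\tau(\lambda)=i(\lambda_\tau,\lambda)$: equal simple length spectra means $l_\sigma=l_{c_0\hat\sigma}$ on weighted simple curves, hence by density on all of $\mathcal{ML}(\Sigma)$. It follows that $B(\sigma)=B(c_0\hat\sigma)=c_0^{-6(g-1)}B(\hat\sigma)$, and similarly $B(\hat\rho)=c_0^{-6(g-1)}B(\rho)$. This alone does not pin down $c_0$. So instead I would compare the diagonal-type classes, namely the graph subgroups $\{(\gamma,\gamma)\}$, whose minimal area for the product metric is some functional of the pair. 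Alternatively, and more simply, I would use Theorem \ref{codimension.one.thm}-type reasoning with the stable norm. The cleanest route seems to be to upgrade simple-length equality to full marked length spectrum equality and then compare areas. By Croke--Otal-type rigidity, whose nonpositive-curvature version is due to Croke--Fathi--Feldman, equal marked length spectra imply isometry isotopic to the identity, and in particular equal area. Then $\text{area}(\sigma)=c_0^2\,\text{area}(\hat\sigma)=c_0^2\,\text{area}(\sigma)$ forces $c_0=1$.

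The main obstacle is therefore passing from equality of the \emph{simple} length spectrum to equality of the full marked length spectrum, because simple length spectrum is not known to determine the metric in general. Here I would use the higher-genus classes $\Pi\in\mathcal S^*(k)$. Products $\gamma\times\eta$ of closed curves give tori, which have genus $1<g$ and so are excluded. Instead, for any closed geodesic $\gamma$ I would build surface subgroups (for example fiber sums or covers of $\Sigma$ times a curve in the other factor, tubed together) whose minimal area is asymptotically $\text{area}(\sigma)\cdot n+ l_\rho(\gamma)\cdot(\cdots)$. Extracting $l_\rho(\gamma)$ from asymptotics in $n$ then gives the full marked length spectrum of $\rho$, and symmetrically that of $\sigma$. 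If that construction is too delicate, the fallback is to use the Liouville current: the simple length function on $\mathcal{ML}$ determines the Liouville current restricted to simple geodesics, and I would attempt to show, via the area function on $\mathcal S^*(k)$, that the Liouville currents $\lambda_\rho$ and $\lambda_{\hat\rho}$ coincide. Otal's argument then gives the isometry, and combining the two factors yields the isometry $h\to\hat h$ homotopic to the identity.
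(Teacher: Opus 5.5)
\paragraph{The gap.} Your outer framework matches the paper. You split $h$ and $\hat h$ as products, but you need the splitting isometries to be homotopic to the identity, since the hypothesis concerns marked classes; the paper gets this from \cite{jost-yau}. You then apply Theorem \ref{rigidity.thm} via $\mathcal L^*(g)\subset\mathcal S^*(g)$, read off $\text{area}(\sigma)=\text{area}(\hat\sigma)$ and $\text{area}(\rho)=\text{area}(\hat\rho)$ from the fiber classes, and conclude with Croke--Fathi--Feldman.

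The gap is exactly the step you flag as the main obstacle. Croke--Fathi--Feldman needs proportionality of the \emph{full} marked length spectra, and you only have it for the simple length spectra; neither of your suggestions closes this. The ``fiber sums tubed together'' classes are never specified. You prove no lower bound on area valid for \emph{all} representatives of such a class, which is where the real work lies. The coefficient $(\cdots)$ in your predicted asymptotics is left open, so nothing is actually extracted. The Liouville-current fallback only restates the goal. As written, the argument stops at its crux.

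\paragraph{The missing idea in the paper.} The paper passes to finite covers and uses Scott's theorem. For a finite cover $p:S\to\Sigma$ of genus $k$, the classes $\Pi(p,p,\phi)$, $\phi\in\text{Map}(S)$, lie in $\mathcal S^*(k)$ and satisfy $\text{area}_{\sigma\oplus\rho}(\Pi(p,p,\phi))=A(p^*\sigma\oplus p^*\rho)(\hat\Pi(\phi))$. Theorem \ref{rigidity.thm} on $S$ then gives simple-length proportionality for the pairs $p^*\sigma,p^*\hat\sigma$ and $p^*\rho,p^*\hat\rho$, with some constant $c(p)$. Comparing lengths of a component of $p^{-1}(\alpha)$, for $\alpha$ simple in $\Sigma$, shows $c(p)=c_0$. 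By \cite{scott}, every closed curve on $\Sigma$ has a power that lifts to a simple closed curve in some finite cover, so the proportionality holds on all conjugacy classes. This is where the classes with $k>g$ are used.

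\paragraph{A concrete version of your own idea.} Your ``curve in the other factor'' idea can also be made rigorous, even inside $\mathcal S^*(g)$, but only if done concretely. Take $\xi\in H^1(\Sigma,\Z)$ with Poincar\'e dual $\beta$ and any closed curve $\gamma$; the subgroup $\{(a,\gamma^{\xi(a)})\}$ lies in $\mathcal S^*(g)$. Compose the second component of any representative with the $1$-Lipschitz nearest-point retraction of $\hat\Sigma/\langle\gamma\rangle$ onto its closed $\rho$-geodesic. This does not increase area and reduces the problem to $\Sigma\times(\R/l_\rho(\gamma)\Z)$. There the proof of Proposition \ref{area.estimate} gives area between $l_\rho(\gamma)\|\beta\|_\sigma$ and $l_\rho(\gamma)\|\beta\|_\sigma+\text{area}_\sigma(\Sigma)$. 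Replacing $\xi$ by $n\xi$ and letting $n\to\infty$, the hypothesis yields $l_\rho(\gamma)\|\beta\|_\sigma=l_{\hat\rho}(\gamma)\|\beta\|_{\hat\sigma}$ for all $\gamma$ and $\beta$. This gives proportional marked length spectra for $\rho,\hat\rho$, and symmetrically for $\sigma,\hat\sigma$. Your planned ending (Croke--Fathi--Feldman plus the fiber areas) then applies.
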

	On $\Sigma\times S^1$ such area rigidity as in Theorem \ref{rigidity.thm} does not hold, as we now explain.  Choose a diffeomorphism $\phi$ of $\Sigma$ representing a non-trivial element of the Torelli group, meaning that $\phi$ acts as the identity on $H_1(\Sigma,\Z)$. From the description of $\mathcal S^*(g)$ in Lemma \ref{conjugacy.one} we see that for every $\Pi\in \mathcal S^*(g)$ we have that $(\phi\times Id)_*(\Pi)=\Pi$. With $\sigma$ a hyperbolic metric, $\sigma$ and $\phi^*\sigma$ induce distinct elements in $\mathcal T(\Sigma)$ but nonetheless, we have that  for all $\Pi\in \mathcal S^*(g)$
	\begin{align*}
		\text{area}_{\phi^*\sigma\oplus d\theta^2}(\Pi)&=\text{area}_{(\phi\times Id)^*(\sigma\oplus d\theta^2)}(\Pi)= \text{area}_{\sigma\oplus d\theta^2}((\phi\times Id)_*(\Pi))\\
		&=\text{area}_{\sigma\oplus d\theta^2}(\Pi).
	\end{align*}
	
\subsection{New questions}
	Some of the classical questions {concerning} closed geodesics on {negatively curved} surfaces are their length growth, whether their lengths determine the metric, and what is the typical behavior of geodesics with large length. Corollary \ref{main.thm} and Theorem \ref{rigidity.thm} answer analogues of the first two questions.  
	
	There is a finite set of simple closed geodesics (with $9(g-1)$ elements) whose (marked) lengths uniquely determine an element in Teichm\"{u}ller space (see {the} $9g-9$ Theorem in \cite{farb-margalit}).  In Theorem~2.1 of \cite{mcshane}, the authors find a finite set $S_0$ of simple closed geodesics of $\Sigma$ so that if 
	$$\text{area}_{\sigma\oplus\tau}([\alpha\times \beta])=\text{area}_{\bar\sigma\oplus \bar\tau}([\alpha\times \beta])\quad\text{for all }\alpha,\beta\in S_0,$$
	then $\sigma$ and $\tau$ induce the same element in $\mathcal T(\Sigma)$ as $\bar\sigma$ and $\bar\tau$, respectively.
	
	\begin{conj} There is a finite set $\mathcal L_0\subset \mathcal L^*(g)$ so that if $h$ and $\hat h$ are K\"{a}hler--Einstein metrics on $\Sigma\times\Sigma$ with
		$$A(h)(\Pi)=A(\hat h)(\Pi)\quad\text{for all }\Pi\in\mathcal L_0,$$
		then $h$ and $\hat h$ are isometric and the isometry is homotopic to the identity.
	\end{conj}
	This would mean that the area of finitely many minimal Lagrangians  determines the  K\"ahler--Einstein metric.  In Section \ref{evidence.conjecture} we give some evidence towards the conjecture. Fixing $\sigma_0\in \mathcal T(\Sigma)$ and identifying $\mathcal{ML}(\sigma_0)$ (see \eqref{PML}) with the boundary of Teichm\"uller space, so that $\overline{\mathcal T(\Sigma)}={\mathcal T(\Sigma)}\cup \mathcal{ML}(\sigma_0)$, there is a {natural} way to renormalize $(\sigma,\rho)\mapsto A(\sigma\oplus\rho)(\Pi)$ so that it extends continuously to $\overline{\mathcal T(\Sigma)}\times \overline{\mathcal T(\Sigma)}$ (see \cite[Section 8]{BMS} for a related renormalization).  In Theorem \ref{finiteness.rigidity.thm} we check that the (analogous) conjecture holds on $\mathcal{ML}(\sigma_0)\times \mathcal{ML}(\sigma_0)$.

	Regarding the behavior of minimal surfaces with large area we propose the following problem. 
	
	An element $L\in  \mathcal L(k)$ is {\em primitive} if there is no finite cover map from $L$ to some $L'\in  \mathcal L(k')$ with $k'< k$. Consider the subset $\mathcal L\subset \cup_{k\geq g}\mathcal L(k)$ of those elements that are primitive. Given $A$, consider the finite set $\mathcal I(A-1,A)$ of all elements  in $\mathcal L$ whose area is in $(A-1,A)$.  With $LGr(2,4)$ denoting the bundle of Lagrangian planes in $T(\Sigma\times \Sigma)$ we obtain a measure
	$$\mu_A(f)=\frac{1}{A\# \mathcal I(A-1,A)}\sum_{L\in\mathcal I (A-1,A)}\int_Lf(x,T_xL)dA_{L}(x),\, f\in C^0(LGr(2,4)).$$
	\begin{prob}  Describe the weak-* subsequential limits of $\mu_A$ as $A\to\infty$.
	\end{prob}
	The limit of $\mu_A$ will give information regarding a typical minimal Lagrangian with large area. %The naive conjecture would be that the limit becomes equidistributed. 
	
	%A natural question is to extend Theorem \ref{main.thm} to the case where the ambient metric is only assumed to be nonpositively curved. The lower bound for the area given in Proposition \ref{prop2} also holds for nonpositive  curvature but the upper bound seems to require a new argument  in the context of variable curvature.

\subsection{Acknowledgments} 
	
	B.L. thanks Aaron Calderon for a helpful conversation.

\section{Proof of Theorems in $\Sigma\times S^1$}
	
\subsection{Proof of Theorem \ref{codimension.one.torus.thm}}
	We use $\mathcal C$ to denote the set of primitive closed $\sigma$-geodesics on $\Sigma$. 
	
	\begin{lemm}\label{torus.cover} Given $\Pi\in\mathcal S^*(1)$  there is a unique $\alpha=\alpha(\Pi)\in \mathcal C$ and $d=d(\Pi)\in \N$ so that  $\text{area}_{\sigma\oplus d\theta^2}(\Pi)=2\pi d\,l_\sigma(\alpha)$. Moreover, given $d\in\N$ and $\alpha\in \mathcal C$ we have
		$\#\{\Pi\in \mathcal S^*(1): \alpha(\Pi)=\alpha, d(\Pi)=d\}\leq d^2.$
	\end{lemm}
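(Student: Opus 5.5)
The plan is to classify genus one surface subgroups of $\pi_1(M)=\pi_1(\Sigma)\times\Z$ algebraically, and then compute the area of each class using a product-type minimal representative together with a calibration lower bound.

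\emph{Step 1: the algebraic structure of $\Pi$.} Let $\Gamma\cong\Z^2$ represent $\Pi$, and let $q:\pi_1(\Sigma)\times\Z\to\pi_1(\Sigma)$ be the projection. The image $q(\Gamma)$ is an abelian subgroup of $\pi_1(\Sigma)$, hence cyclic. It is nontrivial, since otherwise $\Gamma\subset\Z$. So $q(\Gamma)=\langle a^m\rangle$ for a unique primitive $a$ up to conjugacy and sign; this $a$ corresponds to $\alpha\in\mathcal C$. Since $\Gamma$ has rank two, $\Gamma\cap(\{1\}\times\Z)=\{1\}\times n\Z$ with $n\geq1$. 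Hence $\Gamma$ is generated by $(1,n)$ and $(a^m,j)$, with $0\le j<n$ uniquely determined. Set $d=mn$, which is the index of $\Gamma$ in $\langle a\rangle\times\Z$. The pair $(m,n)$ satisfies $mn=d$, so it has at most $d$ choices, and $j$ has $n\le d$ choices. This gives at most $d^2$ classes with $\alpha(\Pi)=\alpha$ and $d(\Pi)=d$. Conjugacy in $\pi_1(\Sigma)\times\Z$ only conjugates the first factor, and $a$ is determined up to conjugacy by $\alpha$, so this count is well defined. The one subtlety is orientation: $a$ versus $a^{-1}$ represents the same unoriented geodesic. I would either regard $\mathcal C$ as unoriented and absorb the sign into the normal form (replacing $m$ by $-m$ flips the generator), or simply note that the bound $d^2$ survives after fixing a normalization.

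\emph{Step 2: the upper bound on area.} The subgroup $\langle a\rangle\times\Z$ is realized by the totally geodesic flat torus $\alpha\times S^1$, whose area is $2\pi l_\sigma(\alpha)$. Since $\Gamma$ has index $d$ in this subgroup, it is realized by a $d$-sheeted cover of that torus. This is a flat immersed torus of area $2\pi d\,l_\sigma(\alpha)$, so $\text{area}(\Pi)\le 2\pi d\,l_\sigma(\alpha)$.

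\emph{Step 3: the lower bound on area (the main step).} Take any immersion $(T,\iota)\in\Pi$ and lift it to the cover $\tilde M=(\tilde\Sigma/\langle a\rangle)\times S^1$, an annulus cross circle. In the annulus there is a unique closed geodesic $\alpha$, and the projection $\pi$ onto it is distance-nonincreasing because the annulus is negatively curved. Consider the map $\pi\times\mathrm{id}:\tilde M\to\alpha\times S^1$. It is area-nonincreasing on 2-planes: $d\pi$ has norm at most $1$, and the $S^1$ factor is an isometry. Therefore the area of $T$ is at least the area of its image counted with degree, which is $|\deg|\cdot 2\pi l_\sigma(\alpha)$. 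The degree of the induced torus map $T\to\alpha\times S^1$ on $\pi_1$ is the index $[\langle a\rangle\times\Z:\Gamma]=d$. Equivalently, one can calibrate with the closed 2-form $\pi^*(ds)\wedge d\theta$, which has comass at most $1$; its integral over $T$ equals $2\pi d\,l_\sigma(\alpha)$ by homology. Combining this with Step 2 gives the equality $\text{area}_{\sigma\oplus d\theta^2}(\Pi)=2\pi d\,l_\sigma(\alpha)$.

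\emph{Uniqueness.} The primitive element $\alpha$ is determined by $\Pi$ through $q(\Gamma)$, and $d$ is determined as an index. So the pair $(\alpha,d)$ is unique.

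The main obstacle I expect is the comass/degree argument in Step 3. Carrying out the calibration in the cover cleanly, and justifying that $\pi$ is $1$-Lipschitz for variable negative curvature, is the delicate part. For the Lipschitz claim I would use the convexity of the distance function to the geodesic, which holds by the comparison theorem.
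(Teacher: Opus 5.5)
Your proof is correct, but it follows a different route from the paper's.

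\emph{The paper's route} is analytic. It takes a conformal area-minimizing torus in $\Pi$ (existence from Schoen--Yau) and notes that it is harmonic into a nonpositively curved target. Eells--Sampson then makes it totally geodesic. $\pi_1$-injectivity forces the image to be $\alpha\times S^1$ for a unique primitive $\alpha$, and the minimizer is a degree-$d$ covering of this flat torus. The count is then the number of index-$d$ subgroups of $\pi_1(\alpha\times S^1)\cong\Z^2$.

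\emph{Your route} has three parts. You extract $\alpha$ and $d$ purely algebraically: abelian subgroups of $\pi_1(\Sigma)$ are cyclic, and $\Gamma$ is an index-$d$ subgroup of $\langle a\rangle\times\Z$ in Hermite normal form. This gives the count $\sum_{n\mid d}n\le d^2$, the same as the paper's subgroup count. The upper bound comes from the explicit $d$-fold cover of $\alpha\times S^1$. The lower bound holds for \emph{every} immersion in $\Pi$, via the retraction $\pi\times\mathrm{id}$ of the annular cover onto $\alpha\times S^1$ (operator norm at most $1$, hence $2$-Jacobian at most $1$) together with $\deg=\pm[\langle a\rangle\times\Z:\Gamma]$.

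The step you flag as delicate is standard. Nearest-point projection onto a complete geodesic in the Hadamard surface $\tilde\Sigma$ is $1$-Lipschitz. It is also smooth, since the normal exponential map of a geodesic has no focal points in nonpositive curvature. Being $\langle a\rangle$-equivariant, it descends to the annulus with $|d\pi|\le 1$.

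\emph{What each buys.} Your route avoids the existence and regularity theory of minimizers and the Bochner argument. It also makes transparent that $\alpha(\Pi)$ and $d(\Pi)$ are invariants of the conjugacy class, which is the right reading of ``unique'' in the statement. The paper's route identifies the minimizers themselves as totally geodesic flat tori, in line with Meeks--Rosenberg. It also gives a template the paper reuses verbatim for Lemma~\ref{top.argument.genus1}. Your projection argument would adapt there too, using $\pi_\alpha\times\pi_\beta$ on a product of annuli.
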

	\begin{proof}
		Fix $\Pi\in \mathcal S^*(1)$ and consider 
		$\iota:T^2\rightarrow \Sigma\times S^1$
		to be  a conformal area-minimizing  map in $\Pi$. The map is harmonic, the ambient space has nonpositive curvature, and therefore Eells--Sampson \cite{eells-sampson} implies that $\iota$ is totally geodesic. The map is $\pi_1$-injective and so  there is a unique  $\alpha=\alpha(\Pi)\in \mathcal C$ so that $\iota(T^2)= \alpha\times S^1.$ Furthermore, $\iota_*(\Z^2)$ is a rank-two subgroup of $\pi_1(\alpha\times S^1)$ and has thus finite index. Therefore $\iota:T^2\rightarrow \alpha\times S^1$ is a finite covering map.
		Denote its degree by $d=d(\Pi)$. We conclude that
		$$\text{area}_{\sigma\oplus d\theta^2}(\Pi)=2\pi d\,l_\sigma(\alpha).$$
		
		For the counting statement, fix $\alpha\in \mathcal C$ and $d\in \N$. Every $\Pi$ with $\alpha(\Pi)=\alpha$ and $d(\Pi)=d$ is represented by a connected degree-$d$ cover of the torus $\alpha\times S^1$. The number of such covers is the number of index-$d$ subgroups of $\pi_1(\alpha\times S^1)\cong \Z^2$, which is known to be at most $d^2$.
	\end{proof}
	
	\begin{thm} With $\sigma$ a negatively curved metric on $\Sigma$ with  topological entropy $\delta(\sigma)$ we have
		$$\lim_{A\to \infty}\frac{\ln \#\{\Pi\in \mathcal S^*(1):\text{area}_{\sigma\oplus d\theta^2}(\Pi)\leq A\}}{A}=\frac{\delta(\sigma)}{2\pi}.$$
	\end{thm}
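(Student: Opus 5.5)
Let $N(A)=\#\{\Pi\in \mathcal S^*(1):\text{area}_{\sigma\oplus d\theta^2}(\Pi)\leq A\}$ and $P(L)=\#\{\alpha\in\mathcal C: l_\sigma(\alpha)\leq L\}$. The plan is to sandwich $N(A)$ between expressions in $P$, using Lemma \ref{torus.cover}, and then invoke Margulis' prime geodesic theorem for negatively curved surfaces: $P(L)\sim e^{\delta(\sigma)L}/(\delta(\sigma)L)$. All we actually need from it is
$$\lim_{L\to\infty}\frac{\ln P(L)}{L}=\delta(\sigma).$$

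For the lower bound, each $\alpha\in\mathcal C$ gives the embedded torus $\alpha\times S^1$. It is $\pi_1$-injective because $\alpha$ is a primitive closed geodesic in a negatively curved surface, so $\langle\alpha\rangle\times\Z$ injects into $\pi_1(\Sigma)\times\Z$. Its conjugacy class $\Pi_\alpha$ has $\alpha(\Pi_\alpha)=\alpha$ and $d(\Pi_\alpha)=1$, so by the uniqueness part of Lemma \ref{torus.cover} we get $\text{area}(\Pi_\alpha)=2\pi l_\sigma(\alpha)$. Distinct $\alpha$ give distinct $\Pi_\alpha$, again by the uniqueness of $\alpha(\Pi)$. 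Hence
$$N(A)\geq P\!\left(\frac{A}{2\pi}\right),$$
and therefore $\liminf_{A\to\infty}\ln N(A)/A\geq \delta(\sigma)/2\pi$.

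For the upper bound, Lemma \ref{torus.cover} assigns to each $\Pi$ with area at most $A$ a pair $(\alpha,d)$ with $2\pi d\,l_\sigma(\alpha)\leq A$, and each pair occurs for at most $d^2$ classes. Since $l_\sigma(\alpha)\geq \text{sys}(\sigma)>0$, the degree satisfies $d\leq A/(2\pi\,\text{sys}(\sigma))$. Therefore
$$N(A)\leq \sum_{d=1}^{\lfloor A/(2\pi\,\text{sys}(\sigma))\rfloor} d^2\, P\!\left(\frac{A}{2\pi d}\right)\leq C A^3\, P\!\left(\frac{A}{2\pi}\right),$$
using that $P$ is nondecreasing. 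Taking logarithms, the $\ln(CA^3)$ term is $o(A)$, which gives $\limsup_{A\to\infty}\ln N(A)/A\leq\delta(\sigma)/2\pi$.

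The only substantive input is the exponential growth rate of primitive closed geodesics at the topological entropy, which is classical: Margulis for negatively curved manifolds. The two points that need checking are that the tori $\alpha\times S^1$ genuinely give distinct elements of $\mathcal S^*(1)$, which follows from the uniqueness in Lemma \ref{torus.cover}, and that the polynomial multiplicity factor $d^2$ together with the bounded range of $d$ only contributes a subexponential factor. Both are routine.
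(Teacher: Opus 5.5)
Your proof is correct and is essentially the paper's argument: bound $d$ linearly in $A$ via the systole, absorb the at most $d^2$ multiplicity into an $O(A^3)$ factor times $P(A/2\pi)$, take the lower bound from the degree-one tori $\alpha\times S^1$, and let Margulis supply the exponential rate. One wording point: for non-simple $\alpha$ the torus $\alpha\times S^1$ is only immersed, not embedded, but immersions are all that $\mathcal S^*(1)$ requires.
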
	
	\begin{proof}	
		There is $c_0=c_0(\sigma)$ so that if $2\pi d\,l_\sigma(\alpha)\leq A$ then $d<c_0A$.  If we set $N=\lfloor c_0A \rfloor$, we have from Lemma \ref{torus.cover} that
		$$\{\Pi\in \mathcal S^*(1):\text{area}_{\sigma\oplus d\theta^2}(\Pi)\leq A\}\subset \bigcup_{d=1}^N\{\Pi\in \mathcal S^*(1):2\pi l_\sigma(\alpha(\Pi))\leq A, d(\Pi)=d\}.$$
		Using Lemma \ref{torus.cover} again we see that
		$$\#\{\Pi\in \mathcal S^*(1):\text{area}_{\sigma\oplus d\theta^2}(\Pi)\leq A\}\leq c_0^3A^3\#\{\alpha\in \mathcal C:2\pi l_\sigma(\alpha)\leq A\}.$$
		Hence, using Margulis \cite{margulis} in the last equality, we obtain that	
		\begin{multline*}
			\limsup_{A\to \infty}\frac{\ln \#\{\Pi\in \mathcal S^*(1):\text{area}_{\sigma\oplus d\theta^2}(\Pi)\leq A\}}{A}\\
			\leq \lim_{A\to \infty}\frac{\ln \#\{\alpha\in \mathcal C:2\pi l_\sigma(\alpha)\leq A\}}{A}=\frac{\delta(\sigma)}{2\pi}.
		\end{multline*}
		The lower bound follows in the same way because
		$$\#\{\alpha\in \mathcal C:2\pi l_\sigma(\alpha)\leq A\}\leq \#\{\Pi\in \mathcal S^*(1):\text{area}_{\sigma\oplus d\theta^2}(\Pi)\leq A\}.$$
	\end{proof}	
\subsection{Proof of Theorem \ref{codimension.one.thm}} 
	Consider  a surface $S$ of genus $k>1$. Fix a finite cover from $S$ to $\Sigma$ in each equivalence class of $C_g(k)$, so that $C_g(k)=\{p_i\}_{i=1}^n$. If for some homeomorphism $T$ of $S$ we have $p_i\circ T$ homotopic to $p_j$, then $p_i=p_j$ and $T$ is homotopic to an element of $D(p_i)$ (recall that $D(p_i)$ is the deck group of $p_i$).
	
	Given $\gamma\in H_1(S,\Z)$ fix  a map $\phi(\gamma):S\rightarrow S^1$  representing the cohomology class of $PD(\gamma)\in H^1(S,\Z)$.  If $T$ is an orientation-preserving homeomorphism  of $S$ and  $\phi(\beta)\circ T$ is homotopic to $\phi(\gamma)$, then $T^*PD(\beta)=PD(\gamma)$ and so $\beta=T_*(\gamma)$.

	Given $\gamma\in H_1(S,\Z)$ and $p\in C_g(k)$ consider the $\pi_1$-injective map
	$$\iota(p,\gamma):S\rightarrow \Sigma\times S^1,\quad \iota(p,\gamma)(x)=(p(x),\phi(\gamma)(x)).$$
	If $T$ is a deck transformation of $p$ then $\iota(p,\gamma)$ and $\iota(p,T_*\gamma)\circ T$ are homotopic maps. When $g=k$ we use $\iota(\gamma)$ instead of $\iota(\text{Id},\gamma)$.
	\begin{lemm}\label{conjugacy.one}
		Consider a  surface  $S$ of genus $k>1$. If $k-1$ is not a positive multiple of $g-1$ then $\mathcal S^*(k)=\emptyset$. 
		
		If $k-1$ is a positive multiple of $g-1$ then the assignment 
		$$(p,\gamma)\in  C_g(k)\times H_1(S,\Z)\mapsto [\iota(p,\gamma)_*(\pi_1(S))]\in \mathcal S^*(k)$$
		is surjective. We have   $[\iota(p,\gamma)_*(\pi_1(S))]= [\iota(q,\beta)_*(\pi_1(S))]$ if and only if  $p=q$ and $\beta=T_*(\gamma)$ for some $T\in D(p)$.
	\end{lemm}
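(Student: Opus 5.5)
We study a surface subgroup $G=\iota_*(\pi_1(S))\subset \pi_1(\Sigma)\times\Z$, where $S$ has genus $k>1$, through the two projections $\pi_1:G\to\pi_1(\Sigma)$ and $\pi_2:G\to\Z$.

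\emph{Step 1: $\pi_1$ is injective with finite-index image.} The center $\{1\}\times\Z$ of $\pi_1(\Sigma)\times\Z$ meets $G$ in a central subgroup of $G$. Surface groups of genus $k>1$ have trivial center, so $G\cap(\{1\}\times\Z)$ is trivial. Since this intersection is exactly the kernel of $\pi_1|_G$, the map $\pi_1|_G$ is injective.

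Its image $H\subset\pi_1(\Sigma)$ is then isomorphic to a genus $k$ surface group, so it is finitely generated. A finitely generated subgroup of a surface group is either of finite index or free. Since $H$ is not free, it has finite index $d$. Hence $H$ corresponds to a degree $d$ cover $p:S'\to\Sigma$, and $S'$ has genus $k$ because $\pi_1(S')\cong H$. Euler characteristic gives $2-2k=d(2-2g)$, so $k-1=d(g-1)$. If $k-1$ is not a multiple of $g-1$, no such $G$ exists, and $\mathcal S^*(k)=\emptyset$.

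\emph{Step 2: surjectivity.} Identify $S'$ with $S$ through a homeomorphism inducing the isomorphism $\pi_1(S)\to G\to H$. After precomposing with a mapping class, $p$ represents one of the chosen $p_i\in C_g(k)$, up to conjugation. The second coordinate $\pi_2\circ(\pi_1|_G)^{-1}$ is a homomorphism $\pi_1(S)\to\Z$, that is, a class in $H^1(S,\Z)=PD(H_1(S,\Z))$. Call it $PD(\gamma)$. Homomorphisms $\pi_1(S)\to\pi_1(\Sigma\times S^1)$ determine maps up to homotopy, because the target is aspherical. So $\iota$ is homotopic to $\iota(p,\gamma)$ up to a homeomorphism of $S$, and conjugacy classes of images match.

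\emph{Step 3: the equivalence relation.} The ``if'' direction is already noted before the lemma: $\iota(p,\gamma)\simeq\iota(p,T_*\gamma)\circ T$ for $T\in D(p)$.

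For the converse, suppose the images of $\iota(p,\gamma)$ and $\iota(q,\beta)$ are conjugate. By Step 1 both maps are injective on $\pi_1$, so there is an automorphism $\psi$ of $\pi_1(S)$ intertwining the two homomorphisms up to conjugation. By Dehn--Nielsen--Baer, $\psi$ is induced by a homeomorphism $T$. Then $\iota(q,\beta)\circ T\simeq\iota(p,\gamma)$ in free homotopy, and so both coordinates are homotopic.

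The first coordinate gives $q\circ T\simeq p$. By the normalization fixed above, this forces $p=q$ and $T\in D(p)$ up to homotopy.

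The second coordinate gives $\phi(\beta)\circ T\simeq\phi(\gamma)$. By the remark above, this yields $\beta=T_*\gamma$, provided $T$ is orientation preserving.

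The main obstacle is orientation. $T$ could a priori be orientation reversing. However, $q\circ T\simeq p$ with $p,q$ orientation-preserving covers of positive degree forces $T$ to have degree $+1$, so $T$ is orientation preserving. Dehn--Nielsen--Baer handles possibly outer automorphisms.
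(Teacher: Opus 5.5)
Your proposal is correct and follows the paper's proof essentially step for step. Both arguments use injectivity of the first projection, Scott's free-or-finite-index dichotomy with the Euler characteristic count, identification of the second coordinate with a class in $H^1(S,\Z)$, and Dehn--Nielsen--Baer together with the normalization of the representatives $p_i$ for the equivalence relation; the only differences are cosmetic, namely that you kill $G\cap(\{1\}\times\Z)$ via the trivial center rather than via the absence of nontrivial cyclic normal subgroups, and you get that $T$ preserves orientation from $\deg(q\circ T)=\deg p$ rather than from $T\simeq T_0\in D(p)$.
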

	\begin{proof}
		Suppose we are given a $\pi_1$-injective map
		$$
		\iota:S\rightarrow \Sigma\times S^1,\quad \iota=(\iota_1,\iota_2),
		$$
		where $S$ has genus $k$. Let $G=\ker (\iota_1)_*$. Since $\iota$ is $\pi_1$-injective, the restriction of $(\iota_2)_*$ to $G$ is injective. Because $\pi_1(S^1)=\Z$, the group $G$ is cyclic. On the other hand, $G$ is normal in $\pi_1(S)$, and a surface group of genus $k>1$ has no nontrivial cyclic normal subgroup. Hence $G=\{1\}$, so $(\iota_1)_*$ is injective. By \cite{scott}, any finitely generated subgroup of $\pi_1(\Sigma)$ is either free or of finite index. Since $(\iota_1)_*(\pi_1(S))\cong \pi_1(S)$ is not free, it has finite index in $\pi_1(\Sigma)$. Therefore $\iota_1$ is homotopic to a covering map of degree $d>0$. Consequently $\chi(S)=d\chi(\Sigma)$, and thus $\mathcal S^*(k)=\emptyset$ if $k-1$ is not a positive multiple of $g-1$.
		
		We assume that  $k-1=d(g-1)$, $d\in\N$. After precomposing $\iota$ with a homeomorphism of $S$ we see that $\iota_1$ is homotopic to  some  $p\in C_g(k)$. After homotoping $\iota_2$ to a smooth map, choose $\theta\in S^1$ to be a regular value and set $\gamma=[\iota_2^{-1}(\theta)]\in H_1(S,\Z)$. The homotopy class of $\iota_2$  is uniquely determined by $PD(\gamma)$ and so $\iota_2$ is homotopic to $\phi(\gamma)$. Thus $\iota$ is homotopic to $\iota(p,\gamma)$ and therefore every element of $\mathcal S^*(k)$ is represented by some $\iota(p,\gamma)$.
		
		Given $a\in \pi_1(\Sigma\times S^1)$, denote by $C(a)$ the conjugation map in $\pi_1(\Sigma\times S^1)$. Suppose that $\iota(p,\gamma)_*(\pi_1(S))$ is conjugate to $\iota(q,\beta)_*(\pi_1(S))$. Thus we can find $a\in \pi_1(\Sigma\times S^1)$ so that $f=\iota(q,\beta)_*^{-1}\circ C(a)\circ \iota(p,\gamma)_*$ is an  automorphism of $\pi_1(S)$, and hence defines an element of $\mathrm{Out}(\pi_1(S))$. By the Dehn--Nielsen--Baer theorem (see \cite[Chapter~8]{farb-margalit}) there is a homeomorphism $T$ of $S$ representing this outer automorphism. Therefore $(\iota(q,\beta)\circ T)_*$ is conjugate to $C(a)\circ \iota(p,\gamma)_*$, and hence $\iota(q,\beta)\circ T$ is homotopic to $\iota(p,\gamma)$. Hence $q\circ T$ is homotopic to $p$ and $\phi(\beta)\circ T$ is homotopic to $\phi(\gamma)$.   This implies $q=p$ and that $T$ is homotopic to some $T_0\in D(p)$. In particular, $T$ is orientation-preserving, and so $\beta=T_*(\gamma)=T_{0*}(\gamma)$.
	\end{proof}
	
\begin{thm}With $\sigma$ a metric on $\Sigma$ we have
		
		\begin{equation}\label{asymptotic.stable}
			\lim_{A\to\infty}\frac{\#\{\Pi\in \mathcal S^*(g):\text{area}_{\sigma\oplus d\theta^2}(\Pi)\leq A\}}{A^{2g}}=\frac{\mu(B_1(\sigma))}{(2\pi)^{2g}}.\end{equation}

		If $k-1\geq 1$ is not a positive integral multiple of $g-1$ then $\mathcal S^*(k)=\emptyset$. 
		
		If   $k-1$ is  a positive multiple of $g-1$ then
		\begin{equation*}
			\lim_{A\to\infty}\frac{\#\{\Pi\in \mathcal S^*(k):\text{area}_{\sigma\oplus d\theta^2}(\Pi)\leq A\}}{A^{2k}}=\sum_{p\in C_g(k)}\frac{\mu(B_1(p^*\sigma))}{|D(p)|(2\pi)^{2k}}.
		\end{equation*}
	\end{thm}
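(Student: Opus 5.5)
Lemma \ref{conjugacy.one} settles the emptiness statement and reduces the counting to a lattice-point count, so the plan is to compute the minimal area of each class $[\iota(p,\gamma)]$ explicitly.

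\textbf{Step 1: area formula.} The key claim is that for $p\in C_g(k)$ and $\gamma\in H_1(S,\Z)$,
$$\text{area}_{\sigma\oplus d\theta^2}([\iota(p,\gamma)])=\sqrt{(4\pi(g-1)d)^2\cdot 0+\dots}$$
is the wrong guess. The actual claim is that the area equals $\|\,\cdot\,\|$-type data only asymptotically: I expect
$$ \|\gamma\|_{p^*\sigma}\cdot 2\pi \;\le\; \text{area}([\iota(p,\gamma)]) \;\le\; \sqrt{(2\pi\|\gamma\|_{p^*\sigma})^2+C^2},$$
with $C$ a constant (e.g.\ $C=\area_\sigma(\Sigma)\cdot d$), which suffices for the asymptotics.

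For the lower bound, let $(S,\iota)$ be any representative with $\iota=(\iota_1,\iota_2)$. Take the pointwise inequality $dA_{\iota^*(\sigma\oplus d\theta^2)}\ge |\iota_1^*\sigma|^{1/2}$-type estimate in the form $dA\ge |\nabla\iota_2|\,dA_{\iota_1^*\sigma}$-coarea. Concretely, the coarea formula applied to $\iota_2:S\to S^1$ gives
$$\text{area}(S)\ge \int_S |d\iota_2|_{\iota_1^*\sigma}\,dA_{\iota_1^*\sigma}=\int_{S^1}\text{length}_{\iota_1^*\sigma}(\iota_2^{-1}(\theta))\,d\theta .$$
This uses that the product metric's area element dominates the Jacobian of $\iota_2$ times the $\iota_1^*\sigma$-length of level sets. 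Each level set represents $\gamma$, and since $\iota_1$ is homotopic to $p$, it pushes to a cycle whose length is at least the stable norm $\|\gamma\|_{p^*\sigma}$ measured in the cover (lift the homotopy to compare with $p^*\sigma$). Hence the area is at least $2\pi\|\gamma\|_{p^*\sigma}$.

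For the upper bound, first take $\gamma$ whose stable norm is realized by a multicurve $c$, which holds for integral classes up to multiples, or else approximate. Build $\iota_2$ as a map equal to a linear coordinate in a thin collar-free way: choose a closed $1$-form $\eta$ in $PD(\gamma)$ that is nearly a calibrating form. Federer duality states that the stable norm is dual to the comass norm, so I can pick a smooth $\eta$ with $\int_S|\eta|\,dA\approx\|\gamma\|$. The graph area $\int\sqrt{1+|\eta|^2/(2\pi)^2\cdots}$ is then at most $\area(p^*\sigma)+2\pi\|\gamma\|+o(\|\gamma\|)$. The main obstacle is uniformity of the $o(\|\gamma\|)$ error in $\gamma$. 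I would handle it via the homogeneity of the stable norm: the minimal area $a(\gamma)$ is subadditive and satisfies $a(n\gamma)\le n a(\gamma)+O(1)$, so $a(\gamma)/(2\pi\|\gamma\|)\to1$ uniformly as $\|\gamma\|\to\infty$, by compactness of the unit sphere in $H_1(S,\R)$.

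\textbf{Step 2: counting.} By Lemma \ref{conjugacy.one}, $\#\{\Pi\in\mathcal S^*(k):\text{area}\le A\}=\sum_p\#\{\gamma:\text{area}\le A\}/|D(p)|$. The division by $|D(p)|$ is exact up to the classes $\gamma$ with nontrivial stabilizer in $D(p)$. These lie in the proper subspaces fixed by the nontrivial $T\in D(p)$, because a nontrivial finite-order deck map acts nontrivially on $H_1(S,\R)$; the action is faithful for $k>1$. They therefore contribute $O(A^{2k-1})$. By Step 1, the set of $\gamma$ with area at most $A$ lies between $\{\|\gamma\|_{p^*\sigma}\le (1\mp\epsilon)A/2\pi\}$ up to bounded sets. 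Counting lattice points of $H_1(S,\Z)\cong\Z^{2k}$ in dilates of the convex body $B_1(p^*\sigma)$ gives $\mu(B_1(p^*\sigma))(A/2\pi)^{2k}(1+o(1))$. Letting $\epsilon\to0$ yields both formulas, with the case $k=g$ corresponding to $p=\mathrm{Id}$ and $D=\{1\}$.
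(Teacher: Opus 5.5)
Your strategy is the paper's. The coarea formula bounds the area below by $2\pi\|\gamma\|_{p^*\sigma}$; your lift of $\iota_1$ through $p$ is equivalent to the paper's lift of the whole immersion to $S\times S^1$ via $F_p$. A graph over $p$ gives the upper bound, and you then count lattice points in dilates of $B_1(p^*\sigma)$, discarding non-free $D(p)$-orbits because they lie in proper subspaces. Your two deviations in Step 2 are fine. Bounding the number of orbits by $\#E_p(A)/|D(p)|$ plus the number of points with nontrivial stabilizer replaces Burnside's formula. The classical fact that a nontrivial deck transformation acts nontrivially on $H_1(S,\R)$ when $k>1$ replaces the paper's transfer computation $\dim V_T=2r<2k$.

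What needs repair is the upper bound in Step 1.

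\emph{The displayed estimate points the wrong way.} Take $C=d\,\area_\sigma(\Sigma)$ and let $\tau=\iota^*(\sigma\oplus d\theta^2)$. Writing $|\iota_1^*\omega_\sigma|=J_1\,dA_\tau$, one has $J_1^2+|d\iota_2|_\tau^2=1$ pointwise. Minkowski's inequality, together with $\int_S J_1\,dA_\tau\geq d\,\area_\sigma(\Sigma)$ and your own coarea bound, then gives
\[
\area_{\tau}(S)\geq\sqrt{(d\,\area_\sigma(\Sigma))^2+(2\pi\|\gamma\|_{p^*\sigma})^2}
\]
for every representative. So your display could only hold as an identity. It is neither proved nor needed, and you should drop it.

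\emph{The uniformity obstacle does not exist.} The subadditivity/compactness argument you offer for it is also not a proof. You do not show that an infimum over all immersions in a class is subadditive. Even granting that, subadditivity plus homogeneity along rays gives no uniformity over directions without an equicontinuity estimate on $a$. What you actually need is this: for each integral $\gamma$ and each $\varepsilon>0$ there is $\phi:S\to S^1$ in the class $PD(\gamma)$ with
\[
\int_S|d\phi|_{p^*\sigma}\,dA_{p^*\sigma}\leq 2\pi(\|\gamma\|_{p^*\sigma}+\varepsilon).
\]
Comass duality, which concerns sup norms, does not directly give this. The paper constructs $\phi$ explicitly:
\begin{itemize}
\item The stable norm of a nonzero integral class on a surface is realized by $\sum_j a_jc_j$ with the $c_j$ disjoint simple closed geodesics. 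No \emph{up to multiples} caveat is needed.
\item Let $\phi$ wind $a_j$ times across a thin collar of $c_j$ and be constant elsewhere.
\item Then every regular level set has length at most $\|\gamma\|+\varepsilon$, and the coarea formula gives the integral bound.
\end{itemize}
Combined with $\sqrt{1+x^2}\leq 1+x$, this yields
\[
2\pi\|\gamma\|_{p^*\sigma}\leq \area\leq 2\pi\|\gamma\|_{p^*\sigma}+\area_{p^*\sigma}(S).
\]
The additive error is uniform in $\gamma$, which is exactly what your Step 2 uses.
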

	\begin{proof}
		
		From Lemma \ref{conjugacy.one} we see that if $k-1\geq 1$ is not a positive integral multiple of $g-1$ then $\mathcal S^*(k)=\emptyset$.  We assume  first that  $k=g$ and set $\Pi(\gamma)= [\iota(\gamma)_*(\pi_1(\Sigma))]\in \mathcal S^*(g)$, $\gamma\in H_1(\Sigma,\Z)$.  This assignment is a bijection by Lemma \ref{conjugacy.one}.

		\begin{prop}\label{area.estimate} For every metric $\sigma$  on $\Sigma$ and $\gamma\in H_1(\Sigma,\Z)$
			$$2\pi||\gamma||_\sigma\leq \text{area}_{\sigma\oplus d\theta^2}(\Pi(\gamma))\leq 2\pi||\gamma||_\sigma+\text{area}_\sigma(\Sigma).$$
		\end{prop}
		\begin{proof}
			Given $(\Sigma,\iota)\in \Pi(\gamma)$ write $\iota=(\iota_1,\iota_2)$. From the proof of Lemma \ref{conjugacy.one} we see that, without loss of generality, we can assume $\iota_1$ is homotopic  to the identity and that  $\iota_2$ is homotopic to $\phi(\gamma)$. Thus for almost every $\theta\in S^1$ we have $$[\iota_2^{-1}(\theta)]=[\phi(\gamma)^{-1}(\theta)]=\gamma\in H_1(\Sigma,\Z).$$ 
			
			Set $\tau=\iota^*(\sigma\oplus d\theta^2)$. We have $|d\iota_2|_\tau\leq1$ and so we obtain from the co-area formula and the fact that along $\iota_2^{-1}(\theta)$ the metric $\tau$ agrees with $\iota_1^*\sigma$ that
			\begin{align*}
				\text{area}_{\iota^*(\sigma\oplus d\theta^2)}(\Sigma)&=\int_\Sigma dA_\tau\geq \int_\Sigma |d\iota_2|_\tau dA_\tau= \int_{S^1}\text{length}_{\tau}(\iota_2^{-1}(\theta))d\theta\\
				&=\int_{S^1}\text{length}_{\sigma}(\iota_1(\iota_2^{-1}(\theta)))d\theta \geq 2\pi||\gamma||_\sigma.
			\end{align*}
			Taking the infimum over all $(\Sigma,\iota)\in \Pi(\gamma)$ gives the lower bound.

			If $\gamma=0$, the map $x\mapsto (x,1)$ belongs to $\Pi(\gamma)$ and already gives the upper bound. We assume from now on that $\gamma\neq 0$. Choose pairwise disjoint simple closed geodesics $c_1,\ldots,c_n$ and integers $a_1,\ldots,a_n\geq 1$ so that
			$$
			\gamma=\sum_{j=1}^na_j[c_j]\quad\text{and}\quad ||\gamma||_\sigma=\sum_{j=1}^n a_jl_\sigma(c_j).
			$$
			Given $\varepsilon>0$, set $N=\sum_{j=1}^na_j$ and choose pairwise disjoint tubular neighborhoods $U_j$ of $c_j$ with orientation-preserving parametrizations
			$$
			F_j:(-1,1)\times S^1\to U_j,\quad F_j(0,S^1)=c_j,
			$$
			so that $\theta\mapsto  F_j(0,\theta)$ preserves the orientation of $c_j$ and for every $s\in (-1,1)$ the curve $F_j(s,S^1)$ has $\sigma$-length at most $l_\sigma(c_j)+\varepsilon/N$.
			
			Consider $\eta:[0,2]\to[0,1]$ a smooth non-decreasing function which is equal to  $0$ near $1$ and  $1$ near $2$. We now define a smooth map $\phi_\varepsilon:\Sigma\to S^1$ to be $1$ on $\Sigma\setminus \cup_jU_j$ and, on $U_j$, to be
			$$
			\phi_\varepsilon(F_j(s,t))=e^{2\pi i a_j\eta(s+1)}.
			$$
			Every regular level of $\phi_\varepsilon$ is then the union, over $j$, of $a_j$ curves of the form $F_j(s,S^1)$. Hence every regular level curve represents $\gamma$ and so $\phi_\varepsilon$ is homotopic to  $\phi(\gamma)$. Furthermore,  every regular level curve of $\phi_\varepsilon$ has $\sigma$-length at most
			$$
			\sum_{j=1}^na_j\left(l_\sigma(c_j)+\frac{\varepsilon}{N}\right)=||\gamma||_\sigma+\varepsilon.
			$$
			Using the co-area formula again,
			$$
			\int_\Sigma |d\phi_\varepsilon|_\sigma dA_\sigma=\int_0^{2\pi}\text{length}_{\sigma}(\phi_\varepsilon^{-1}(\theta))d\theta\leq 2\pi(||\gamma||_\sigma+\varepsilon).
			$$
			From $\iota_\varepsilon=(Id,\phi_\varepsilon)\in \Pi(\gamma)$ we obtain
			\begin{align*}
				\text{area}_{\iota_\varepsilon^*(\sigma\oplus d\theta^2)}(\Sigma)&=\int_\Sigma\sqrt{1+|d\phi_\varepsilon|_\sigma^2}dA_\sigma
				\leq \text{area}_\sigma(\Sigma)+\int_\Sigma|d\phi_\varepsilon|_\sigma dA_\sigma\\
				&
				\leq \text{area}_\sigma(\Sigma)+2\pi(||\gamma||_\sigma+\varepsilon).
			\end{align*}
			Letting $\varepsilon\to 0$ gives the upper bound.
		\end{proof}
		
		We now prove \eqref{asymptotic.stable}. We have from Proposition \ref{area.estimate} that
		\begin{multline}\label{inclusions.stable}
			\#\{\gamma\in H_1(\Sigma,\Z):2\pi||\gamma||_\sigma+\text{area}_\sigma(\Sigma)\leq  A\}\\
			\leq \#\{\Pi\in \mathcal S^*(g):\text{area}_{\sigma\oplus d\theta^2}(\Pi)\leq  A\}\\
			\leq \#\{\gamma\in H_1(\Sigma,\Z):2\pi ||\gamma||_\sigma\leq A\}.
		\end{multline}
		Denote by $B_R(0)\subset H_1(\Sigma,\R)$ the ball of radius $R$ for the stable norm. From scaling we have $\mu(B_R(0))=R^{2g}\mu(B_1(\sigma))$.  {(Recall that the measure $\mu$ was defined in the introduction following Theorem \ref{codimension.one.torus.thm}.)} A classical lattice-point-counting argument implies that
		$$
		\lim_{A\to\infty}\frac{\#\{\gamma\in H_1(\Sigma,\Z):2\pi ||\gamma||_\sigma\leq A\}}{A^{2g}}=\lim_{A\to\infty}\frac{\mu(B_{(A/2\pi)}(0))}{A^{2g}}
		=\frac{\mu(B_1(\sigma))}{(2\pi)^{2g}}.
		$$
		Likewise 	
		$$
		\lim_{A\to\infty}\frac{\#\{\gamma\in H_1(\Sigma,\Z):2\pi||\gamma||_\sigma+\text{area}_\sigma(\Sigma)\leq  A\}}{A^{2g}}
		=\frac{\mu(B_1(\sigma))}{(2\pi)^{2g}}
		$$
		and so we obtain from \eqref{inclusions.stable} that
		$$
		\lim_{A\to\infty}\frac{\#\{\Pi\in \mathcal S^*(g):\text{area}_{\sigma\oplus d\theta^2}(\Pi)\leq A\}}{A^{2g}}=\frac{\mu(B_1(\sigma))}{(2\pi)^{2g}}.$$
		The next lemma finishes the proof of Theorem	\ref{codimension.one.thm}.
		
		\begin{lemm}\label{second.stable.inequality} If   $k-1$ is  a positive multiple of $g-1$ then
			$$
			\lim_{A\to\infty}\frac{\#\{\Pi\in \mathcal S^*(k):\text{area}_{\sigma\oplus d\theta^2}(\Pi)\leq A\}}{A^{2k}}=\sum_{p\in C_g(k)}\frac{\mu(B_1(p^*\sigma))}{|D(p)|(2\pi)^{2k}}.
			$$
		\end{lemm}
		\begin{proof}
			For $p\in C_g(k)$ let $\mathcal S_p^*(k)\subset \mathcal S^*(k)$ be the subset consisting of those elements represented by $\iota(p,\gamma)$, $\gamma\in H_1(S,\Z)$. By Lemma \ref{conjugacy.one} we have the disjoint union
			$$
			\mathcal S^*(k)=\bigcup_{p\in C_g(k)}\mathcal S_p^*(k),
			$$
			and for each $p\in C_g(k)$ the assignment $\gamma\mapsto [\iota(p,\gamma)_*(\pi_1(S))]$ induces a bijection between $\mathcal S_p^*(k)$ and the set of $D(p)$-orbits in $H_1(S,\Z)$.
			
			Fix $p\in C_g(k)$. Consider the local isometry between $\sigma\oplus d\theta^2$ and $p^*\sigma\oplus d\theta^2$
			$$
			F_p:S\times S^1\rightarrow \Sigma\times S^1,\quad F_p(x,\theta)=(p(x),\theta),
			$$
			and for $\gamma\in H_1(S,\Z)$ let $\hat\iota(\gamma):S\to S\times S^1$ be given by
			$$
			\hat\iota(\gamma)(x)=(x,\phi(\gamma)(x)).
			$$
			Then $F_p\circ \hat\iota(\gamma)=\iota(p,\gamma)$. Let $\hat\Pi(\gamma)$ denote the conjugacy class induced by $\hat\iota(\gamma)$.
			
			Given $(S,\iota)\in [\iota(p,\gamma)_*(\pi_1(S))]$,  we saw in the proof of Lemma \ref{conjugacy.one} the existence of a diffeomorphism $T$ so that $\iota\circ T\simeq (p,\phi(\gamma))$. Thus we can find
			$\hat \iota\in \hat\Pi(\gamma)$ so that $F_p\circ \hat\iota=\iota\circ T$. Likewise, if $\hat \iota\in \hat\Pi(\gamma)$ then $F_p\circ \hat\iota$ is in $[\iota(p,\gamma)_*(\pi_1(S))]$. For this reason
			\begin{equation}\label{isometry.same}
				\text{area}_{\sigma\oplus d\theta^2}([\iota(p,\gamma)_*(\pi_1(S))])=\text{area}_{p^*\sigma\oplus d\theta^2}(\hat\Pi(\gamma)).
			\end{equation}
			Given $A>0$, set
			\begin{align*}
				E_p(A)&=\{\gamma\in H_1(S,\Z):\text{area}_{\sigma\oplus d\theta^2}([\iota(p,\gamma)_*(\pi_1(S))])\leq A\},\\
				N_p(A)&=\{\Pi\in \mathcal S^*_p(k):\text{area}_{\sigma\oplus d\theta^2}(\Pi)\leq A\}.
			\end{align*}
			Every element of $D(p)$ is represented by an isometry of $p^*\sigma$. Thus we see from \eqref{isometry.same} that the set $E_p(A)$ is invariant under the action of $D(p)$. 
			
			Although Proposition \ref{area.estimate} is stated for $(\Sigma,\sigma)$, the same statement applies to $(S,p^*\sigma)$. Therefore, arguing exactly as in the proof of \eqref{asymptotic.stable}, we obtain
			\begin{align*}
				\lim_{A\to\infty}\frac{\# E_p(A)}{A^{2k}}
				&=\lim_{A\to\infty}\frac{\#\{\gamma\in H_1(S,\Z):\text{area}_{p^*\sigma\oplus d\theta^2}(\hat\Pi(\gamma))\leq A\}}{A^{2k}}\\
				&=\frac{\mu(B_1(p^*\sigma))}{(2\pi)^{2k}}.
			\end{align*}
			
			Lemma \ref{conjugacy.one} implies that $\#N_p(A)$ is equal to the number of $D(p)$-orbits in $E_p(A)$. By  Burnside's lemma,
			\[
			\#N_p(A)=\frac{1}{|D(p)|}\sum_{T\in D(p)}\#E_p^T(A),\qquad E_p^T(A)=\{\gamma\in E_p(A):T_*(\gamma)=\gamma\}.
			\]
			
			We have $\#E_p^{Id}(A)=\#E_p(A)$. Suppose now that $T\neq Id$. We argue that $E_p^T(A)$ is contained in a vector space of dimension lower than $2k$ and hence its density will be $o(A^{2k})$. Let $m$ be the order of $T$ and let
			$$
			q:S\rightarrow S/\langle T\rangle
			$$
			be the quotient map. Since $T$ is a non-trivial deck transformation, the action of $\langle T\rangle$ on $S$ is free and $S/\langle T\rangle$ has genus $r<k$, with $k-1=m(r-1)$. Set
			$$
			V_T=\ker(T_*-Id)\subset H_1(S,\R).
			$$
			Let
			$
			\tau:H_1(S/\langle T\rangle,\R)\rightarrow H_1(S,\R)
			$
			be the transfer map. By \cite[\S3.G]{hatcher} (the same discussion applies in homology), $\tau$ is injective, and its image is precisely $V_T$. Hence
			$
			\dim V_T=2r<2k.
			$
			
			Because $T_*$ preserves $H_1(S,\Z)$, the subgroup
			$
			\Lambda_T=V_T\cap H_1(S,\Z)
			$
			is a lattice in $V_T$. Let $\mu_T$ be the Lebesgue measure on $V_T$, normalized so that the torus $V_T/\Lambda_T$ has volume $1$. The restriction of $||\cdot||_{p^*\sigma}$ to $V_T$ is a norm. Since $E_p^T(A)\subset \Lambda_T$, Proposition \ref{area.estimate}, applied to $(S,p^*\sigma)$, gives
			$$
			E_p^T(A)\subset \{\gamma\in \Lambda_T:2\pi||\gamma||_{p^*\sigma}\leq A\}.
			$$
			Applying the same lattice-point-counting argument as in the proof of \eqref{asymptotic.stable}, but now in the vector space $V_T$ with lattice $\Lambda_T$ and measure $\mu_T$, we obtain
			\[
			\#E_p^T(A)=O(A^{\dim V_T})=O(A^{2r})=o(A^{2k}).
			\]
			Dividing the Burnside formula by $A^{2k}$, using  $E_p^{Id}(A)=E_p(A)$, and letting $A\to\infty$, we obtain
			\[
			\lim_{A\to\infty}\frac{\#N_p(A)}{A^{2k}}=\lim_{A\to\infty}\frac{\#E_p(A)+o(A^{2k})}{|D(p)|A^{2k}}=\frac{\mu(B_1(p^*\sigma))}{|D(p)|(2\pi)^{2k}}.
			\]
			Summing over the disjoint union $\mathcal S^*(k)=\cup_{p\in C_g(k)}\mathcal S_p^*(k)$ yields the result.
		\end{proof}
	\end{proof}
	
\section{Preliminaries {for the $\Sigma\times \Sigma$ case}}\label{notation}

	Consider $\text{Diff}_+(\Sigma)$ (resp.\ $\text{Diff}_+(\Sigma)_0$), the group of all orientation-preserving diffeomorphisms (resp.\ those homotopic to the identity). The mapping class group of $\Sigma$ is
	\begin{equation}\label{mg.defi}
		\text{Map}(\Sigma)=\text{Diff}_+(\Sigma)/\text{Diff}_+(\Sigma)_0.
	\end{equation}
	{The space of all smooth nonpositively curved metrics on $\Sigma$, modulo pull-back by $\text{Diff}_+(\Sigma)_0,$ is denoted by $\mathcal M_{\leq 0}(\Sigma)$.} The Teichm\"uller space $\mathcal T(\Sigma)$ denotes the space of all hyperbolic metrics on $\Sigma$ modulo $\text{Diff}_+(\Sigma)_0$. The moduli space $\mathcal M(\Sigma)$ denotes the space of all hyperbolic metrics on $\Sigma$ modulo the action of $\text{Diff}_+(\Sigma)$. Both are finite-dimensional spaces of dimension $6g-6$. We often identify  a nonpositively curved metric $\tau$, a hyperbolic metric $\sigma$, and $\phi\in\text{Diff}_+(\Sigma)$  with its equivalence class in $\mathcal M_{\leq 0}(\Sigma)$, $\mathcal T(\Sigma)$, and  $\text{Map}(\Sigma)$.
	
	Given $\phi\in\text{Map}(\Sigma)$ and $\tau\in\mathcal M_{\leq 0}(\Sigma)$, $\phi^*\tau$ and $\phi_*\tau=(\phi^{-1})^*\tau$ are well-defined elements of $\mathcal M_{\leq 0}(\Sigma)$. This induces left and right actions of $\text{Map}(\Sigma)$ on $\mathcal T(\Sigma)$.	
	
	Let $\mathcal S$ denote the set of conjugacy classes of non-trivial simple closed curves in $\Sigma$. Given a negatively curved metric on $\Sigma$, $\mathcal S$ is in one-to-one correspondence with the set of simple closed geodesics in that metric. For this reason, we often abuse notation and refer to an element of $\mathcal S$ as either a simple closed geodesic $\gamma$ or its conjugacy class $[\gamma]$. The simple length spectrum of a  metric $\sigma$ on $\Sigma$ is the function
	$$
	l_\sigma:\mathcal S\rightarrow (0,\infty),\quad
	l_\sigma([\gamma])=\inf\{l_\sigma(c): c \text{ is homotopic to }\gamma\}.
	$$
	When the metric is negatively curved, $l_\sigma([\gamma])$ is realized by a unique geodesic in $[\gamma]$.
	The group $\text{Map}(\Sigma)$ acts naturally on $\mathcal S$ via $[\gamma]\mapsto [\phi(\gamma)]$.
	
	We use $\hat \Sigma$ to denote the universal cover of $\Sigma$ and $\pi:\hat \Sigma\rightarrow \Sigma$ the covering map.
	
\subsection{Geodesic currents}\label{currents.section}
	The reference is \cite{bonahon2}. {Fix a hyperbolic metric on $\Sigma$. The spaces constructed are independent of the hyperbolic metric chosen.}
	
	Let $G(\hat \Sigma)$ denote the set of all complete geodesics in $\hat \Sigma$ with the compact-open topology. The group $\pi_1(\Sigma)$ acts naturally on $G(\hat \Sigma)$. The space of geodesic currents $\mathcal C(\Sigma)$ consists of all locally finite $\pi_1(\Sigma)$-invariant Radon measures on $G(\hat \Sigma)$. We consider the weak$^*$ topology on $\mathcal C(\Sigma)$.
	
	Given a closed geodesic $\gamma$ on $\Sigma$, $\delta_\gamma$ denotes the counting measure  {supported on  the set of lifts of $\gamma$ to $\hat \Sigma$, counted with multiplicity.} The current $\delta_\gamma$ depends only on the conjugacy class $[\gamma]$ of $\gamma$.  {We have from  Proposition~2 in \cite{bonahon2}
	\begin{equation}\label{prop2.bonahon}
	\mathcal C(\Sigma)=\overline{\{t\delta_\gamma:\,t>0,\, \gamma \text{ a closed geodesic}\}}.
	\end{equation}}
There is a continuous bilinear pairing, called {\em intersection},
	$$
	i:\mathcal C(\Sigma)\times \mathcal C(\Sigma)\to[0,\infty),
	$$
	such that $i(\delta_\gamma,\delta_\beta)$ is equal to the unsigned count, {with multiplicity}, of intersection points $\#(\gamma\cap \beta)$ when  $\gamma$ is transverse to $\beta$ and $\gamma$ and $\beta$ minimize the number of intersection points (counted without sign) over all curves in their respective {free homotopy} classes that intersect  transversely. Also, $i(\delta_\gamma,\delta_\gamma)=0$ if and only if  the image of the closed geodesic $\gamma$ is simple.
	
	Each negatively curved metric $\rho$ gives rise to a current $\lambda_\rho \in \mathcal C(\Sigma)$, called the {\em Liouville current} associated to $\rho$. The assignment $\rho\mapsto\lambda_\rho$ is invariant under the action of $\text{Diff}_+(\Sigma)_0$. Its defining property is that, if $\gamma$ is a closed geodesic, then
	\[
	i(\lambda_\rho,\delta_{\gamma}) = l_\rho([\gamma]).
	\]
	In \cite{croke2}, the definition of $\lambda_\rho$ was extended to nonpositively curved metrics in a way that the above defining property still holds.
	Accordingly, for any $\mu \in \mathcal C(\Sigma)$, it is customary to use the notation
	\[
	l_\rho(\mu) = i(\lambda_\rho,\mu).
	\]
	{If $\rho$ is hyperbolic, then} $i(\lambda_\rho,\lambda_\rho)=\pi^2|\chi(\Sigma)|$ \cite[Proposition 15]{bonahon2}.

	The group $\text{Map}(\Sigma)$ induces an action on $\mathcal C(\Sigma)$ via pushforward of measures \cite[Chapter 3]{souto-book}. It has the property that $\phi_*\delta_{[\beta]}=\delta_{[\phi(\beta)]}$ for every conjugacy class $[\beta]$. It is customary to abuse notation and denote $\phi_*$ simply by $\phi$ and $\delta_{[\beta]}$ by $\delta_\beta$. The intersection pairing between geodesic currents is preserved by this action. From these properties we have $\phi(\lambda_\tau)=\lambda_{\phi_*\tau}$ for all nonpositively curved metrics $\tau$ and $\phi\in \text{Map}(\Sigma)$. Indeed, for every conjugacy class $[\beta]$ we have
	\begin{align*}
		i(\phi(\lambda_\tau),\delta_{\beta})
		&=i(\lambda_\tau,\phi^{-1}(\delta_{\beta}))
		= i(\lambda_\tau,\delta_{[\phi^{-1}(\beta)]})
		=l_\tau([\phi^{-1}(\beta)])\\
		&=l_{\phi_*\tau}([\beta])
		=i(\lambda_{\phi_*\tau},\delta_{\beta}).
	\end{align*}
	
	The set of all {\em measured geodesic laminations} is denoted by
	\begin{equation}\label{ML.defi}
		\mathcal{ML}(\Sigma)=\{\lambda\in\mathcal C(\Sigma): i(\lambda,\lambda)=0\}.
	\end{equation}
	There is a canonical homeomorphism (i.e., metric-independent) between $\mathcal{ML}(\Sigma)$ and the space of measured foliations \cite[Proposition 17]{bonahon2}. From this we can derive several consequences.
	
	The first is that if $\mu,\nu\in\mathcal{ML}(\Sigma)$ satisfy $i(\mu,\delta_\gamma)=i(\nu,\delta_\gamma)$ for every simple closed geodesic $\gamma\in\mathcal S$, then $\mu=\nu$ \cite[Theorem~6.13]{fathi-book}. The second is that, by Thurston's Compactification Theorem \cite[Section~8]{fathi-book}, the space $\mathcal{ML}(\Sigma)$ is homeomorphic to $\R^{6(g-1)}$. The third is that the set $\{t\delta_\gamma:\,t>0, \gamma\in\mathcal S\}$ is dense in $\mathcal{ML}(\Sigma)$ (see Proposition~6.18 in \cite{fathi-book}).
	
	In his study of $\mathcal{ML}(\Sigma)$, Thurston \cite[Chapters $8$ and $9$]{thurston} showed that $\mathcal{ML}(\Sigma)$ has a natural piecewise linear integral structure. Hence $\mathcal{ML}(\Sigma)$ admits charts given by convex cones with finitely many faces. For our purposes we need the following: 
	
	There are two convex open cones $V, V^*\subset \R^{6(g-1)}$ and homeomorphisms onto their images $T:V\rightarrow \mathcal{ML}(\Sigma)$, $T^*:V^*\rightarrow \mathcal{ML}(\Sigma)$ which commute with scaling and such that $i(T(x),T^*(y))=x\cdot y>0$.  This corresponds to Proposition 9.7.4 in \cite{thurston} or, for a rigorous treatment, Section $3$ in \cite{penner} (more specifically, \cite[page $197$]{penner}).

	The following facts will be used without further justification. If $\nu\in \mathcal C(\Sigma)$ is such that every complete geodesic in $\hat \Sigma$ intersects transversely some geodesic in the support of $\nu$,  the set
	\begin{equation}\label{compactness}
		\{\tau\in\mathcal C(\Sigma): i(\tau,\nu)\leq K\}
	\end{equation}
	is compact in $\mathcal C(\Sigma)$ (Proposition~4 in \cite{bonahon2}). Thus the map $\rho\in\mathcal T(\Sigma)\mapsto i(\lambda_\sigma,\lambda_\rho)$ is proper.
	
	If $\{\mu_i\}_{i\in\N}$ is a sequence in $\mathcal C(\Sigma)$ such that for every simple closed geodesic $\gamma$ we have
	\[
	\sup_{i\in\N} i(\mu_i,\delta_{\gamma})<\infty,
	\]
	then one can extract a convergent subsequence. Indeed, one may choose finitely many simple closed geodesics $\gamma_1,\ldots,\gamma_N$ with the property that every closed geodesic intersects transversely at least one of them. The conclusion then follows by applying \eqref{compactness} with
	\[
	\nu=\sum_{j=1}^N \delta_{\gamma_j}.
	\]
	
	If $\{\sigma_i\}_{i\in\N}$ is a divergent sequence in $\mathcal T(\Sigma)$ and $\{t_i\}_{i\in\N}$ is a sequence of positive scalars such that $\lambda_{\sigma_i}/t_i\to \mu\in \mathcal C(\Sigma)$, then $\mu\in \mathcal{ML}(\Sigma)$. The reason is that we must have $t_i\to\infty$ and $i(\lambda_{\sigma_i},\lambda_{\sigma_i})=\pi^2|\chi(\Sigma)|$.

\subsection{The mapping class group}\label{mpg.subsection}
	The group $\text{Map}(\Sigma)$ contains several important elements. {Given a simple closed geodesic $\alpha$,   $D_\alpha\in \text{Map}(\Sigma)$ denotes a Dehn twist about $\alpha$ and is defined as the identity outside a tubular neighborhood of $\alpha$ and as a full (left) twist inside the  tubular neighborhood of $\alpha$ (see \cite[Section~3.1.1]{farb-margalit}). The element $D_\alpha\in \text{Map}(\Sigma)$ depends only on $[\alpha]$ and on the orientation chosen for $\Sigma$.}
	
	We argue next that the Dehn twist $D_\alpha\in \text{Map}(\Sigma)$ satisfies 
	\begin{equation}\label{dehn.twist.limit}
		\lim_{n\to \infty}\frac{D^n_\alpha(\tau)}{n}=i(\tau,\delta_\alpha)\delta_\alpha.
	\end{equation}
	This follows from the proof of Proposition A.1 in \cite{fathi-book}, where the following inequality
	$$|i(D^n_\alpha(\delta_\gamma),\delta_\beta)-ni(\delta_\alpha,\delta_\gamma)i(\delta_\alpha,\delta_\beta)|\leq i(\delta_\gamma,\delta_\beta), \quad n\in\N$$
	is proved for every pair of simple closed geodesics $\gamma$ and $\beta$. An inspection of the proof shows that $\gamma$ does not need to be simple (the argument is local and takes place in a small tubular neighborhood of $\alpha$). Using \eqref{prop2.bonahon} we obtain that for all $\tau \in \mathcal{C}(\Sigma)$,
	$$|i(D^n_\alpha(\tau),\delta_\beta)-ni(\delta_\alpha,\tau)i(\delta_\alpha,\delta_\beta)|\leq i(\tau,\delta_\beta), \quad n\in\N.$$
	
	Thus,  for every simple closed geodesic $\beta$, 	
	\begin{equation}\label{dehn.twist.limit2}
		\lim_{n\to\infty}i(D^n_\alpha(\tau),\delta_\beta)/n=i(\delta_\alpha,\tau)i(\delta_\alpha,\delta_\beta).\end{equation}
	Every convergent subsequence of $\{D^n_\alpha(\tau)/n\}_n$  must converge to an  element of $\mathcal{ML}(\Sigma)$ because its self-intersection is $i(\tau,\tau)/n^2$.
	Combined with  \eqref{dehn.twist.limit2}, this implies  \eqref{dehn.twist.limit}.

	Another important class of elements in $\text{Map}(\Sigma)$ is given by pseudo-Anosov diffeomorphisms. For each such map $\phi\in \text{Map}(\Sigma)$ we can find two elements $\tau_+,\tau_-\in \mathcal{ML}(\Sigma)$ so that $i(\tau_-,\tau_+)>0$ and a constant $c>1$ such that
	$$\phi(\tau_-)=c^{-1}\tau_-\quad\mbox{and}\quad \phi(\tau_+)=c\tau_+.$$
	They can be seen as analogues of conformal maps in the unit ball, with a ``south pole'' and ``north pole''. We need two properties. The first  was stated by Thurston in \cite{thurston3}, and a proof can be found in \cite[Lemma~A.4]{ivanov}. It says that if $\lambda\in \mathcal{ML}(\Sigma)$ is not a multiple of either $\tau_-$ or $\tau_+$ then
	\begin{equation}\label{pseudo.anosov}
		\lim_{n\to\infty}\frac{\phi^n(\lambda)}{c^n}=\frac{i(\tau_-,\lambda)}{i(\tau_-,\tau_+)}\tau_+\quad\mbox{and}\quad\lim_{n\to\infty}\frac{\phi^{-n}(\lambda)}{c^n}=\frac{i(\tau_+,\lambda)}{i(\tau_-,\tau_+)}	\tau_-.
	\end{equation}
	The second property says that the set of pairs $(\tau_-,\tau_+)\in \mathcal{ML}(\Sigma)\times \mathcal{ML}(\Sigma)$ which arise as the ``south pole'' and ``north pole'' for some pseudo-Anosov diffeomorphism is dense in $\mathcal{ML}(\Sigma)\times \mathcal{ML}(\Sigma)$. This can be found in Proposition $4.6$ of \cite{hamenstadt}.

\subsection{Harmonic maps and Measured foliations} The references for this section are   \cite{minsky}  and \cite{wolf1}.

{Consider $\tau$ a smooth metric on $\Sigma$ and  $\rho \in \mathcal M_{\leq 0}(\Sigma)$. Given  a smooth map $w:\Sigma\rightarrow \Sigma$, the energy is defined as 
$$E_{\tau,\rho}(w)=\frac 1 2\int_\Sigma|dw|^2_{\rho}dA_\tau,$$
where
$$
	|dw|^2_\rho(p)={\rho(dw_{p}(e_1),dw_{p}(e_1))+\rho(dw_{p}(e_2),dw_{p}(e_2))}
	$$
	where $\{e_1,e_2\}$ is a $\tau$-orthonormal basis of $T_p\Sigma$.

There is a unique harmonic diffeomorphism $w:(\Sigma,\tau)\rightarrow (\Sigma,\rho)$ homotopic to the identity  \cite[Corollary, page 271]{schoen-yau-univalent}. This induces a   holomorphic quadratic differential $\Phi(\rho)$, called {\em Hopf differential}, on $\Sigma$ which corresponds to the $(2,0)$-part of $w^*\rho$ with respect to the complex structure induced by $\tau$.  With respect to local complex coordinates $z=x+iy$ induced by $\tau$ on an open set of  $\Sigma$ 
$$
\Phi(\rho)=\Phi(z)dz^2=\left(\frac 1 4(|\partial_x w|_\rho^2-|\partial_y w|_\rho^2)-\frac i 2\rho(\partial_x w,\partial_y w)\right)dz^2.
$$
When there is no risk of confusion we denote $\Phi(\rho)$ by $\Phi$.

There are several identities relating the pointwise energy of $w$, its jacobian, and $\Phi$. For our purposes we need (see $(3.4)$ in \cite{minsky})
\begin{equation}\label{ejphi}
E_{\tau,\rho}(w)\leq 2||\Phi||+\area_\rho(\Sigma)\quad \text{where }||\Phi||=\int_\Sigma|\Phi|dA_\tau.
\end{equation}

Similar to the space of measured geodesic laminations $\mathcal{ML}(\Sigma)$ there is the space of measured foliations $\mathcal{MF}(\Sigma)$, whose objects $\mu=(\mathcal F,|dv|)$ are a foliation $\mathcal F$ of $\Sigma$ (possibly singular at a finite set of points) and a transverse measure $|dv|$. The reader can consult \cite[Section 11.3]{kapovich} for the precise definitions. There is a concept of ``straightening'' the leaves which gives a homeomorphism between  $\mathcal {MF}(\Sigma)$ and $\mathcal {ML}(\Sigma)$ \cite[11.8-11.9]{kapovich}. Under this homeomorphism, for every closed geodesic $\gamma$, (see \cite[Chapter~9, Prop. 2.21]{martelli})
\begin{equation}\label{foliation.lamination}
i(\mu,\delta_\gamma)=\inf_{c\in [\gamma]}\int_{S^1}|dv(c'(\theta))|d\theta.
\end{equation}

Away from the {finitely many} zeroes of $\Phi=\Phi(\rho)\neq 0$ we can find complex coordinates $z=x+iy$ (called {\em flat coordinates}) so that $\Phi=dz^2=dx^2-dy^2+2idxdy$. The curves with $x$ constant form a vertical foliation which extends to a foliation of $\Sigma$ that is singular at the zeroes of $\Phi$.  The  transverse measure $|dx|$ makes this a measured foliation denoted by $\mu_V(\Phi)$. Likewise, the curves with $y$ constant form a horizontal foliation and the transverse measure $|dy|$ makes it a measured foliation $\mu_H(\Phi)$.

The intersection between  the vertical and horizontal foliations recovers the norm of $\Phi$. More precisely, (see \cite[p.~22]{mirzakhani-earthquake})
\begin{equation}\label{vert.hor.inter}
i(\mu_V(\Phi),\mu_H(\Phi))=||\Phi||.
\end{equation}
The next lemma  is proven in  \cite[Lemma 4.6]{wolf1}.  We include its proof because a different normalization of $\mu_V(\Phi)$ is used. 
\begin{lemm}\label{wolf.lemma4.6} For every $\nu\in \mathcal C(\Sigma)$ we have
$$2i(\mu_V(\Phi),\nu)\leq i(\lambda_\rho,\nu).$$
\end{lemm}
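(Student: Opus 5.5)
By the density statement \eqref{prop2.bonahon}, the continuity of $i$, and bilinearity, it is enough to prove the inequality for $\nu=\delta_\gamma$ with $\gamma$ a closed geodesic. Both sides scale the same way under $t>0$, so the general case follows by passing to limits of $t\,\delta_\gamma$. For such $\gamma$, the right-hand side equals $l_\rho([\gamma])$ by the defining property of the Liouville current (extended to nonpositively curved $\rho$ in \cite{croke2}). By \eqref{foliation.lamination}, the left-hand side equals
$$2\inf_{c\in[\gamma]}\int_{S^1}|dx(c'(\theta))|\,d\theta,$$
where $x$ is the real part of a flat coordinate of $\Phi$. It therefore suffices to find one curve $c$ freely homotopic to $\gamma$ whose vertical measure is at most $\tfrac12 l_\rho([\gamma])$.

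To build $c$, use the harmonic diffeomorphism $w:(\Sigma,\tau)\to(\Sigma,\rho)$, which is homotopic to the identity. Let $\gamma_\rho$ be a $\rho$-geodesic representative of $[\gamma]$, of length $l_\rho([\gamma])$, and set $c=w^{-1}(\gamma_\rho)$. This curve lies in $[\gamma]$. The key pointwise inequality I would prove is that, away from the zeros of $\Phi$, for every tangent vector $v$,
$$|w_*v|_\rho\geq 2|dx(v)|,$$
with $x$ the flat coordinate normalized by $\Phi=dz^2$. Granting this, we get
$$\int|dx(c')|\leq \tfrac12\int|w_*c'|_\rho=\tfrac12\,\text{length}_\rho(\gamma_\rho)=\tfrac12\,l_\rho([\gamma]).$$
The zeros of $\Phi$ are finite in number, and a small perturbation of $\gamma_\rho$ (changing its length by an arbitrarily small amount) avoids them, so they cause no trouble.

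The pointwise inequality is the main step, and the factor $2$ comes entirely from the normalization of $\Phi$. In a flat coordinate $z=x+iy$ we have $\Phi=dz^2$, and the formula defining $\Phi$ gives
$$\tfrac14\bigl(|\partial_xw|_\rho^2-|\partial_yw|_\rho^2\bigr)=1,\qquad \rho(\partial_xw,\partial_yw)=0.$$
Hence $|\partial_xw|_\rho^2=4+|\partial_yw|_\rho^2\geq 4$, and $w_*\partial_x$, $w_*\partial_y$ are $\rho$-orthogonal. For $v=a\partial_x+b\partial_y$ this yields
$$|w_*v|_\rho^2=a^2|\partial_xw|_\rho^2+b^2|\partial_yw|_\rho^2\geq 4a^2=4|dx(v)|^2.$$
Note that the formula for $\Phi$ is invariant under conformal change of $\tau$, so using flat coordinates is legitimate. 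I expect the only delicate points to be two bookkeeping checks: confirming that the horizontal/vertical convention and the measure $|dx|$ used for $\mu_V(\Phi)$ match the factor $\tfrac14$ in the paper's definition of $\Phi$, which is where the paper's normalization differs from that of \cite{wolf1}; and justifying the use of \eqref{foliation.lamination} and the density step for general currents $\nu$.
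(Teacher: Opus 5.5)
Your proof is correct and follows the paper's argument: you reduce to $\nu=\delta_\gamma$ via \eqref{prop2.bonahon}, take $c=w^{-1}(\gamma_\rho)$, use the pointwise bound $w^*\rho\geq 4\,dx^2$ away from the zeros of $\Phi$, and conclude with \eqref{foliation.lamination}. The only difference is that you derive $w^*\rho=4\,dx^2+|\partial_y w|_\rho^2(dx^2+dy^2)\geq 4\,dx^2$ directly from the coordinate formula for $\Phi$, whereas the paper cites \cite[(3.3)]{minsky} for this bound.
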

\begin{proof}
Consider $c$ a curve homotopic to a closed {$\tau$-}geodesic $\gamma$ so that $w(c)$ is a $\rho$-geodesic homotopic to $\gamma$. Using flat coordinates $z=x+iy$ under which $\Phi=dz^2$ we have from \cite[(3.3)]{minsky} that
$w^*\rho\geq 4dx^2.$
Thus $$|c'(\theta)|_{w^*\rho}\geq 2|dx(c'(\theta))|\quad\text{if}\quad \Phi_{|c(\theta)}\neq 0.$$   We obtain from \eqref{foliation.lamination} that
$$i(\lambda_\rho,\delta_\gamma)=l_{w^*\rho}(c)=\int_{S^1}|c'(\theta)|_{w^*\rho}d\theta\geq 2 \int_{S^1}|dx(c'(\theta))|d\theta\geq2i(\mu_V(\Phi),\delta_\gamma).$$
The desired result follows from \eqref{prop2.bonahon}.
\end{proof}}

\subsection{Minimal surfaces  in $\Sigma\times \Sigma$}\label{top.argument}
%Given $\Pi\in  \mathcal L^*(g)$, it follows from the proof of \cite[Theorem 1]{lee} that there is $f\in \text{Diff}(\Sigma)$ so that $ \text{area}_{h}(\Pi)= \text{area}_{h}(\text{graph}(f))$.

%Y. I. Lee showed in \cite[Theorem 1]{lee}\footnote{The existence part only needs the metrics to have nonpositive curvature.} the existence of a minimal diffeomorphism  with the property that its graph minimizes area in its homotopy class. More precisely, let $\Pi_0$ denote the  homotopy class of the diagonal map $x\in\Sigma\mapsto(x,x)\in \Sigma\times \Sigma$. There is $f\in \text{Diff}(\Sigma)$ so that $$A(\sigma,\rho)=\text{area}_{\sigma\oplus\rho}(\text{graph}(f))=\inf\{\text{area}_{\sigma\oplus\rho}(S):S\in \Pi_0\}.$$

% Say that  \iota:T^2\rightarrow \Sigma\times \Sigma is a  pi_1 injective harmonic map, Sigma has genus\geq 2,  and we consider a product metric \sigma\oplus\rho on \Sigma\times \Sigma, where \rho and \sigma are nonpositively curved. Why are there closed geodesics alpha (for sigma) and beta (for rho) so that iota(T^2)=\alpha\times\beta? is it because if \iota_1:T^2\to (\Sigma,\sigma)is a harmonic map then the image is a closed geodesic? does the curvature of sigma or rho play a role?

{We consider a   nonpositively curved metric $h$ on $\Sigma\times\Sigma$ which must be isometric to $\sigma\oplus\rho$, where $\rho, \sigma\in \mathcal M_{\leq 0}(\Sigma)$. }

	{Let $\mathcal C^*$ denote the set of  primitive conjugacy classes of  closed curves in $\Sigma$, where each class is identified with its inverse.}
	\begin{lemm}\label{top.argument.genus1}
	For every $\Pi\in \mathcal L^*(1)$ there  are unique $$(\alpha(\Pi),\beta(\Pi))=(\alpha,\beta)\in \mathcal C^*\times  \mathcal C^*$$ and $d(\Pi)=d\in\N$ so that $\text{area}_{h}(\Pi)=dl_\sigma(\alpha)l_\rho(\beta)$. Moreover, given $d\in\N$ and $(\alpha,\beta)\in \mathcal C^*\times  \mathcal C^* $ we have
		$$\#\{\Pi\in \mathcal L^*(1): \alpha(\Pi)=\alpha, \beta(\Pi)=\beta, d(\Pi)=d\}\leq d^2.$$
	\end{lemm}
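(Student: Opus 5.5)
I will follow the proof of Lemma \ref{torus.cover}, with Eells--Sampson providing rigidity and the fundamental group structure of $\Sigma\times\Sigma$ controlling the image. Fix $\Pi\in\mathcal L^*(1)$ and let $\iota=(\iota_1,\iota_2):T^2\to\Sigma\times\Sigma$ be a conformal area-minimizing $\pi_1$-injective map in $\Pi$. Such a map exists by Schoen--Yau, since the target is nonpositively curved and $\iota_*$ is injective. It is harmonic, and by Eells--Sampson (Bochner formula on a flat domain, nonpositive curvature target) it is totally geodesic, with $|d\iota|$ constant. The components $\iota_1,\iota_2$ are then harmonic maps from the flat torus into nonpositively curved surfaces. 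Each $(\iota_j)_*(\Z^2)$ is an abelian subgroup of $\pi_1(\Sigma)$, hence cyclic. Note that $\sigma$ and $\rho$ are only nonpositively curved, but $\pi_1(\Sigma)$ is still hyperbolic, so its abelian subgroups are cyclic.

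Since $\iota_*$ is injective and $\Z^2\hookrightarrow \langle a\rangle\times\langle b\rangle\subset\pi_1(\Sigma)\times\pi_1(\Sigma)$, both cyclic groups must be infinite. So $(\iota_1)_*(\Z^2)\subset\langle a\rangle$ and $(\iota_2)_*(\Z^2)\subset\langle b\rangle$ with $a,b$ primitive. Let $\alpha,\beta\in\mathcal C^*$ be the primitive classes of $a,b$. A harmonic map from a torus into a nonpositively curved surface with nontrivial cyclic image in $\pi_1$ has image in a closed geodesic of that class. In nonpositive curvature this geodesic may be one of a flat cylinder of parallel geodesics, and then $\iota_j$ maps into one of them. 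I will handle this as in Lemma \ref{torus.cover}: the totally geodesic condition gives that $\iota_j$ is a linear map onto the geodesic circle. Hence $\iota(T^2)\subset\alpha\times\beta$, a flat torus of area $l_\sigma(\alpha)l_\rho(\beta)$.

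The group $\iota_*(\Z^2)$ has rank two inside $\pi_1(\alpha\times\beta)\cong\Z^2$, so it has finite index $d$. Therefore $\iota:T^2\to\alpha\times\beta$ is a degree-$d$ covering, and $\text{area}_h(\Pi)=d\,l_\sigma(\alpha)l_\rho(\beta)$. The Lagrangian condition imposes nothing here, since a product of curves is automatically Lagrangian. For uniqueness, $\alpha$ and $\beta$ are determined by $\Pi$ up to conjugacy and inversion. They are the primitive roots of the cyclic groups $(\iota_j)_*(\pi_1 T^2)$, whose conjugacy classes depend only on $\Pi$. The degree $d$ is the index of $\iota_*\pi_1(T^2)$ in $\langle a\rangle\times\langle b\rangle$; this subgroup is the full centralizer-type maximal abelian subgroup containing it, so $d$ depends only on $\Pi$. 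The choice of minimizer does not matter. Finally, for fixed $(\alpha,\beta,d)$, every such $\Pi$ is the conjugacy class of an index-$d$ subgroup of $\langle a\rangle\times\langle b\rangle\cong\Z^2$. The number of these is at most $d^2$, as in Lemma \ref{torus.cover}.

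I expect the main obstacle to be the nonpositively curved (not strictly negative) case. There one must check that harmonic components land on a single geodesic and that the area still equals $d\,l_\sigma(\alpha)l_\rho(\beta)$ even when flat cylinders exist. I would first try the totally geodesic property, which makes each $\iota_j$ affine. Failing that, I would use a direct lower bound: area $\ge$ the integral over one factor direction of lengths of homotopically nontrivial loops, giving area $\ge d\,l_\sigma(\alpha)l_\rho(\beta)$. The covering of $\alpha\times\beta$ attains this bound.
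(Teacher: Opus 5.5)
Your proposal is correct and follows essentially the same route as the paper. Both arguments take a Schoen--Yau minimizer, use Eells--Sampson to make each component totally geodesic onto a closed geodesic, use $\pi_1$-injectivity to get a degree-$d$ cover of the product torus, and count index-$d$ subgroups of $\Z^2$ as in Lemma \ref{torus.cover}. Your additions make explicit what the paper asserts without comment: you characterize $(\alpha,\beta)$ and $d$ through the maximal abelian subgroup $\langle a\rangle\times\langle b\rangle$, and you note that parallel geodesics in flat cylinders change neither the classes nor the lengths.
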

	\begin{proof}
	{Schoen--Yau \cite{schoen-yau} gives $(T^2,\iota=(\iota_1,\iota_2))\in \Pi$, an area-minimizing branched immersion achieving  $\area_h(\Pi)$. The maps $\iota_1$ and $\iota_2$ are harmonic and so we obtain from \cite{eells-sampson} that each map is totally geodesic with image a point or a closed geodesic. The map $\iota$ is $\pi_1$-injective and so the latter holds for $\iota_1$ and $\iota_2$.  Thus we find  a unique pair $(\alpha(\Pi),\beta(\Pi))\in  \mathcal C^*\times  \mathcal C^*$ so that $\iota_1(T^2)$ and $\iota_2(T^2)$ represent $\alpha(\Pi)$ and $\beta(\Pi)$, respectively, and 
$$\iota(T^2)=\iota_1(T^2)\times\iota_2(T^2).$$ 
The group $\iota_*(\pi_1(S^1\times S^1))$ has finite index in $\pi_1(\iota_1(T^2)\times\iota_2(T^2))$, and so $\iota$ is a degree $d=d(\Pi)$ covering map. Thus $\text{area}_{\sigma\oplus\rho}(\Pi)=dl_\sigma(\alpha)l_\rho(\beta)$.
		The rest of the proof follows as in  Lemma \ref{torus.cover}.}
	\end{proof}

	Consider a surface $S$ of genus $k>1$. Fix a finite cover {$p_i$} from $S$ to $\Sigma$ in each equivalence class of $C_g(k)${, so that $C_g(k)=\{[p_i]\}_{i=1}^n$.  Recall that two covers are equivalent if, up to precomposing with an element of the mapping class group of $S$, they are homotopic.} Given $p_1,p_2\in  C_g(k)$ and $\phi\in \text{Map}(S)$, after choosing a representative of $\phi$ we denote by $\Pi(p_1,p_2,\phi)\in \mathcal L^*(k)$ the conjugacy class induced by the immersion
	$$\iota:S\rightarrow \Sigma\times\Sigma,\quad\iota(x)=(p_1(\phi(x)),p_2(x)).$$
	If $S=\Sigma$, we simplify the notation to $\Pi(\phi)\in \mathcal L^*(g)$, $\phi\in \text{Map}(\Sigma)$.
	
	\begin{lemm}\label{top.argument2} Consider $k>1$. If $k-1$ is not a positive multiple of $g-1$ then $\mathcal L^*(k)=\emptyset$. If $k-1$ is a positive multiple of $g-1$, the assignment 
		$$(p_1,p_2,\phi)\in C_g(k)\times C_g(k)\times  \text{Map}(S)\mapsto  \Pi(p_1,p_2,\phi)\in \mathcal L^*(k)$$
		is surjective. Moreover, $\Pi(p_1,p_2,\phi)=\Pi(q_1,q_2,\psi)$ if and only if $p_1=q_1$, $p_2=q_2$, and
		$\psi=T_1\circ \phi\circ T_2$
		in $\text{Map}(S)$ for some $T_1\in D(p_1)$ and $T_2\in D(p_2)$.
	\end{lemm}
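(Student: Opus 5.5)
We follow the proof of Lemma \ref{conjugacy.one}, with the circle factor replaced by a second copy of $\Sigma$.

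\textbf{Step 1: each factor is a covering.} Let $\iota=(\iota_1,\iota_2):S\to\Sigma\times\Sigma$ be $\pi_1$-injective with $\int_S\iota^*\bar\omega=0$, and set $G=\ker(\iota_1)_*$. Since $\iota$ is $\pi_1$-injective, $(\iota_2)_*$ restricted to $G$ is injective. Hence $G$ is a normal subgroup of $\pi_1(S)$ isomorphic to a subgroup of $\pi_1(\Sigma)$.

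\begin{itemize}
\item If $G$ is trivial, then $(\iota_1)_*$ is injective. By Scott's theorem \cite{scott}, its image is free or of finite index; it is not free, so it has finite index. Thus $\iota_1$ is homotopic to a covering of degree $d_1>0$.
\item If $G$ is nontrivial, then $G$ has infinite index in $\pi_1(S)$; otherwise $\pi_1(S)$ would embed into $\pi_1(\Sigma)$ with $(\iota_1)_*$ having finite image, which is impossible for a torsion-free group unless $G=\pi_1(S)$. A nontrivial normal subgroup of infinite index in a closed surface group is infinitely generated and free. In that case I would argue that $\deg\iota_1=0$.
\end{itemize}

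I expect the cleaner route is to use the Lagrangian condition. We have $0=\int_S\iota^*\bar\omega=\deg(\iota_1)\,4\pi(g-1)-\deg(\iota_2)\,4\pi(g-1)$, so $\deg\iota_1=\deg\iota_2$. Then I will show that both degrees are nonzero. If $\deg\iota_1=0$, then $\deg\iota_2=0$. Each $(\iota_j)_*$ then has image of infinite index, which is therefore free by Scott. So $\pi_1(S)$ embeds in $F_1\times F_2$, a product of free groups.

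A closed hyperbolic surface group cannot embed in a product of two free groups. The standard argument is this. Projecting to $F_1$ gives a kernel that is normal and embeds into $F_2$, hence is free. The image in $F_1$ is free and finitely generated. If the kernel is nontrivial and the image nontrivial, then $\pi_1(S)$ is an extension of a nontrivial free group by a nontrivial free normal subgroup. This is impossible, because a finitely generated normal subgroup of infinite index in a surface group is trivial; alternatively, the extension would give $\chi=0$ by the Euler characteristic of fibrations. Otherwise one projection is injective, which contradicts freeness.

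This is the step I expect to be the main obstacle, and I would cite Baumslag–Roseblade if needed. We conclude that $\iota_1$ and $\iota_2$ are both homotopic to coverings of the same degree $d$. Hence $k-1=d(g-1)$, which gives emptiness otherwise.

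\textbf{Step 2: surjectivity.} After precomposing with a homeomorphism, $\iota_2\simeq p_2$ for some representative $p_2\in C_g(k)$. Since $\iota_1$ is equivalent to some $p_1$, we get $\iota_1\simeq p_1\circ\phi$ for some $\phi\in\text{Map}(S)$. Thus $\iota\simeq(p_1\circ\phi,p_2)$, which represents $\Pi(p_1,p_2,\phi)$. Orientation is handled as follows: the degrees are equal and positive, so both $p_j$ preserve orientation after composing with a fixed orientation-reversing map if necessary, and $\phi$ is orientation-preserving.

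\textbf{Step 3: injectivity modulo decks.} Suppose $\Pi(p_1,p_2,\phi)=\Pi(q_1,q_2,\psi)$. As in Lemma \ref{conjugacy.one}, Dehn–Nielsen–Baer gives a homeomorphism $T$ of $S$ with $(q_1\circ\psi\circ T,q_2\circ T)\simeq(p_1\circ\phi,p_2)$.

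\begin{itemize}
\item From the second coordinate, $q_2=p_2$ and $T\simeq T_2\in D(p_2)$; in particular $T$ is orientation-preserving.
\item From the first coordinate, $q_1\circ(\psi T\phi^{-1})\simeq p_1$. Hence $q_1=p_1$ and $\psi T\phi^{-1}=T_1\in D(p_1)$.
\end{itemize}

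This gives $\psi=T_1\circ\phi\circ T_2^{-1}$, which has the stated form. The converse holds because deck transformations satisfy $p_j\circ T_j=p_j$.
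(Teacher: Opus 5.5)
Your Steps 2 and 3 are fine and agree with the paper: Dehn--Nielsen--Baer, then reading off $q_2=p_2$ and $T\in D(p_2)$ from the second factor, then $p_1$ from the first. The gap is in Step 1, which is the heart of the lemma, and the degree-based route you take does not work as written.

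\textbf{What fails in Step 1.}

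First, ``$\deg\iota_1=0$ implies $(\iota_1)_*$ has infinite-index image'' is false. The fold map from the double of $\Sigma\setminus D$ to $\Sigma$, equal to the inclusion on both halves, has degree $0$ yet is surjective on $\pi_1$. So you cannot reduce to an embedding $\pi_1(S)\hookrightarrow F_1\times F_2$ this way.

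Second, your ``standard argument'' that a surface group does not embed in $F_1\times F_2$ is wrong. Surface groups \emph{are} extensions of a free group by a free normal subgroup: the kernel of any epimorphism $\pi_1(S)\to\Z$ is a nontrivial, infinitely generated free normal subgroup. Both facts you invoke (finitely generated normal subgroups are trivial; $\chi=0$ for fibrations) require the kernel to be finitely generated, and it is not. Citing Baumslag--Roseblade would rescue this sub-claim, but not the other two problems.

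Third, even granting that both degrees equal some $d\neq 0$, nonzero degree does not make a map homotopic to a covering. A pinch map from a genus $k>g$ surface onto $\Sigma$ has degree $1$. You never show that either factor is $\pi_1$-injective, so ``both homotopic to coverings'' does not follow.

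\textbf{The missing idea.} The paper avoids degrees at this stage and works with both kernels at once. Set $K_j=\ker(\iota_j)_*$. Injectivity of $\iota_*$ gives $K_1\cap K_2=\{1\}$. Suppose both $K_j$ were nontrivial, and pick $\alpha_j\in K_j\setminus\{1\}$.
\begin{itemize}
\item If $\alpha_1,\alpha_2$ commute, then since centralizers in $\pi_1(S)$ are cyclic, they have a common nontrivial power $\alpha_1^{m_1}=\alpha_2^{m_2}$, which lies in $K_1\cap K_2$.
\item If they do not commute, then $[\alpha_1,\alpha_2]\neq 1$ and it lies in $K_1\cap K_2$.
\end{itemize}
Either case is a contradiction, so after relabeling $(\iota_1)_*$ is injective. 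Your first bullet (Scott) then makes $\iota_1$ homotopic to a covering of some degree $d$, whence $k-1=d(g-1)$.

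Only now does the paper use the Lagrangian condition: it gives $\deg\iota_2=d$, so $|\chi(S)|=|\deg\iota_2|\,|\chi(\Sigma)|$. The equality case of Kneser's inequality then shows $\iota_2$ is also homotopic to a covering. With Step 1 replaced by this argument, the rest of your proposal goes through.
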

	\begin{proof}
		Consider $\Pi\in \mathcal L^*(k)$. It is induced  by a  $\pi_1$-injective immersion 
		$$
		\iota=(\iota_1,\iota_2):S\rightarrow \Sigma\times \Sigma\quad\text{with $S$ a surface of genus $k$ and }\int_S\iota^*\bar \omega=0.
		$$
 Suppose that neither $(\iota_i)_*$, $i=1,2$, is injective. In this case, there exist elements $\alpha_i\in \pi_1(S)$ such that $(\iota_i)_{*}(\alpha_i)=0$, $i=1,2$. If $\alpha_1$ and $\alpha_2$ commute, then $\alpha_1^{m_1}=\alpha_2^{m_2}$ for some $m_1,m_2\in\Z\setminus\{0\}$. Thus $\iota_*(\alpha_1^{m_1})$ is trivial in $\pi_1(\Sigma\times\Sigma)$. If $\alpha_1$ and $\alpha_2$ do not commute, then $\gamma=\alpha_1\alpha_2\alpha_1^{-1}\alpha_2^{-1}$ is such that $\iota_*(\gamma)$ is trivial in $\pi_1(\Sigma\times\Sigma)$. Both cases contradict $\iota$ being $\pi_1$-injective.

Thus, without loss of generality, $(\iota_1)_*$ is injective. In this case we know from \cite{scott} that $(\iota_1)_*(\pi_1(S))$ has finite index in $\pi_1(\Sigma)$. Therefore $\iota_1$ is homotopic to a covering map of degree $d>0$. As a result, $\chi(S)=d\chi(\Sigma)$. In particular, $k-1 =d(g-1).$ The condition $\int_S\iota^*\bar\omega=0$ implies that $\iota_2$ also has degree $d$. {From the equality case in Kneser’s inequality \cite{kneser}, $\iota_2$ is also  homotopic to a covering map of degree $d>0$.

Thus $\iota$ is homotopic to an immersion  $\iota'=(\iota'_1,\iota'_2):S\rightarrow \Sigma\times\Sigma$, where $\iota'_1$ and $\iota'_2$ are both covering maps of degree $d$.}  There is a unique $p_2\in C_g(k)$ such that, after precomposing $\iota'$ with some $T\in  \text{Diff}_+(S)$ (and still denoting the new map by $\iota'$), we have that $\iota'_2$ is homotopic to $p_2$. There is a unique $p_1\in C_g(k)$ and some $\phi\in  \text{Diff}_+(S)$ such that $\iota'_1$ is homotopic to $p_1\circ\phi$. Hence $\Pi=\Pi(p_1,p_2,\phi)$. This shows that the assignment is surjective.
		
		Suppose now that $\Pi(p_1,p_2,\phi)=\Pi(q_1,q_2,\psi)$, and choose representatives $\phi,\psi\in \text{Diff}_+(S)$. Arguing as in Lemma \ref{conjugacy.one}, there exists $T\in \text{Diff}_+(S)$ such that $(q_1\circ\psi\circ T,q_2\circ T)$ is homotopic to $(p_1\circ\phi,p_2)$. Hence $q_2\circ T$ is homotopic to $p_2$, so by the choice of the representatives of $C_g(k)$ we have $q_2=p_2$ and $T\in D(p_2)$. Also, $q_1\circ\psi\circ T$ is homotopic to $p_1\circ\phi$, and therefore $q_1\circ\psi\circ T\circ\phi^{-1}$ is homotopic to $p_1$. Again by the choice of the representatives of $C_g(k)$, we have $q_1=p_1$ and $\psi\circ T\circ\phi^{-1}\in D(p_1)$. Thus
		$$\psi=T_1\circ \phi\circ T_2$$
		in $\text{Map}(S)$ for some $T_1\in D(p_1)$ and $T_2\in D(p_2)$.
		
		Conversely, if $\psi=T_1\circ \phi\circ T_2$ in $\text{Map}(S)$ with $T_1\in D(p_1)$ and $T_2\in D(p_2)$, choose representatives so that this equality holds in $\text{Diff}_+(S)$. Then
		$$(p_1\circ\psi,p_2)=(p_1\circ\phi\circ T_2,p_2)=(p_1\circ\phi,p_2)\circ T_2,$$
		and therefore $\Pi(p_1,p_2,\psi)=\Pi(p_1,p_2,\phi)$.
	\end{proof}	
	
	{The previous lemma has the following consequence. Consider  $$\Pi(p_1,p_2,\phi)\in \mathcal L^*(k)\quad\text{with $k=d(g-1)+1$ and }d\in \N.$$  We have from \cite{schoen-yau} the existence of $(S,\iota)\in \Pi$  so that
$$\iota=(\iota_1,\iota_2):S\rightarrow \Sigma\times\Sigma $$		
is an area-minimizing branched immersion  achieving $\area_h(\Pi)$. The maps   $\iota_1, \iota_2$ are harmonic, homotopic to  covering maps of degree $d$ if $d\geq 1$ and thus covering maps of degree $d$ (see \cite[Section 3]{schoen-yau-univalent}). Thus we find $T\in \text{Diff}_0(S)$ and $\psi \in  \text{Diff}_+(S)$ homotopic to $\phi$ so that $\iota\circ T=(p_1\circ \psi,p_2)$.  As a result, the minimal immersion is unique (see \cite{markovic}). 
When $h$ is K\"{a}hler--Einstein, Y. I. Lee showed in \cite[Theorem 1 and Theorem 2]{lee} that the minimal immersion $\iota$ is Lagrangian and thus the  assignment $L\in \mathcal L(k)\mapsto [\pi_1(L)]\in \mathcal L^*(k)$ is a bijection.}

\section{Proof of Theorems in $\Sigma\times \Sigma$}	
\subsection{Proof of Theorem \ref{lagrangian.tori2}} We start by recalling the statement.
	\begin{thm}
				Consider a nonpositively curved metric $h=\sigma\oplus\rho$ on $\Sigma\times \Sigma$. We have
{$$\lim_{A\to\infty}\frac{\ln(\#\{\Pi\in \mathcal L^*(1):\text{area}_h(\Pi)\leq A\})}{A}=\max\left\{\frac{\delta(\rho)}{\text{sys}(\sigma)},\frac{\delta(\sigma)}{\text{sys}(\rho)}\right\}.$$}
	\end{thm}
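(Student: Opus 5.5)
Our plan is to reduce the count to closed geodesics through Lemma \ref{top.argument.genus1}, following the proof of Theorem \ref{codimension.one.torus.thm}. We count pairs $(\alpha,\beta)\in\mathcal C^*\times\mathcal C^*$ weighted by degree, and then use the exponential growth of closed geodesics for each factor. Margulis gives this growth for negatively curved metrics; for nonpositively curved surfaces of genus $\geq 2$ we use the analogue due to Knieper, that $\lim_{L\to\infty}\frac1L\ln\#\{\alpha:l_\rho(\alpha)\leq L\}=\delta(\rho)$.

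\emph{Upper bound.} Write $N_\sigma(L)=\#\{\alpha\in\mathcal C^*:l_\sigma(\alpha)\leq L\}$. Every closed geodesic has length at least the systole, so $l_\sigma(\alpha)l_\rho(\beta)\leq A$ forces $d\leq A/(\text{sys}(\sigma)\text{sys}(\rho))$. Lemma \ref{top.argument.genus1} then bounds the count by $C A^3\#\{(\alpha,\beta):l_\sigma(\alpha)l_\rho(\beta)\leq A\}$. To count the pairs, we split the range of $l_\sigma(\alpha)$ into unit intervals $[t,t+1]$ with $\text{sys}(\sigma)\leq t\leq A/\text{sys}(\rho)$. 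The number of pairs with $l_\sigma(\alpha)\in[t,t+1]$ is at most $N_\sigma(t+1)N_\rho(A/t)$, which is
$$\leq e^{(\delta(\sigma)+\epsilon)t+(\delta(\rho)+\epsilon)A/t+C_\epsilon}.$$
The exponent $f(t)=\delta(\sigma)t+\delta(\rho)A/t$ is convex in $t$. On the interval $t\in[\text{sys}(\sigma),A/\text{sys}(\rho)]$ its maximum is therefore attained at an endpoint. At the endpoints we get
$$f\bigl(\text{sys}(\sigma)\bigr)=\frac{\delta(\rho)A}{\text{sys}(\sigma)}+O(1),\qquad f\Bigl(\frac{A}{\text{sys}(\rho)}\Bigr)=\frac{\delta(\sigma)A}{\text{sys}(\rho)}+O(1).$$
The $\epsilon$-terms contribute at most $\epsilon(t+A/t)\leq \epsilon C A$. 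Summing over $O(A)$ intervals and taking $\ln(\cdot)/A$ gives the $\limsup$ bound by the maximum, after letting $\epsilon\to0$.

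\emph{Lower bound.} Suppose $\delta(\rho)/\text{sys}(\sigma)$ is the larger quantity. Fix $\alpha_0$ realizing $\text{sys}(\sigma)$ and use the tori with $d=1$ of the form $[\alpha_0\times\beta]$. For this we need $\alpha_0\times\beta$ to define a Lagrangian genus-one class, i.e. $\int\iota^*\bar\omega=0$. This holds because the product of curves in distinct factors has each $\iota_i$ factoring through a circle, so each pulled-back $\bar\omega_i$ integrates to zero. The classes are distinct for distinct $\beta\in\mathcal C^*$ by the uniqueness in Lemma \ref{top.argument.genus1}, and their areas are $\text{sys}(\sigma)l_\rho(\beta)$. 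Hence the count is at least $N_\rho(A/\text{sys}(\sigma))$, which gives the lower bound by the growth asymptotic. The symmetric case is identical.

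\emph{Main obstacle.} The main obstacle is the geodesic-growth input in nonpositive curvature: the exponential growth rate of primitive closed geodesics must equal the topological entropy, and we must count free homotopy classes rather than geodesics, since flat strips may occur. We invoke Knieper's result here and note that each class contributes once. The convexity endpoint argument is routine.
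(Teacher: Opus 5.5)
Your proposal is correct and follows essentially the paper's route. Both reduce to pairs $(\alpha,\beta)$ via Lemma \ref{top.argument.genus1}, with a polynomial factor coming from $d\le c_0A$, then use Knieper's growth asymptotic, and get the lower bound from the classes $[\alpha_0\times\beta]$ with $l_\sigma(\alpha_0)=\text{sys}(\sigma)$. The only difference is how you bound the number of pairs with $l_\sigma(\alpha)l_\rho(\beta)\le A$: you slice $l_\sigma(\alpha)$ into unit intervals and use convexity of $\delta(\sigma)t+\delta(\rho)A/t$, whereas the paper splits into the two regions $l_\sigma(\alpha)\le\sqrt A$ or $l_\rho(\beta)\le\sqrt A$, which yields the same two endpoint exponents with only two terms.
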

	\begin{proof}    
		Assume without loss of generality {that $\frac{\delta(\rho)}{\text{sys}(\sigma)}\geq \frac{\delta(\sigma)}{\text{sys}(\rho)}$.
		
		Let $\alpha(\Pi),\beta(\Pi)$ and $d(\Pi)$ be as in Lemma \ref{top.argument.genus1}. There is $c_0=c_0(\sigma,\rho)$ so that if $d\,l_\sigma(\alpha)l_\rho(\beta)\leq A$ then $d<c_0A$. If we set $N=\lfloor c_0A \rfloor$, we have from Lemma \ref{top.argument.genus1} that
		\begin{multline*}
		\{\Pi\in \mathcal L^*(1):\text{area}_{\sigma\oplus\rho}(\Pi)\leq A\}\\
		\subset \bigcup_{d=1}^N\{\Pi\in \mathcal L^*(1):l_\sigma(\alpha(\Pi))l_\rho(\beta(\Pi))\leq A, d(\Pi)=d\}.
		\end{multline*}
		Using Lemma \ref{top.argument.genus1} again we see that
		\begin{multline*}
		\#\{\Pi\in \mathcal L^*(1):\text{area}_{\sigma\oplus\rho}(\Pi)\leq A\}\\
		\leq c_0^3A^3\#\{(\alpha,\beta)\in \mathcal C^*\times \mathcal C^*:l_\sigma(\alpha)l_\rho(\beta)\leq A\}.
		\end{multline*}
		Using the fact that $(\Sigma,\sigma)$ and $(\Sigma,\rho)$ have, respectively, $e^{\delta(\sigma)L+o(L)}$ and $e^{\delta(\rho)L+o(L)}$ elements of $\mathcal C^*$ of length less than or equal to $L$, we have
		\begin{multline*}
			\#\{(\alpha,\beta)\in \mathcal C^*\times \mathcal C^*:l_\sigma(\alpha)l_\rho(\beta)\leq A\}
			\\
			\leq \#\{(\alpha,\beta)\in \mathcal C^*\times \mathcal C^*: l_\sigma(\alpha)\leq \sqrt A, l_\rho(\beta)\leq A/\text{sys}(\sigma)\}\\
			\quad+\#\{(\alpha,\beta)\in \mathcal C^*\times \mathcal C^*: l_\sigma(\alpha)\leq A/\text{sys}(\rho), l_\rho(\beta)\leq \sqrt A\}\\
			\leq 2e^{o(A)}e^{\delta(\rho)A/\text{sys}(\sigma)}=e^{\delta(\rho)A/\text{sys}(\sigma)+o(A)}.
		\end{multline*}
		Hence we obtain that
		\begin{multline*}
			\limsup_{A\to \infty}\frac{\ln \#\{\Pi\in \mathcal L^*(1):\text{area}_{\sigma\oplus\rho}(\Pi)\leq A\}}{A}\\
			\leq \limsup_{A\to \infty}\frac{\ln \#\{(\alpha,\beta)\in \mathcal C^*\times \mathcal C^*:l_\sigma(\alpha)l_\rho(\beta)\leq A\}}{A}\leq \frac{\delta(\rho)}{\text{sys}(\sigma)}.
		\end{multline*}
		The lower bound follows in the same way because if $\alpha_0\in \mathcal C^*$ satisfies $l_\sigma(\alpha_0)=\text{sys}(\sigma)$, then for every $\beta\in \mathcal C^*$ the product of geodesics representing $\alpha_0$ and $\beta$ produces an element  $\Pi\in \mathcal L^*(1)$ and
		$$\text{area}_{\sigma\oplus\rho}(\Pi)=\text{sys}(\sigma)l_\rho(\beta).$$
		Therefore
		$$\#\{\beta\in \mathcal C^*:\text{sys}(\sigma)l_\rho(\beta)\leq A\}\leq
		 \#\{\Pi\in \mathcal L^*(1):\text{area}_{\sigma\oplus\rho}(\Pi)\leq A\}.$$
		Using 
		 \cite{margulis}\footnote{More precisely, the extension to nonpositive curvature proven in \cite{knieper}.} in the last inequality we obtain
		\begin{multline*}
			\liminf_{A\to \infty}\frac{\ln \#\{\Pi\in \mathcal L^*(1):\text{area}_{\sigma\oplus\rho}(\Pi)\leq A\}}{A}\\
			\geq \lim_{A\to \infty}\frac{\ln \#\{\beta\in \mathcal C^*:\text{sys}(\sigma)l_\rho(\beta)\leq A\}}{A}=\frac{\delta(\rho)}{\text{sys}(\sigma)}.
		\end{multline*}
		}

	\end{proof}
	
\subsection{Proof of Theorem \ref{main.thm.variable}}

	We denote $\Pi(Id)\in \mathcal L^*(g)$ by $\Pi_0$. Given $\sigma,\rho$  metrics  on $\Sigma$  set
	$$A(\sigma,\rho)=\text{area}_{\sigma\oplus\rho}(\Pi_0).$$
	This induces a function on $\mathcal T(\Sigma)\times\mathcal T(\Sigma)$ given by 
	$$A: \mathcal T(\Sigma)\times\mathcal T(\Sigma)\rightarrow (0,+\infty),\quad (\sigma,\rho)\mapsto A(\sigma,\rho).$$
	
The next proposition is the analogue of Proposition \ref{area.estimate} but for $\Sigma\times \Sigma$.
		\begin{prop}\label{prop2}
			{We have for all $\sigma,\rho\in \mathcal{M}_{\leq 0}(\Sigma)$ that
			$$i(\lambda_\sigma,\lambda_\rho)\leq A(\sigma,\rho)\leq i(\lambda_\sigma,\lambda_\rho)+
			\area_\rho(\Sigma)+\area_\sigma(\Sigma).$$}
		\end{prop}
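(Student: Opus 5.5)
The plan is to get the upper bound from a single area-minimizing representative of $\Pi_0$, using the harmonic-map estimates of the previous subsection, and the lower bound from a Crofton-type argument with Liouville currents. The two bounds are independent.

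\emph{Upper bound.} By Schoen--Yau \cite{schoen-yau} (as used in Section~\ref{top.argument}) pick a conformal area-minimizing branched immersion $\iota=(u_1,u_2):(\Sigma,\tau)\to(\Sigma\times\Sigma,\sigma\oplus\rho)$ in $\Pi_0$ realizing $A(\sigma,\rho)$, where $\tau$ is a metric in the induced conformal class.
\begin{enumerate}
\item Each $u_i$ is harmonic and homotopic to the identity, hence is the harmonic diffeomorphism of Section~3.3. Let $\Phi_1=\Phi(\sigma)$ and $\Phi_2=\Phi(\rho)$ be the Hopf differentials computed with respect to $\tau$.
\item Conformality of $\iota$ says that the $(2,0)$-part of $\iota^*(\sigma\oplus\rho)$ vanishes, i.e.\ $\Phi_1+\Phi_2=0$. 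Since $\Phi_2=-\Phi_1$, we get $\mu_V(\Phi_2)=\mu_H(\Phi_1)$ and $\|\Phi_2\|=\|\Phi_1\|=:\|\Phi\|$.
\item Conformality also gives $\mathrm{area}(\iota)=E_{\tau,\sigma}(u_1)+E_{\tau,\rho}(u_2)$. Applying \eqref{ejphi} to each term yields
$$A(\sigma,\rho)\leq 4\|\Phi\|+\area_\sigma(\Sigma)+\area_\rho(\Sigma).$$
\item It remains to show $4\|\Phi\|\leq i(\lambda_\sigma,\lambda_\rho)$. By \eqref{vert.hor.inter}, $\|\Phi\|=i(\mu_V(\Phi_1),\mu_H(\Phi_1))$.
\item Apply Lemma~\ref{wolf.lemma4.6} to $u_1$ with $\nu=\mu_H(\Phi_1)$: this gives $2\|\Phi\|\leq i(\lambda_\sigma,\mu_H(\Phi_1))=i(\lambda_\sigma,\mu_V(\Phi_2))$.
\item Apply Lemma~\ref{wolf.lemma4.6} to $u_2$ with $\nu=\lambda_\sigma$: this gives $2i(\mu_V(\Phi_2),\lambda_\sigma)\leq i(\lambda_\rho,\lambda_\sigma)$.
\end{enumerate}
Chaining the last two steps gives $4\|\Phi\|\leq i(\lambda_\sigma,\lambda_\rho)$, which is the upper bound. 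One point to check is that Lemma~\ref{wolf.lemma4.6} allows $\nu=\lambda_\sigma$ when $\sigma$ is only nonpositively curved. It does, since it is stated for every $\nu\in\mathcal C(\Sigma)$, and Croke's extension places $\lambda_\sigma$ in $\mathcal C(\Sigma)$.

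\emph{Lower bound.} Take an arbitrary $(\Sigma,\iota=(\iota_1,\iota_2))\in\Pi_0$; by Lemma~\ref{top.argument2} we may assume both $\iota_i$ are homotopic to the identity. The idea is a Crofton argument mirroring the coarea argument of Proposition~\ref{area.estimate}, with $\sigma$-geodesics playing the role of the level sets $\iota_2^{-1}(\theta)$.
\begin{enumerate}
\item Lift everything to the universal cover. For a complete $\sigma$-geodesic $g\in G(\hat\Sigma)$, consider $c_g=\hat\iota_2(\hat\iota_1^{-1}(g))$.
\item Because $\hat\iota_1$ and $\hat\iota_2$ are equivariantly homotopic to the identity, $c_g$ contains a path with the same endpoints at infinity as $g$.
\item Hence, integrating over a fundamental domain for the $\pi_1$-action,
$$\int \mathrm{length}_\rho(c_g)\,d\lambda_\sigma(g)\geq i(\lambda_\rho,\lambda_\sigma).$$
This follows from the defining property $i(\lambda_\rho,\delta_\gamma)=l_\rho([\gamma])$ together with \eqref{prop2.bonahon} and continuity of $i$, via an approximation of $\lambda_\sigma$ by weighted closed geodesics.
\item Finally we need the pointwise Crofton-type inequality
$$\mathrm{area}_{\iota^*(\sigma\oplus\rho)}(\Sigma)\geq\int\mathrm{length}_{\rho}(c_g)\,d\lambda_\sigma(g).$$
The plan is to write $d\lambda_\sigma$ locally in Santaló form, as $\tfrac12\,|\sin\theta|\,d\theta\,dA_\sigma$-type coordinates on the space of geodesics. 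One then checks at each point of $\Sigma$ that averaging over directions $|d\iota_1|\cdot|d\iota_2|$ restricted to $\iota_1^{-1}$ of a line is bounded by the Jacobian of $\iota$. This is the $2\times2$ linear-algebra inequality $\sqrt{\det(\iota_1^*\sigma+\iota_2^*\rho)}\geq$ the Crofton average. Taking the infimum over $\iota\in\Pi_0$ then gives the lower bound.
\end{enumerate}

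I expect the main obstacle to be step~4 of the lower bound: getting the normalizations exactly right, so that the constant is $1$ and matches $i(\lambda_\sigma,\delta_\gamma)=l_\sigma(\gamma)$, and handling non-transversality of $g$ and $\iota_1$. If this is messy, I would instead prove the lower bound first for smooth $\iota$ with $\iota_1$ a diffeomorphism, where $c_g$ is simply a curve. I would also first treat the case where $\lambda_\sigma$ is replaced by currents $t\delta_\gamma$, so that the inequality becomes a coarea estimate along tubular neighbourhoods of $\gamma$, and pass to the limit using \eqref{prop2.bonahon}.
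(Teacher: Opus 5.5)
Your proposal is correct and follows the paper's proof. The upper bound is the paper's argument, up to the order in which Lemma~\ref{wolf.lemma4.6} is applied to the two factors. The lower bound uses the same Liouville/Crofton mechanism as Lemma~\ref{left.hand.side}: the paper works with the minimizer (whose components are diffeomorphisms, which is what justifies your fallback), passes to the graph of $\phi=\iota_2\circ\iota_1^{-1}$, and cites Croke--Fathi for your step~3.

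The step you flag as the main obstacle is the easy one. With $\lambda_1,\lambda_2$ the singular values of $d\phi$, one has $\frac14\int_0^{2\pi}|d\phi(e_\theta)|_\rho\,d\theta\le|\lambda_1|+|\lambda_2|\le\sqrt{(1+\lambda_1^2)(1+\lambda_2^2)}$. The left side is your Crofton average when $\lambda_\sigma=\frac12\sin\theta\,d\theta\,ds$ in Santal\'o coordinates. Your density argument for step~3 is a valid substitute for the Croke--Fathi citation, at least for negatively curved $\sigma$. To make it work, write the left side of step~3 as $\int_{T^1\Sigma}|d\phi(v)|_\rho\,d\bar\nu$, where $\bar\nu$ is the flow-invariant measure of the current, and use its weak$^*$ continuity in $\nu$.
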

		\begin{proof}
{Consider the minimal immersion $(\Sigma,\iota)\in \Pi_0$
$$\iota=(\iota_1,\iota_2):\Sigma\rightarrow \Sigma\times\Sigma $$		
 with area $A(\sigma,\rho)$. Set  $\tau=\iota^*(\sigma\oplus\rho)$ a smooth metric on $\Sigma$. Then $\iota$ is a conformal harmonic map and we have
$$A(\sigma,\rho)=\frac{1}{2}\int_\Sigma|d\iota|_{\sigma\oplus\rho}^2dA_\tau=E_{\tau,\sigma}(\iota_1)+E_{\tau,\rho}(\iota_2).$$
Both $\iota_1$ and $\iota_2$ are harmonic maps and the fact that $\iota$ is conformal implies that $\Psi=\Phi(\rho)=-\Phi(\sigma)$. Using \eqref{ejphi} and then   \eqref{vert.hor.inter} we deduce
\begin{align*}
A(\sigma,\rho)&\leq 4||\Psi||+\area_\sigma(\Sigma)+\area_\rho(\Sigma)\\
&=4i(\mu_V(-\Psi),\mu_H(-\Psi))+\area_\sigma(\Sigma)+\area_\rho(\Sigma).
\end{align*}
We have $\mu_H(-\Psi)=\mu_V(\Phi(\rho)).$ Using	 Lemma \ref{wolf.lemma4.6} with $\nu=\mu_V(-\Psi)$ we obtain
	$$A(\sigma,\rho)\leq 2i(\mu_V(-\Psi),\lambda_\rho)+\area_\sigma(\Sigma)+\area_\rho(\Sigma).$$
	Using  Lemma \ref{wolf.lemma4.6} again with $\nu=\lambda_\rho$ and $\Phi=\Phi(\sigma)=-\Psi$ we obtain
	$$A(\sigma,\rho)\leq i(\lambda_\sigma,\lambda_\rho)+\area_\sigma(\Sigma)+\area_\rho(\Sigma).$$	
	This proves the upper bound.} Next we prove the lower bound and this concludes the proof.
\begin{lemm}\label{left.hand.side}
$A(\sigma,\rho)\geq i(\lambda_\sigma,\lambda_\rho).$
			\end{lemm}
			\begin{proof}
				{The maps $\iota_1,\iota_2$ are in $\text{Diff}_+(\Sigma)_0 $ and so $\phi=\iota_2\circ\iota_1^{-1}$ is such that}  
				$$\iota(\phi):\Sigma\rightarrow \Sigma\times\Sigma,\quad x\mapsto (x,\phi(x))$$
				is also a minimal immersion with area  $A(\sigma,\rho)$.
				
Use $d\phi_p^{\top}$ to denote the adjoint of the map $d\phi_p:(T_p\Sigma,\sigma)\rightarrow (T_{\phi(p)}\Sigma,\rho).$ 
				Let $\lambda^2_1\geq \lambda_2^2> 0$ denote the eigenvalues of $d\phi^{\top}_p\circ d\phi_p$. We have $|d\phi|_\rho^2(p)= \lambda_1^2+ \lambda_2^2$ and $|\text{Jac}(\phi)|_\rho^2(p)=\lambda_1^2\lambda_2^2$.
				Thus
				\begin{align}\label{1.bound}
					A(\sigma,\rho)&=\int_{\Sigma}\sqrt{1+|d\phi|_\rho^2+|\text{Jac}(\phi)|_\rho^2}dA _\sigma
					\geq
					\int_{\Sigma}\sqrt{(|\lambda_1|+|\lambda_2|)^2}dA_\sigma \\ \notag
					&=   \int_{\Sigma}{|\lambda_1|+|\lambda_2|}dA_\sigma .
				\end{align}
				With $dV_\sigma$ being the Liouville measure on the unit tangent bundle $T^1\Sigma$ we have, using \eqref{1.bound} on the last inequality,
				\begin{align*}
					\int_{T^1\Sigma}|d\phi(v)|_\rho dV_{\sigma}&=\int_{\Sigma}\int_0^{2\pi}(\lambda_1^2\cos^2(\theta)+\lambda_2^2\sin^2(\theta))^{1/2}d\theta dA_\sigma\\ \notag
					&\leq \int_{\Sigma}\int_0^{2\pi}|\lambda_1||\cos(\theta)|+|\lambda_2||\sin(\theta)|d\theta dA_\sigma\\ \notag
					&\leq 4\int_{\Sigma}|\lambda_1|+|\lambda_2|dA_\sigma\leq 4A(\sigma,\rho).
				\end{align*}
				Hence we deduce
				\begin{equation}\label{area.bound.below}
					A(\sigma,\rho)\geq \frac{1}{4}\int_{T^1\Sigma}|d\phi(v)|_\rho dV_{\sigma}.
				\end{equation}
				Croke--Fathi's proof of  \cite[Theorem~2.1]{croke} shows\footnote{The factor of $4$ comes from a different normalization. We have $i(\lambda_\sigma,\lambda_\sigma)=\frac{\pi}{2}\area_\sigma(\Sigma).$}  that $$\int_{T^1\Sigma}|d\phi(v)|_\rho dV_{\sigma}\geq 4i(\lambda_\sigma,\lambda_\rho).$$
				
				Combining this inequality with \eqref{area.bound.below} we see that $A(\sigma,\rho)\geq i(\lambda_\sigma,\lambda_\rho)$.
			\end{proof}
		\end{proof}
\begin{thm}
		Consider a nonpositively curved metric $h=\sigma\oplus\rho$ on $\Sigma\times \Sigma$. We have
		\begin{equation}\label{genusg.couting}
		\lim_{A\to\infty}\frac{\#\{\Pi\in \mathcal L^*(g):\text{area}_h(\Pi)\leq A\}}{A^{6(g-1)}}=\frac{B(\sigma)B(\rho)}{b_g}.
		\end{equation}
		
		If $k-1\geq 1$ is not a positive integral multiple of $g-1$ then $\mathcal L^*(k)=\emptyset$.
		
		If $k-1$ is a positive multiple of $g-1$ then
		$$\lim_{A\to\infty}\frac{\#\{\Pi\in \mathcal L^*(k):\text{area}_h(\Pi)\leq A\}}{A^{6(k-1)}}=\sum_{p_1,p_2\in C_g(k)}\frac{B(p_1^*\sigma)B(p_2^*\rho)}{|D(p_1)||D(p_2)|b_k}.$$
	\end{thm}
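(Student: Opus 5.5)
We treat the genus $g$ case first and then deduce the general case by the same Burnside-type bookkeeping used in Lemma \ref{second.stable.inequality}.

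\emph{Genus $g$.} By Lemma \ref{top.argument2} with $S=\Sigma$ and $p_1=p_2=\mathrm{Id}$, whose deck groups are trivial, the assignment $\phi\in \text{Map}(\Sigma)\mapsto \Pi(\phi)\in\mathcal L^*(g)$ is a bijection. The representative of $\Pi(\phi)$ is $x\mapsto(\phi(x),x)$. Composing with the isometry $\phi\times \mathrm{Id}$ from $(\Sigma\times\Sigma,\phi^*\sigma\oplus\rho)$ onto $(\Sigma\times\Sigma,\sigma\oplus\rho)$ turns this map into the diagonal. Hence
$$\text{area}_h(\Pi(\phi))=A(\phi^*\sigma,\rho).$$
Proposition \ref{prop2} then gives
$$i(\phi^{-1}(\lambda_\sigma),\lambda_\rho)\leq \text{area}_h(\Pi(\phi))\leq i(\phi^{-1}(\lambda_\sigma),\lambda_\rho)+C,$$
where $C=\area_\sigma(\Sigma)+\area_\rho(\Sigma)$ and we used $\lambda_{\phi^*\sigma}=\phi^{-1}(\lambda_\sigma)$. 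The counting problem therefore reduces to a mapping class group orbit count. For this I would invoke the Rafi--Souto theorem on counting mapping class group orbits of filling currents, which extends Mirzakhani. It states that for filling currents $\mu,\nu$,
$$\#\{\phi\in\text{Map}(\Sigma): i(\phi(\mu),\nu)\leq L\}\sim \frac{m_{Th}(\{l_\mu\leq 1\})\,m_{Th}(\{l_\nu\leq1\})}{b_g}L^{6g-6}.$$
Liouville currents of nonpositively curved metrics are filling, since every geodesic has positive length. Taking $\mu=\lambda_\sigma$ and $\nu=\lambda_\rho$, and noting that replacing $\phi$ by $\phi^{-1}$ is a bijection of $\text{Map}(\Sigma)$, the constant is $B(\sigma)B(\rho)/b_g$. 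The additive error $C$ is harmless because $(A-C)^{6g-6}/A^{6g-6}\to1$. This gives \eqref{genusg.couting}. One point to check is that the Rafi--Souto constant is normalized by $b_g$ in exactly the form used in the definition of $B$ here.

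\emph{Emptiness and general $k$.} Emptiness of $\mathcal L^*(k)$ when $k-1$ is not a multiple of $g-1$ is part of Lemma \ref{top.argument2}. Now let $k-1=d(g-1)$ and fix $p_1,p_2\in C_g(k)$. Using the local isometry $(x,y)\mapsto(p_1(x),p_2(y))$ from $(S\times S,p_1^*\sigma\oplus p_2^*\rho)$, as in \eqref{isometry.same}, we get
$$\text{area}_h(\Pi(p_1,p_2,\phi))=A_S(\phi^*p_1^*\sigma,p_2^*\rho).$$
Proposition \ref{prop2} applied on $S$, together with Rafi--Souto on $S$, then gives
$$\#E_{p_1,p_2}(A)\sim \frac{B(p_1^*\sigma)B(p_2^*\rho)}{b_k}A^{6(k-1)},\qquad E_{p_1,p_2}(A)=\{\phi\in\text{Map}(S):A_S(\phi^*p_1^*\sigma,p_2^*\rho)\leq A\}.$$
By Lemma \ref{top.argument2}, the classes with fixed $(p_1,p_2)$ correspond to orbits of $D(p_1)\times D(p_2)$ acting on $\text{Map}(S)$ by $\phi\mapsto T_1\phi T_2$. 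This action preserves $E_{p_1,p_2}(A)$, because deck transformations are isometries of the pulled-back metrics. Burnside's lemma gives
$$\#\{\text{orbits}\}=\frac{1}{|D(p_1)||D(p_2)|}\sum_{(T_1,T_2)}\#\{\phi\in E_{p_1,p_2}(A): T_1\phi T_2=\phi\}.$$
Summing over $(p_1,p_2)$ will then give the stated formula, provided the non-identity terms are negligible.

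\emph{Main obstacle: the fixed-point terms.} We must show that every term with $(T_1,T_2)\neq(\mathrm{Id},\mathrm{Id})$ is $o(A^{6(k-1)})$. The equation $T_1\phi T_2=\phi$ says $\phi^{-1}T_1^{-1}\phi=T_2$. If it has a solution $\phi_0$, the full solution set is $Z(T_1)\phi_0$, where $Z(T_1)$ is the centralizer of the finite-order class $T_1$ in $\text{Map}(S)$. (If exactly one of $T_1,T_2$ is trivial there are no solutions.) By Birman--Hilden-type results, $Z(T_1)$ is commensurable with the mapping class group of the quotient $S/\langle T_1\rangle$, which has genus $r<k$. Its orbit in $\mathcal T(S)$ lies in the fixed locus of $T_1$, which is a copy of $\mathcal T(S/\langle T_1\rangle)$ of dimension $6(r-1)<6(k-1)$. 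I would therefore aim for a bound of the form
$$\#\{\psi\in Z(T_1): i(\psi\phi_0(\lambda),\lambda')\leq A\}=O(A^{6(r-1)}),$$
proved by descending to the quotient surface. Equivalently, one can use a polynomial upper bound for orbit counts in terms of $\dim\mathcal{ML}$ of the quotient, obtained from the lattice-point structure of the train-track charts $T,T^*$ of $\mathcal{ML}$ restricted to the $T_1$-invariant sublocus. This is the step I expect to require the most care. Once it holds, dividing by $A^{6(k-1)}$ and letting $A\to\infty$ finishes the proof.
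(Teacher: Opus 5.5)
Your proposal is correct and follows the paper's proof. The genus $g$ case goes through Proposition \ref{prop2} and the Rafi--Souto orbit count, the general case uses Burnside's lemma over $D(p_1)\times D(p_2)$-orbits in $\text{Map}(S)$, and the non-identity terms are killed by descending to the genus-$r$ quotient. The step you leave as a plan is carried out in Lemma \ref{cover.lemma.lagrangian}: each $(T_1,T_2)$-fixed $\phi$ (your coset $Z(T_1)\phi_0$) descends, at most $m$-to-one, to some $\psi\in\text{Map}(W_2)$ with $W_2=S/\langle T_2\rangle$ and $\text{area}_h(\Pi(p_1,p_2,\phi))=m\,\text{area}_h(\Pi(q_1,q_2,\psi))$, so the term is at most $m\,\#E_{q_1,q_2}(A/m)=O(A^{6(r-1)})$; only this finite-to-one descent is needed, not commensurability of the centralizer or any train-track lattice counting.
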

	\begin{proof}
		If $k-1$ is not a positive multiple of $g-1$ we have from Lemma \ref{top.argument2} that $\mathcal L^*(k)=\emptyset$.

		Consider the homogeneous function
		$$\Lambda:\mathcal C(\Sigma)\rightarrow [0,\infty), \quad \Lambda(\lambda)=l_\sigma(\lambda)=i(\lambda_{\sigma},\lambda).$$
		Rafi--Souto \cite{souto-rafi} further extended the work of Mirzakhani \cite{mirza} and showed 
		\begin{equation}\label{count.mirza}
			\lim_{A\to\infty}\frac{\#\{\phi\in \text{Map}(\Sigma):\Lambda(\phi(\lambda_\rho))\leq A\}}{A^{6(g-1)}}=\frac{B(\rho)B(\sigma)}{b_g}.
		\end{equation}
		We have from Proposition \ref{prop2} that
		\begin{multline*}
			\#\{\phi\in \text{Map}(\Sigma):\Lambda(\lambda_{\phi_*\rho})+\area_\sigma(\Sigma)+\area_\rho(\Sigma)
			\leq A\}\\
			\leq \#\{\phi\in \text{Map}(\Sigma):A(\sigma,\phi_*\rho)\leq A\}\leq \#\{\phi\in \text{Map}(\Sigma):\Lambda(\lambda_{\phi_*\rho})\leq A\}.
		\end{multline*}
		Thus, using the fact that   $\lambda_{\phi_*\rho}=\phi(\lambda_\rho)$, we obtain from \eqref{count.mirza} that 
		\begin{equation}\label{count.mirza2}
			\lim_{A\to\infty}\frac{\#\{\phi\in \text{Map}(\Sigma):A(\sigma,\phi_*\rho)\leq A\}}{A^{6(g-1)}}=\frac{B(\rho)B(\sigma)}{b_g}.
		\end{equation}
		From Lemma \ref{top.argument2}  we have the disjoint decomposition  
		$$ \mathcal L^*(g)=\cup_{\phi\in \text{Map}(\Sigma)}\{\Pi(\phi)\}.$$
		
		\begin{lemm}\label{area.homotopic.class} For every representative of $\phi\in \text{Map}(\Sigma)$ we have $$\text{area}_{\sigma\oplus\rho}(\Pi(\phi))=A(\sigma,\phi_*\rho).$$
		\end{lemm}
		\begin{proof}
			Consider the diffeomorphism 
			$$F:\Sigma\times\Sigma\rightarrow \Sigma\times \Sigma,\quad F(x,y)=(x,\phi^{-1}(y)).$$
			We have $F^*(\sigma\oplus\rho)=\sigma\oplus\phi_*\rho$. If   $(\Sigma,\iota)\in \Pi_0$ then $(\Sigma,F\circ\iota)\in \Pi(\phi)$ and
			$$\text{area}_{(F\circ\iota)^*(\sigma\oplus\rho)}(\Sigma)=\text{area}_{\iota^*(\sigma\oplus\phi_*\rho)}(\Sigma)\implies\text{area}_{\sigma\oplus\rho}(\Pi(\phi))\leq A(\sigma,\phi_*\rho).$$
			If   $(\Sigma,\iota)\in \Pi(\phi)$ then $(\Sigma,F^{-1}\circ\iota)\in \Pi_0$ and
			$$\text{area}_{(F^{-1}\circ\iota)^*(\sigma\oplus\phi_*\rho)}(\Sigma)=\text{area}_{\iota^*(\sigma\oplus\rho)}(\Sigma)\implies A(\sigma,\phi_*\rho)\leq \text{area}_{\sigma\oplus\rho}(\Pi(\phi)).$$
		\end{proof}

		Using Lemma \ref{area.homotopic.class} we see that
		$$\#\{\Pi\in \mathcal L^*(g):\text{area}_{\sigma\oplus\rho}(\Pi)\leq A\}=\#\{\phi\in \text{Map}(\Sigma):A(\sigma,\phi_*\rho)\leq A\}.
		$$
		This identity and \eqref{count.mirza2} imply \eqref{genusg.couting}.

		\begin{prop}\label{higher.genus.count.lagrangian} Assume that $k-1$ is a positive multiple of $g-1$. Then
			$$
			\lim_{A\to\infty}\frac{\#\{\Pi\in \mathcal L^*(k):\text{area}_{\sigma\oplus\rho}(\Pi)\leq A\}}{A^{6(k-1)}}
			=\sum_{p_1,p_2\in C_g(k)}\frac{B(p_1^*\sigma)B(p_2^*\rho)}{|D(p_1)||D(p_2)|b_k}.
			$$
		\end{prop}
		\begin{proof}
		Consider  a surface $S$ with genus $k$. In what follows we use the description of $\mathcal L^*(k)$ given in Lemma \ref{top.argument2}. For  $p_1,p_2\in C_g(k)$  let $\mathcal L^*_{p_1,p_2}(k)\subset \mathcal L^*(k)$ be the subset consisting of those elements represented by $\Pi(p_1,p_2,\phi)$, $\phi\in \text{Map}(S)$. We have the disjoint union
			$$
			\mathcal L^*(k)=\bigcup_{p_1,p_2\in C_g(k)}\mathcal L^*_{p_1,p_2}(k).
			$$
			The assignment $\phi\in  \text{Map}(S)\mapsto \Pi(p_1,p_2,\phi)\in \mathcal L^*_{p_1,p_2}(k)$ induces a bijection between $\mathcal L^*_{p_1,p_2}(k)$ and the set of $D(p_1)\times D(p_2)$-orbits in $\text{Map}(S)$, where each orbit is
			$$
			(D(p_1)\times D(p_2))\cdot\phi=\{T_1\circ \phi\circ T_2:T_1\in D(p_1),T_2\in D(p_2)\}.
			$$
			
			Fix $p_1,p_2\in C_g(k)$. Consider the local isometry between $p_1^*\sigma\oplus p_2^*\rho$ and $\sigma\oplus \rho$ given by
			$$
			F_{p_1,p_2}:S\times S\rightarrow \Sigma\times \Sigma,\quad F_{p_1,p_2}(x,y)=(p_1(x),p_2(y)).
			$$
			Choose a representative of $\phi\in \text{Map}(S)$ and set 
			$$
			\hat\iota(\phi):S\to S\times S, \quad \hat\iota(\phi)(x)=(\phi(x),x).
			$$
			Then $F_{p_1,p_2}\circ \hat\iota(\phi)$ is the immersion inducing $\Pi(p_1,p_2,\phi)$. Every $(S,\iota)\in \Pi(p_1,p_2,\phi)$ lifts to $S\times S$ and thus, if $\hat\Pi(\phi)$ denotes the homotopy class of $\hat\iota(\phi)$ in $S\times S$, we have
			$$
			\text{area}_{\sigma\oplus \rho}(\Pi(p_1,p_2,\phi))=\text{area}_{p_1^*\sigma\oplus p_2^*\rho}(\hat\Pi(\phi)).
			$$
			Given $A>0$, set
			\begin{align*}
				E_{p_1,p_2}(A)&=\{\phi\in \text{Map}(S):\text{area}_{\sigma\oplus \rho}(\Pi(p_1,p_2,\phi))\leq A\},\\
				N_{p_1,p_2}(A)&=\{\Pi\in \mathcal L^*_{p_1,p_2}(k):\text{area}_{\sigma\oplus \rho}(\Pi)\leq A\}.
			\end{align*}
			Arguing exactly as in the proof of \eqref{genusg.couting}, but now on  $S\times S$ with the metric $p_1^*\sigma\oplus p_2^*\rho$, we obtain
			\begin{align}\label{growth.multiples}
				\lim_{A\to\infty}\frac{\#E_{p_1,p_2}(A)}{A^{6(k-1)}}&=\lim_{A\to\infty}\frac{\#\{\phi\in \text{Map}(S):\text{area}_{p_1^*\sigma\oplus p_2^*\rho}(\hat\Pi(\phi))\leq A\}}{A^{6(k-1)}}\\ \notag
				&=\frac{B(p_1^*\sigma)B(p_2^*\rho)}{b_k}.
			\end{align}
			
			Lemma \ref{top.argument2} implies that $\#N_{p_1,p_2}(A)$ is equal to the number of $D(p_1)\times D(p_2)$-orbits in $E_{p_1,p_2}(A)$. By Burnside's lemma,
			\[
			\#N_{p_1,p_2}(A)=\frac{1}{|D(p_1)||D(p_2)|}\sum_{(T_1,T_2)\in D(p_1)\times D(p_2)}\#E_{p_1,p_2}^{T_1,T_2}(A),
			\]
			where
			\[
			E_{p_1,p_2}^{T_1,T_2}(A)=\{\phi\in E_{p_1,p_2}(A):T_1\circ \phi\circ T_2=\phi\text{ in }\text{Map}(S)\}.
			\]
			
			We have $E_{p_1,p_2}^{Id,Id}(A)=E_{p_1,p_2}(A)$. Suppose now that $(T_1,T_2)\neq (Id,Id)$. The idea is to show that
			$$
			\#E_{p_1,p_2}^{T_1,T_2}(A)=o(A^{6(k-1)}).
			$$
			If $E_{p_1,p_2}^{T_1,T_2}(A)=\emptyset$ for every $A$, there is nothing to prove. Otherwise choose $\phi_0\in \text{Map}(S)$ such that
			$
			T_1\circ \phi_0\circ T_2=\phi_0
			$
			in $\text{Map}(S)$. Then
			$
			T_1=\phi_0\circ T_2^{-1}\circ \phi_0^{-1}
			$
			in $\text{Map}(S)$. In particular, $T_1$ and $T_2$ have the same order $m>1$. 
			
			Consider $W_i=S/\langle T_i\rangle$,  which has genus $r<k$, and  let $w_i:S\rightarrow W_i$ be the regular  cover of degree $m$. We have $\langle T_i\rangle < D(p_i)$ and so we find $u_i:W_i\rightarrow \Sigma$ finite covers with $p_i=u_i\circ w_i$, $i=1,2$.
			
			\begin{lemm}\label{cover.lemma.lagrangian} 
				There are finite covers $q_1,q_2: W_2\rightarrow \Sigma$ so that for every $\phi \in E_{p_1,p_2}^{T_1,T_2}(A)$ we find $\psi\in E_{q_1,q_2}(A/m)$ with $w_2\circ \phi_0^{-1}\circ \phi$ homotopic to $\psi\circ w_2$. For each $\psi\in \text{Map}(W_2)$ there are at most $m$ such $\phi$.
			\end{lemm}
			\begin{proof}
				Since
				$
				T_1\circ \phi_0\circ T_2=\phi_0
				$
				in $\text{Map}(S)$, the mapping class $\phi_0$ conjugates $\langle T_2\rangle$ onto $\langle T_1\rangle$, and therefore descends to a homeomorphism $\bar \phi_0:W_2\rightarrow W_1$. Set
				$$
				q_1=u_1\circ \bar \phi_0\quad\mbox{and}\quad q_2=u_2.
				$$
				Take $\phi\in E_{p_1,p_2}^{T_1,T_2}(A)$. Then $\phi$  descends to a homeomorphism $\bar \phi:W_2\rightarrow W_1$ and we set $\psi=\bar \phi_0^{-1}\circ \bar \phi.$  It follows that
				\begin{align}\label{homopties.lemma}
					p_1\circ \phi&= u_1\circ w_1\circ \phi= u_1\circ \bar \phi \circ w_2=q_1\circ \psi\circ w_2,\\ \notag
					p_2&=u_2\circ w_2=q_2\circ w_2.
				\end{align}
				Consider $(S,\iota)\in\Pi(p_1,p_2,\phi)$. From the proof of Lemma \ref{top.argument2} we see the existence of a diffeomorphism $T$ so that $\iota\circ T\simeq (p_1\circ\phi,p_2)$. Thus we obtain from \eqref{homopties.lemma} the existence of $(W_2,\hat \iota)\in \Pi(q_1,q_2,\psi)$ such that $\iota\circ T\simeq\hat\iota\circ w_2$. Likewise, if $(W_2,\hat \iota)\in \Pi(q_1,q_2,\psi)$, then  $(S,\hat\iota\circ w_2)\in\Pi(p_1,p_2,\phi)$.
				
				The map $w_2$ is a  cover of degree $m$ and so, from uniqueness of least-area immersions \cite{markovic}, 
				$$\text{area}_{\sigma\oplus\rho}(\Pi(p_1,p_2,\phi))
				=m\,\text{area}_{\sigma\oplus\rho}(\Pi(q_1,q_2,\psi)).
				$$
				Hence $\psi\in E_{q_1,q_2}(A/m)$.
				
				Finally, after choosing a representative of $\psi$, its lifts through the regular cover $w_2$ differ by deck transformations. Hence there are at most $m$ such lifts $\eta$, and therefore at most $m$ corresponding $\phi=\phi_0\circ \eta$.

			\end{proof}
			Arguing exactly as in  \eqref{growth.multiples}, but now on the genus $r$ surface $W_2$, we obtain
			\[
			\#E_{p_1,p_2}^{T_1,T_2}(A)\leq m \#E_{q_1,q_2}(A/m)=O(A^{6(r-1)})=o(A^{6(k-1)}).
			\]
			
			Dividing the Burnside formula by $A^{6(k-1)}$ and letting $A\to\infty$, we obtain
			\[
			\lim_{A\to\infty}\frac{\#N_{p_1,p_2}(A)}{A^{6(k-1)}}=\lim_{A\to\infty}\frac{\#E_{p_1,p_2}(A)+o(A^{6(k-1)})}{|D(p_1)||D(p_2)|A^{6(k-1)}}=\frac{B(p_1^*\sigma)B(p_2^*\rho)}{|D(p_1)||D(p_2)|b_k}.
			\]
			Summing over the disjoint union $\mathcal L^*(k)=\cup_{p_1,p_2\in C_g(k)}\mathcal L^*_{p_1,p_2}(k)$ we obtain the stated formula.
		\end{proof}
	\end{proof}
\subsection{Proof of Theorem \ref{rigidity.thm}}
	
	{To prove the following theorem we study the asymptotic behavior of homotopy classes corresponding to large powers of Dehn twists}  (see \cite{slegers}).
	\begin{thm}
		Consider pairs of nonpositively curved metrics $(\sigma,\rho)$ and $(\hat\sigma,\hat \rho)$  so that
		$$A(\sigma\oplus\rho)=A(\hat\sigma\oplus\hat\rho).$$
		Then there is $c_0>0$ so that  $\sigma$ and $c_0\rho$ have the same simple length spectrum as $c_0\hat \sigma$ and $\hat \rho$, respectively.
	\end{thm}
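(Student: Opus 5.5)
We extract lengths from the area spectrum through the Dehn-twist asymptotics \eqref{dehn.twist.limit}. The equality $A(\sigma\oplus\rho)=A(\hat\sigma\oplus\hat\rho)$, read through Lemma \ref{area.homotopic.class}, says that $A(\sigma,\phi_*\rho)=A(\hat\sigma,\phi_*\hat\rho)$ for every $\phi\in\text{Map}(\Sigma)$. By Proposition \ref{prop2} both sides equal $i(\lambda_\sigma,\phi(\lambda_\rho))+O(1)$ and $i(\lambda_{\hat\sigma},\phi(\lambda_{\hat\rho}))+O(1)$, respectively, with $O(1)$ bounded by the total areas and independent of $\phi$. Hence
$$\bigl|i(\lambda_\sigma,\phi(\lambda_\rho))-i(\lambda_{\hat\sigma},\phi(\lambda_{\hat\rho}))\bigr|\leq C\quad\text{for all }\phi\in\text{Map}(\Sigma).$$
The plan is to take $\phi=D_\alpha^n$ (and more generally products $D_\alpha^nD_\beta^{n'}$), divide by $n$, and pass to the limit.

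\textbf{Step 1.} For $\alpha\in\mathcal S$, apply \eqref{dehn.twist.limit} to $\tau=\lambda_\rho$. Dividing by $n$ and using continuity of $i$, we get
$$l_\sigma(\alpha)\,l_\rho(\alpha)=l_{\hat\sigma}(\alpha)\,l_{\hat\rho}(\alpha)\quad\text{for every }\alpha\in\mathcal S.$$
This alone is not enough, so we use two independent twists. For disjoint or intersecting $\alpha,\beta\in\mathcal S$, take $\phi=D_\alpha^{n}D_\beta^{n'}$ with $n,n'\to\infty$ at comparable rates. Alternatively, take $\phi_n=D_\alpha^n$ and use the identity $i(\lambda_\sigma,D_\alpha^n\lambda_\rho)=i(D_\alpha^{-n}\lambda_\sigma,\lambda_\rho)$, which allows twisting on either side.

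The cleaner route is to take $\phi=D_\beta^{-m}D_\alpha^{n}$ and let $n\to\infty$ first. This gives $i(D_\beta^m\lambda_\sigma,\delta_\alpha)\,l_\rho(\alpha)/1$ up to $O(1/n)$; the error $C/n$ vanishes, so
$$i(\lambda_\sigma,D_\beta^{-m}\delta_\alpha)\,l_\rho(\alpha)=i(\lambda_{\hat\sigma},D_\beta^{-m}\delta_\alpha)\,l_{\hat\rho}(\alpha).$$
Dividing by $m$ and letting $m\to\infty$, \eqref{dehn.twist.limit} gives
$$i(\alpha,\beta)\,l_\sigma(\beta)\,l_\rho(\alpha)=i(\alpha,\beta)\,l_{\hat\sigma}(\beta)\,l_{\hat\rho}(\alpha).$$

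\textbf{Step 2.} Hence, whenever $i(\alpha,\beta)\neq0$,
$$\frac{l_\sigma(\beta)}{l_{\hat\sigma}(\beta)}=\frac{l_{\hat\rho}(\alpha)}{l_\rho(\alpha)}.$$
Combined with Step 1 (take $\beta=\alpha$ there, so $l_\sigma(\alpha)/l_{\hat\sigma}(\alpha)=l_{\hat\rho}(\alpha)/l_\rho(\alpha)$), set $f(\gamma)=l_\sigma(\gamma)/l_{\hat\sigma}(\gamma)$. Then $f(\beta)=f(\alpha)$ whenever $i(\alpha,\beta)\neq 0$. The graph on $\mathcal S$ with edges given by nonzero intersection is connected: any two curves are both intersected by some third curve, e.g. a curve filling together with them. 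So $f$ is a constant $c_0$. This gives $l_\sigma=c_0l_{\hat\sigma}$, and $l_{\hat\rho}=c_0l_\rho$, which is the statement.

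\textbf{Main obstacle.} The delicate step is the iterated limit in Step 1. We must justify that the bounded error $C$ survives division by $n$ after a fixed $D_\beta^{-m}$ has been composed, which is fine since $C$ is uniform in $\phi$. We must also use $\mathcal{ML}$-valued limits correctly: \eqref{dehn.twist.limit} is applied to the current $D_\beta^{-m}\lambda_\sigma$ rather than to $\lambda_\rho$, which is legitimate because \eqref{dehn.twist.limit} holds for all $\tau\in\mathcal C(\Sigma)$. Finally, the connectivity claim for the intersection graph needs a short check (e.g. via filling pairs), which I expect to be routine.
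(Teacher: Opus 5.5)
Your proof is correct, and it reaches the conclusion by a different final step than the paper.

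\textbf{The paper's route.} The paper uses a single twist, the mapping class $D_\gamma^{-n}\psi^{-1}$. This gives
\[
l_\sigma([\gamma])\,l_\rho([\psi(\gamma)])=l_{\hat\sigma}([\gamma])\,l_{\hat\rho}([\psi(\gamma)])\quad\text{for all }[\gamma]\in\mathcal S,\ \psi\in\text{Map}(\Sigma).
\]
It then uses two density facts: a single orbit $\mathcal O(\gamma)$ is dense in $\mathcal{ML}(\Sigma)$, and weighted simple curves are dense. From these it obtains the bilinear identity $l_\sigma(\mu)l_\rho(\nu)=l_{\hat\sigma}(\mu)l_{\hat\rho}(\nu)$ for all $\mu,\nu\in\mathcal{ML}(\Sigma)$, and separates variables.

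\textbf{Your route.} Your identity with $D_\beta^{-m}$ is the special case $\gamma=D_\beta^{-m}(\alpha)$, $\psi=D_\beta^{m}$ of the paper's identity. Instead of invoking orbit density, you apply a second twist. This makes $l_\sigma([D_\beta^{-m}(\alpha)])$ asymptotic to $m\,i(\delta_\alpha,\delta_\beta)\,l_\sigma([\beta])$. You then finish with the elementary fact that any two simple closed curves are both crossed by a third. For instance, take $\phi^n(c)$ for a pseudo-Anosov $\phi$ and any curve $c$.

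\textbf{Comparison.} Your argument replaces the orbit-density theorem and the passage to $\mathcal{ML}(\Sigma)$ with a double Dehn-twist limit and a connectivity argument. It is more elementary. The paper's route gives the stronger bilinear identity on all of $\mathcal{ML}(\Sigma)\times\mathcal{ML}(\Sigma)$. That same structure is reused in Lemma \ref{limits.equal} of the appendix.

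\textbf{Two small points.}
\begin{itemize}
\item The $m\to\infty$ limit is cleanest written as $i(D_\beta^{m}\lambda_\sigma,\delta_\alpha)/m\to l_\sigma([\beta])\,i(\delta_\beta,\delta_\alpha)$. This needs \eqref{dehn.twist.limit} only for positive powers, applied to $\lambda_\sigma$. It also corrects your remark about which current it is applied to.
\item Your conclusion $l_\sigma=c_0\,l_{\hat\sigma}$, $l_{\hat\rho}=c_0\,l_\rho$ is not literally the statement. Scaling a metric by $c_0$ scales lengths by $\sqrt{c_0}$, so you must take the constant in the theorem to be $c_0^2$. The paper sets $c_0=c^2$ for exactly this reason.
\end{itemize}
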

	\begin{proof}
		Recall that $\mathcal S$ denotes the conjugacy classes of simple closed curves. Consider a conjugacy class $[\gamma]\in\mathcal S$ and the Dehn twist $D_\gamma\in {\text{Map}(\Sigma)}$. 
	
	With $\phi_n=(D_\gamma^n)^{-1}\in \text{Map}(\Sigma)$, $n\in\N$, and $\psi\in \text{Diff}_+(\Sigma)$, we have from Lemma \ref{area.homotopic.class} that
		\begin{equation}\label{homotopie.invariance}
			A(\sigma\oplus\rho)(\Pi(\phi_n\psi^{-1}))=A(\sigma,(\phi_n\psi^{-1})_*\rho)=A(\sigma,(\phi_n)_*(\psi^*\rho)).
		\end{equation}
		Combining with Proposition \ref{prop2} and using $\lambda_{(\phi_n)_*\psi^*\rho}=\phi_n(\lambda_{\psi^*\rho})$ we see that 
		\begin{align*}
			\lim_{n\to\infty}\frac{A(\sigma\oplus\rho)(\Pi(\phi_n\psi^{-1}))}{n}&=\lim_{n\to\infty}\frac{A(\sigma,(\phi_n)_*(\psi^*\rho))}{n}\\
			&= \lim_{n\to\infty}\frac{i(\lambda_\sigma,\lambda_{(\phi_n)_*\psi^*\rho})}{n}=\lim_{n\to\infty}\frac{i(\phi_n^{-1}(\lambda_\sigma),\lambda_{\psi^*\rho})}{n}.
		\end{align*}
		From \eqref{dehn.twist.limit} we know that 
		$$
		\lim_{n\to\infty}\frac{\phi_n^{-1}(\lambda_{\sigma})}{n}=\lim_{n\to\infty}\frac{D_\gamma^n(\lambda_\sigma)}{n}= i(\lambda_\sigma,\delta_\gamma)\delta_\gamma
		=l_\sigma([\gamma])\delta_\gamma.
		$$
		Therefore
		$$
		\liminf_{n\to\infty}\frac{A(\sigma\oplus\rho)(\Pi(\phi_n\psi^{-1}))}{n}
		=l_\sigma([\gamma])\,l_{\psi^*\rho}([\gamma])=l_\sigma([\gamma])\,l_{\rho}([\psi(\gamma)]).
		$$
Therefore, the equality $A(\sigma\oplus\rho)=A(\hat\sigma\oplus\hat\rho)$   implies that 
		\begin{equation}\label{equality.curves}
			l_\sigma([\gamma])\,l_{\rho}([\psi(\gamma)])
			=
			l_{\hat \sigma}([\gamma])\,l_{\hat\rho}([\psi(\gamma)])  \quad\text{for all }[\gamma]\in \mathcal S, \psi\in\text{Map}(\Sigma).
		\end{equation}
		Consider the orbit 
		\begin{equation}\label{orbit}
			\mathcal O(\gamma)=\{t\psi(\delta_\gamma):t>0,\psi \in \text{Map}(\Sigma)\}\subset \mathcal{ML}(\Sigma). 
		\end{equation}
		The set $\mathcal O(\gamma)$ is dense in $\mathcal{ML}(\Sigma)$ by Theorem~6.19 in \cite{fathi-book}. Combining with the denseness of $\{t\delta_\gamma:t>0,\gamma \in \mathcal S\}$ in $\mathcal{ML}(\Sigma)$ we obtain from \eqref{equality.curves} that
		\begin{equation*}
			l_\sigma(\mu)\,l_{\rho}(\nu)
			=
			l_{\hat \sigma}(\mu)\,l_{\hat\rho}(\nu) \quad\text{for all }\mu,\nu \in\mathcal{ML}(\Sigma).
		\end{equation*}
		Choose $\nu_0\in\mathcal{ML}(\Sigma)\setminus\{0\}$ and set $c=l_{\hat\rho}(\nu_0)/l_\rho(\nu_0)$. Then
		\begin{equation*}
			l_\sigma(\mu)=c\,l_{\hat \sigma}(\mu)
			\quad\text{and}\quad
			l_{\rho}(\mu)=c^{-1}l_{\hat\rho}(\mu)
			\quad\text{for all }\mu\in \mathcal{ML}(\Sigma).
		\end{equation*}
		Setting $c_0=c^2$, we obtain the desired result.
	\end{proof}
	
\subsection{Proof of Corollary \ref{area.rigidity}.}
	
	\begin{cor}
		Consider K\"{a}hler--Einstein metrics $h=\sigma\oplus\tau$ and $\bar h=\bar\sigma\oplus \bar\tau$, not necessarily with the same Einstein constant, and so that
		$$
		A(h)=A(\bar h).
		$$
		Then $h$ and $\bar h$ are isometric and the isometry is homotopic to the identity.
	\end{cor}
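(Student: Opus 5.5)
The plan is to combine Theorem \ref{rigidity.thm} with the rigidity of hyperbolic metrics under the simple length spectrum. A K\"ahler--Einstein metric on $\Sigma\times\Sigma$ with Einstein constant $-\kappa<0$ splits, up to diffeomorphism (as in the footnote in the introduction), as $h=\sigma\oplus\tau$. Here $\sigma$ and $\tau$ both have constant curvature $-\kappa$, so both are scalings of hyperbolic metrics by the same factor $1/\kappa$. Similarly, $\bar h=\bar\sigma\oplus\bar\tau$ with $\bar\sigma,\bar\tau$ of constant curvature $-\bar\kappa$. These metrics are negatively curved, so Theorem \ref{rigidity.thm} applies to the pairs $(\sigma,\tau)$ and $(\bar\sigma,\bar\tau)$. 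It produces $c_0>0$ such that $\sigma$ and $c_0\bar\sigma$ have the same simple length spectrum, and $c_0\tau$ and $\bar\tau$ have the same simple length spectrum.

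The first step is to pin down $c_0$ by curvature. I will use the fact that the simple length spectrum determines the Liouville current restricted to $\mathcal{ML}(\Sigma)$. For constant curvature metrics it determines the point of Teichm\"uller space together with the scale; for hyperbolic metrics this follows from the $9g-9$ theorem, or equivalently from injectivity of $\mathcal T(\Sigma)\to \mathbb{P}(\R^{\mathcal S})$ with scale recorded. Two constant negative curvature metrics with equal simple length spectra must therefore have the same curvature. The curvature can be recovered, for instance, via the area through Gauss--Bonnet, or by observing that distinct curvatures would give proportional but unequal length spectra of hyperbolic metrics. Proportional simple length spectra $l_{\sigma_1}=c\,l_{\sigma_2}$ of two hyperbolic metrics force $c=1$. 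To see this, I will use the fact that hyperbolic length functions are not proportional unless they are equal. A clean way is the collar lemma or trace identities, but more simply: $i(\lambda_{\sigma_1},\lambda_{\sigma_1})=\pi^2|\chi|$ for both, and proportionality of length functions on $\mathcal S$ extends to $l_{\sigma_1}=c\,l_{\sigma_2}$ on all of $\mathcal{ML}$. I expect this to be the main obstacle, since one must deduce $c=1$ from it. I plan to use Thurston's result that the map $\mathcal T(\Sigma)\to \mathbb P\R^{\mathcal S}_+$ is an embedding, with image disjoint from $\mathbb{P}\mathcal{ML}$, together with the fact that the set of length functions of hyperbolic metrics in $\R^{\mathcal S}_+$ meets each ray at most once. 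The latter holds because rescaling by $c\neq 1$ changes curvature, and a hyperbolic metric is determined by its unprojectivized simple lengths via Fenchel--Nielsen coordinates, which are real-analytic and not scale-invariant.

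Concretely, write $\sigma=\kappa^{-1}\sigma_0$, $\tau=\kappa^{-1}\tau_0$, $\bar\sigma=\bar\kappa^{-1}\bar\sigma_0$ and $\bar\tau=\bar\kappa^{-1}\bar\tau_0$ with $\sigma_0,\tau_0,\bar\sigma_0,\bar\tau_0$ hyperbolic. The two conclusions of the theorem give $l_{\sigma_0}=a\,l_{\bar\sigma_0}$ and $l_{\tau_0}=a^{-1}l_{\bar\tau_0}$ on $\mathcal S$, with $a=c_0\sqrt{\kappa/\bar\kappa}$. By the previous step, hyperbolic metrics with proportional simple length spectra are equal in $\mathcal T(\Sigma)$ and have proportionality constant $1$, so $a=1$. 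Hence $\sigma_0=\bar\sigma_0$ and $\tau_0=\bar\tau_0$ in $\mathcal T(\Sigma)$.

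It remains to show $\kappa=\bar\kappa$. I will apply the hypothesis to $\Pi_0$ and use scaling: $A(h)(\Pi_0)=\kappa^{-1}A(\sigma_0,\tau_0)$ and $A(\bar h)(\Pi_0)=\bar\kappa^{-1}A(\sigma_0,\tau_0)$. Since these are equal and positive, $\kappa=\bar\kappa$. Therefore $\sigma$ is isotopic to $\bar\sigma$ and $\tau$ is isotopic to $\bar\tau$ via diffeomorphisms homotopic to the identity, and the product of these diffeomorphisms is an isometry from $h$ to $\bar h$ homotopic to the identity.
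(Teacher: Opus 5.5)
Your proposal is correct and is essentially the paper's argument. You apply Theorem \ref{rigidity.thm}, rescale to hyperbolic metrics, and use Thurston's injectivity of $\mathcal T(\Sigma)\to\mathbb P\R^{\mathcal S}_+$ on each factor (the paper cites this as Thurston's Compactification Theorem) to force each proportionality constant to be $1$. The ``each ray is met at most once'' statement is exactly what that projective injectivity gives, so you can drop the Fenchel--Nielsen ``not scale-invariant'' remark, which does not prove it, and the Gauss--Bonnet aside.

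There is one bookkeeping slip. The constants are $\sqrt{c_0\kappa/\bar\kappa}$ for $\sigma_0$ and $\sqrt{\kappa/(c_0\bar\kappa)}$ for $\tau_0$, not $a$ and $a^{-1}$. Setting both equal to $1$ therefore already gives $\kappa=\bar\kappa$ and $c_0=1$, as in the paper. Your alternative route, comparing $\text{area}_h(\Pi_0)=\kappa^{-1}A(\sigma_0,\tau_0)$ with the corresponding quantity for $\bar h$, is also valid.
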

	\begin{proof}
		Since $h$ and $\bar h$ are K\"{a}hler--Einstein, there exist $c,\bar c>0$ and hyperbolic metrics $\sigma,\tau,\bar\sigma,\bar\tau$ on $\Sigma$ such that
		$$
		h=c(\sigma\oplus\tau)\quad\text{and}\quad \bar h=\bar c(\bar\sigma\oplus\bar\tau).
		$$
		The hypothesis $A(h)=A(\bar h)$ can therefore be written as
		$$
		A(c\sigma\oplus c\tau)=A(\bar c\bar\sigma\oplus \bar c\bar\tau).
		$$
		Applying Theorem \ref{rigidity.thm}, we obtain $d_0>0$ such that $c\sigma$ and $d_0\bar c\bar\sigma$ have the same simple length spectrum, and $d_0c\tau$ and $\bar c\bar\tau$ have the same simple length spectrum. Since lengths scale by the square root of the scaling factor, this means
		$$
		l_\sigma([\gamma])=\sqrt{\frac{d_0\bar c}{c}}\,l_{\bar\sigma}([\gamma])\quad\text{and}\quad l_\tau([\gamma])=\sqrt{\frac{\bar c}{d_0c}}\,l_{\bar\tau}([\gamma])
		$$
		for every simple closed curve $[\gamma]$ on $\Sigma$.
		
		By Thurston's Compactification Theorem (see \cite[Proposition~7.12]{fathi-book}), hyperbolic metrics with proportional simple length spectra determine the same element in $\mathcal T(\Sigma)$. Hence the two proportionality constants above are equal to $1$. Therefore
		$$
		c=d_0\bar c\quad\text{and}\quad \bar c=d_0c.
		$$
		It follows that $d_0=1$ and $c=\bar c$. In particular, $\sigma$ and $\bar\sigma$ have the same simple length spectrum, and so do $\tau$ and $\bar\tau$. Applying Thurston's Compactification Theorem once more, we conclude that $\sigma$ and $\bar\sigma$ determine the same point of $\mathcal T(\Sigma)$, and likewise for $\tau$ and $\bar\tau$. Thus there are diffeomorphisms $f_1,f_2:\Sigma\rightarrow\Sigma$, both homotopic to the identity, such that
		$$
		f_1^*\bar\sigma=\sigma\quad\text{and}\quad f_2^*\bar\tau=\tau.
		$$
		Consequently,
		$$
		(f_1\times f_2)^*\bar h=(f_1\times f_2)^*(c\bar\sigma\oplus c\bar\tau)=c\sigma\oplus c\tau=h.
		$$
		Hence $f_1\times f_2$ is an isometry from $(\Sigma\times\Sigma,h)$ to $(\Sigma\times\Sigma,\bar h)$, and it is homotopic to the identity.
	\end{proof}
	
\subsection{Proof of Corollary \ref{rigidity.cor}.}
	
	\begin{cor}
		Consider $h,\hat h$ nonpositively curved metrics on $\Sigma\times \Sigma$ so that for every $k\geq g$ and $\Pi\in \mathcal S^*(k)$ we have
		$$
		\text{area}_{h}(\Pi)=\text{area}_{\hat h}(\Pi).
		$$
		Then $h$ and $\hat h$ are isometric and the isometry is homotopic to the identity.
	\end{cor}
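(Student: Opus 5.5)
By Gromoll--Wolf, write $h=\sigma\oplus\rho$ and $\hat h=\hat\sigma\oplus\hat\rho$ with $\sigma,\rho,\hat\sigma,\hat\rho\in\mathcal M_{\leq 0}(\Sigma)$. The plan is to recover each factor up to marking from areas of surface subgroups, using Theorem \ref{rigidity.thm} for the ``mixed'' information and genus-$g$ subgroups lying in a single factor for the scaling. We may have to allow the decomposition of $\hat h$ to be ordered as $\hat\rho\oplus\hat\sigma$, but the identification of the factors is forced by $\pi_1$: the splitting $\pi_1(\Sigma)\times\pi_1(\Sigma)$ is fixed by the homotopy class.

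\textbf{Step 1: mixed products.} Lagrangian surface groups are in particular elements of $\mathcal S^*(g)$: $\mathcal L^*(g)\subset\mathcal S^*(g)$, since $\bar\omega$-vanishing is a homological condition on the class. Hence the hypothesis gives $A(\sigma\oplus\rho)=A(\hat\sigma\oplus\hat\rho)$. Theorem \ref{rigidity.thm} then yields $c_0>0$ such that $\sigma$ and $c_0\hat\sigma$ have the same simple length spectrum, and likewise $c_0\rho$ and $\hat\rho$.

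\textbf{Step 2: fix the scale.} Consider the class $\Pi_1\in\mathcal S^*(g)$ of the slice $x\mapsto(x,y_0)$. Every $\pi_1$-injective immersion in this class has first component homotopic to a degree-one map, so its area is at least $\area_\sigma(\Sigma)$. The slice itself attains this value, so
$$\text{area}_h(\Pi_1)=\area_\sigma(\Sigma).$$
(Alternatively, use Schoen--Yau and Eells--Sampson: the second component is harmonic and null-homotopic, hence constant.) Thus $\area_\sigma(\Sigma)=\area_{\hat\sigma}(\Sigma)$, and similarly $\area_\rho=\area_{\hat\rho}$.

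Next we need that the simple length spectrum determines area. Equal simple length spectra give $\lambda_\sigma=c_0^{1/2}\lambda_{\hat\sigma}$ on $\mathcal{ML}$. I would like to upgrade this to equality of Liouville currents, since then $i(\lambda,\lambda)=\frac{\pi}{2}\area$ gives $\area_\sigma=c_0\area_{\hat\sigma}$, hence $c_0=1$. This upgrade is the main obstacle: simple length spectrum alone does not determine a nonpositively curved metric. To get around it I would extend Step 1 to non-simple curves.

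\textbf{Step 3: full marked length spectrum.} Run the Dehn-twist argument of Theorem \ref{rigidity.thm} with higher-genus classes. For a non-simple primitive $\gamma$, pass to a finite cover $p:S\to\Sigma$ in which $\gamma$ lifts to a simple curve $\tilde\gamma$, by Scott's LERF theorem. Use the classes $\Pi(p,p,D^{-n}_{\tilde\gamma}\psi^{-1})\in\mathcal L^*(k)$. Proposition \ref{prop2}, applied on $S\times S$ with the metric $p^*\sigma\oplus p^*\rho$, gives area growth
$$n\,l_{\sigma}(p(\tilde\gamma))\,l_{\rho}(\cdot)+O(1).$$
Combining this with Step 2's scale equality then gives $l_\sigma(\gamma)=l_{\hat\sigma}(\gamma)$ for all closed curves.

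\textbf{Step 4: conclude.} Croke--Otal marked length spectrum rigidity, in Croke's nonpositively curved version, gives isometries $f_1,f_2$ homotopic to the identity. Then $f_1\times f_2$ is the required isometry.
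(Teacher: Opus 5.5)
Your overall route matches the paper's. You reduce to product metrics, apply Theorem \ref{rigidity.thm} on $\Sigma$, use the slice classes to get $\area_\sigma(\Sigma)=\area_{\hat\sigma}(\Sigma)$ and $\area_\rho(\Sigma)=\area_{\hat\rho}(\Sigma)$, pass by Scott's theorem to covers where a given curve becomes simple, and finish with Croke--Fathi--Feldman. The gap is the last sentence of Step 3. The Dehn-twist asymptotics for $\Pi(p,p,D_{\tilde\gamma}^{-n}\psi^{-1})$ only give product identities
$$l_{p^*\sigma}([\tilde\gamma])\,l_{p^*\rho}([\psi(\tilde\gamma)])=l_{p^*\hat\sigma}([\tilde\gamma])\,l_{p^*\hat\rho}([\psi(\tilde\gamma)]),\qquad \psi\in\text{Map}(S).$$
Here $p(\psi(\tilde\gamma))$ is in general a non-simple curve on $\Sigma$, where Step 1 says nothing, so you cannot divide out the $\rho$-factor using $c_0$.

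To separate the factors you must run the whole proof of Theorem \ref{rigidity.thm} on $S$, using density of $\text{Map}(S)$-orbits in $\mathcal{ML}(S)$. That yields $l_{p^*\sigma}=\sqrt{c(p)}\,l_{p^*\hat\sigma}$ and $\sqrt{c(p)}\,l_{p^*\rho}=l_{p^*\hat\rho}$ on simple curves of $S$. The constant $c(p)$ a priori depends on the cover, hence on $\gamma$.

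Step 2 does not pin down $c(p)$. On $S$ you know $\area_{p^*\sigma}(S)=\area_{p^*\hat\sigma}(S)$, but again only simple-curve information, which is exactly the obstruction you identified on $\Sigma$. With $\gamma$-dependent constants you never get proportional Liouville currents, so your area argument does not close.

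The missing idea is that $c(p)=c_0$ for every cover. Let $\alpha$ be a simple closed curve on $\Sigma$ and $\tilde\alpha$ a component of $p^{-1}(\alpha)$. Then $\tilde\alpha$ is simple in $S$, and
$$l_{p^*\sigma}([\tilde\alpha])=d\,l_\sigma([\alpha]),\qquad l_{p^*\hat\sigma}([\tilde\alpha])=d\,l_{\hat\sigma}([\alpha])$$
with the same $d$. Comparing with Step 1 forces $c(p)=c_0$.

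Only then does Scott's theorem give $l_\sigma=\sqrt{c_0}\,l_{\hat\sigma}$ and $\sqrt{c_0}\,l_\rho=l_{\hat\rho}$ on \emph{all} closed curves with a single constant. After that, either of two arguments gives $c_0=1$. One is your Liouville-current argument via $i(\lambda,\lambda)=\frac{\pi}{2}\area$. The other, which is how the paper concludes, applies Croke--Fathi--Feldman to $\sigma$ and $c_0\hat\sigma$ and then uses the equal areas.
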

	\begin{proof}
		By \cite[Theorem~2]{jost-yau} we obtain isometries homotopic to the identity from $(\Sigma\times\Sigma,h)$ and $(\Sigma\times\Sigma,\hat h)$ to product metrics
		$
		\sigma\oplus\tau\quad\text{and}\quad \hat\sigma\oplus\hat\tau,
		$
		where $\sigma,\tau,\hat\sigma,\hat\tau$ are nonpositively curved metrics on $\Sigma$. Replacing $h$ and $\hat h$ by these product metrics, we may therefore assume from now on that
		$$
		h=\sigma\oplus\tau\quad\text{and}\quad \hat h=\hat\sigma\oplus\hat\tau.
		$$
		
		Fix $x_0,y_0\in \Sigma$ and let $\Pi_1,\Pi_2\in \mathcal S^*(g)$ be the subgroup classes represented by $x\mapsto (x,y_0)$ and $x\mapsto (x_0,x)$. We claim that
		\begin{equation}\label{identical.areas}
			\text{area}_{\sigma\oplus\tau}(\Pi_1)=\text{area}_{\sigma}(\Sigma)\quad\text{and}\quad \text{area}_{\sigma\oplus\tau}(\Pi_2)=\text{area}_{\tau}(\Sigma).
		\end{equation}
		Indeed, the maps $x\mapsto (x,y_0)$ and $x\mapsto (x_0,x)$ give the upper bounds. For the lower bound, consider $(\Sigma,\iota)\in \Pi_1$ and write $\iota=(\iota_1,\iota_2)$. Since $(\iota_1)_*$ is an automorphism of $\pi_1(\Sigma)$, the map $\iota_1$ has degree $\pm 1$. Thus
		$$
		\text{area}_{\iota^*(\sigma\oplus\tau)}(\Sigma)\geq \left|\int_\Sigma \iota_1^*\omega_\sigma\right|=\text{area}_{\sigma}(\Sigma).
		$$
		The argument for $\Pi_2$ is identical. By the hypothesis of the corollary we obtain
		$$
		\text{area}_{\sigma}(\Sigma)=\text{area}_{\hat\sigma}(\Sigma)\quad\text{and}\quad \text{area}_{\tau}(\Sigma)=\text{area}_{\hat\tau}(\Sigma).
		$$
		
		We now claim that if $p:S\to \Sigma$ is a finite cover and a local diffeomorphism, then there is $c_0>0$, independent of $p$, so that
		$$
		l_{p^*\sigma}([\gamma])=\sqrt{c_0}\,l_{p^*\hat\sigma}([\gamma])\quad\text{and}\quad \sqrt{c_0}\,l_{p^*\tau}([\gamma])=l_{p^*\hat\tau}([\gamma])
		$$
		for every conjugacy class $[\gamma]$ of a simple closed curve in $S$.
		
		We first prove this when $S=\Sigma$. Since the hypothesis gives
		$$
		\text{area}_{\sigma\oplus\tau}(\Pi(\phi))=\text{area}_{\hat\sigma\oplus\hat\tau}(\Pi(\phi))
		$$
		for every $\phi\in \text{Map}(\Sigma)$, Theorem \ref{rigidity.thm} gives $c_0>0$ such that
		$$
		l_\sigma([\gamma])=\sqrt{c_0}\,l_{\hat\sigma}([\gamma])\quad\text{and}\quad \sqrt{c_0}\,l_\tau([\gamma])=l_{\hat\tau}([\gamma])
		$$
		for every simple closed curve $[\gamma]$ on $\Sigma$.
		
		Now let $p:S\to \Sigma$ be arbitrary. With the notation from the proof of Proposition \ref{higher.genus.count.lagrangian}, for every $\phi\in \text{Map}(S)$ we have
		$$
		A(p^*\sigma\oplus p^*\tau)(\hat\Pi(\phi))=\text{area}_{\sigma\oplus\tau}(\Pi(p,p,\phi))
		$$
		and likewise
		$$
		A(p^*\hat\sigma\oplus p^*\hat\tau)(\hat\Pi(\phi))=\text{area}_{\hat\sigma\oplus\hat\tau}(\Pi(p,p,\phi)).
		$$
		From Theorem \ref{rigidity.thm} (with $\Sigma$ replaced by $S$) we obtain $c(p)>0$ such that
		$$
		l_{p^*\sigma}([\gamma])=\sqrt{c(p)}\,l_{p^*\hat\sigma}([\gamma])\quad\text{and}\quad \sqrt{c(p)}\,l_{p^*\tau}([\gamma])=l_{p^*\hat\tau}([\gamma])
		$$
		for every simple closed curve $[\gamma]$ on $S$. To see that $c(p)=c_0$, let $[\alpha]$ be a simple closed curve on $\Sigma$ and let $\tilde\alpha$ be any component of $p^{-1}(\alpha)$. Then $\tilde\alpha$ is simple and for some $d\in \N$ we have
		$$
		l_{p^*\sigma}([\tilde\alpha])=d\,l_\sigma([\alpha])\quad\text{and}\quad l_{p^*\hat\sigma}([\tilde\alpha])=d\,l_{\hat\sigma}([\alpha]).
		$$
		Comparing the equalities above gives $c(p)=c_0$, proving the claim.
		
		Let now $[\gamma]$ be an arbitrary conjugacy class of $\pi_1(\Sigma)$. By \cite{scott}, there is a finite cover $p:S\to \Sigma$ and a simple closed curve $\tilde\gamma$ in $S$ whose image represents a positive power of $[\gamma]$. Thus for some $m\in \N$ we have
		$$
		l_{p^*\sigma}([\tilde\gamma])=m\,l_\sigma([\gamma])\quad\text{and}\quad l_{p^*\hat\sigma}([\tilde\gamma])=m\,l_{\hat\sigma}([\gamma]),
		$$
		and likewise for $\tau$ and $\hat\tau$. From the previous paragraph we conclude that
		$$
		l_\sigma([\gamma])=\sqrt{c_0}\,l_{\hat\sigma}([\gamma])\quad\text{and}\quad \sqrt{c_0}\,l_\tau([\gamma])=l_{\hat\tau}([\gamma])
		$$
		for every conjugacy class $[\gamma]$ of $\pi_1(\Sigma)$.
		
		By Croke--Fathi--Feldman \cite{croke2}, $(\Sigma,\sigma)$ is isometric to $(\Sigma,c_0\hat\sigma)$ and $(\Sigma,\tau)$ is isometric to $(\Sigma,c_0^{-1}\hat\tau)$, in both cases by an isometry homotopic to the identity. From \eqref{identical.areas}
		we get $c_0=1$. Therefore $\sigma$ is isometric to $\hat\sigma$ and $\tau$ is isometric to $\hat\tau$, both through isometries homotopic to the identity.
	\end{proof}
	\appendix
\section{Finiteness of area spectrum}\label{evidence.conjecture}
	
	Given an arbitrary basepoint $\sigma\in \mathcal T(\Sigma)$, consider
	\begin{equation}\label{PML}
		\mathcal{ML}(\sigma)=\{\tau \in \mathcal{ML}(\Sigma):i(\tau,\lambda_\sigma)=1\}.
	\end{equation}
	Using Proposition \ref{prop2} and Lemma \ref{area.homotopic.class}, one can see that for every divergent pair $\sigma_n,\rho_n\in \mathcal T(\Sigma)$,  there are $\varepsilon_n\to 0$ and $\mu, \nu \in \mathcal{ML}(\sigma)$ so that, after passing to a subsequence, 
	$$\lim_{n\to\infty}\varepsilon_n A(\sigma_n\oplus\rho_n)(\Pi(\phi))= i(\mu,\phi(\nu))\quad \mbox{for all }\phi\in  \text{Map}(\Sigma).$$
	\begin{thm}\label{finiteness.rigidity.thm} There is a finite set $M_0\subset \text{Map}(\Sigma)$ so that the map below is injective:
		$$\Phi:\mathcal{ML}(\sigma)\times \mathcal{ML}(\sigma)\rightarrow \R^{\# M_0},\quad \Phi(\mu,\nu)=\{i(\mu,\phi(\nu))\}_{\phi\in M_0}.$$
	\end{thm}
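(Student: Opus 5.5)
The plan is to split injectivity into two parts: first recover $\mu$, then recover $\nu$. Each part uses the fact that finitely many intersection numbers determine a measured lamination, and the two parts are coupled through finitely many mapping classes.

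Step 1 (finite determining set of curves). By the piecewise-linear structure recalled in Section~\ref{currents.section}, or the $9g-9$ type theorem for measured foliations, there is a finite set of simple closed curves $\gamma_1,\dots,\gamma_N\in\mathcal S$ such that the map
$$\lambda\mapsto (i(\lambda,\delta_{\gamma_j}))_{j}$$
is injective on $\mathcal{ML}(\Sigma)$. This is the Dehn--Thurston coordinates statement, using $\gamma_j$ and twisted versions $D_{\gamma_j}(\gamma_k)$. I will take this as the base input.

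Step 2 (extract $\mu$ via Dehn twists). For a fixed simple curve $\gamma$ and a fixed $\psi$, \eqref{dehn.twist.limit} gives
$$i(\mu,D_\gamma^n\psi(\nu))/n\to i(\psi(\nu),\delta_\gamma)\,i(\mu,\delta_\gamma).$$
This is a limit, however, and the goal is finitely many $\phi$. So I would use the linear error bound from Section~\ref{mpg.subsection}: for $n$ large,
$$\bigl|i(\mu,D^n_\gamma\psi\nu)-n\,i(\mu,\delta_\gamma)\,i(\psi\nu,\delta_\gamma)\bigr|\le i(\mu,\psi\nu),$$
with the error controlled uniformly on the compact set $\mathcal{ML}(\sigma)\times\mathcal{ML}(\sigma)$. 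Taking two values $n$ and $n+1$, the difference of the two entries gives the product $i(\mu,\delta_\gamma)\,i(\psi\nu,\delta_\gamma)$ exactly. Here I need to check from Fathi's proof that $i(\mu,D^n_\gamma\lambda)$ is exactly affine in $n$ for $n\ge1$ when $\mu,\lambda$ are laminations. I believe the twist formula gives exact piecewise-linearity in $n$, with a break only at small $n$; this is a point to verify.

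Step 3 (separate the two factors). Granting Step 2, the data determine all products
$$P_{j,\psi}=i(\mu,\delta_{\gamma_j})\,i(\psi\nu,\delta_{\gamma_j})$$
for $\psi$ in a finite set $\Psi$, where $\Psi$ is chosen so that for each $j$ some $\psi\in\Psi$ makes $i(\psi\nu,\delta_{\gamma_j})>0$. By compactness and openness of the positivity conditions, finitely many $\psi$ suffice for all $\nu\ne0$.

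From these products I would recover the ratios $i(\mu,\delta_{\gamma_j})/i(\mu,\delta_{\gamma_k})$, and hence $\mu$ up to scale by Step 1. The normalization $i(\mu,\lambda_\sigma)=1$ then fixes $\mu$. To decouple the factors I would compare products at a common $\psi$, namely $P_{j,\psi}/P_{j,\psi'}$, which is independent of $\mu$ and yields the ratios $i(\psi\nu,\delta_{\gamma_j})/i(\psi'\nu,\delta_{\gamma_j})$. It is cleaner to use $\gamma_j$ and $\psi^{-1}(\gamma_j)$ symmetrically. Once $\mu$ is known, $\nu$ follows in the same way by swapping roles, using $i(\mu,\phi\nu)=i(\phi^{-1}\mu,\nu)$.

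Main obstacle. The delicate point is handling the zero sets: $i(\mu,\delta_{\gamma_j})$ may vanish, and then the product loses information. I expect the hard step to be choosing the finite sets $\{\gamma_j\}$ and $\Psi$ uniformly over the compact space $\mathcal{ML}(\sigma)\times\mathcal{ML}(\sigma)$. I would first try a covering argument: around each pair $(\mu,\nu)$ pick curves with positive intersections, and use compactness to pass to a finite subcover. Then take $M_0$ to be the union of all $D_{\gamma_j}^{n}\psi$ and $D_{\gamma_j}^{n+1}\psi$ that were used.
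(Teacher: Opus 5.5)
Your Step 1 is fine: finitely many curves, such as the Dehn--Thurston type $9g-9$ curves, do separate points of $\mathcal{ML}(\Sigma)$. Step 2, on which everything rests, fails.

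The function $n\mapsto i(\mu,D^n_\gamma\lambda)$ is only \emph{eventually} affine with slope $i(\mu,\delta_\gamma)\,i(\lambda,\delta_\gamma)$, and the onset of that regime is not uniform on $\mathcal{ML}(\sigma)\times\mathcal{ML}(\sigma)$. Take simple closed curves $\beta,\gamma$ with $i(\delta_\beta,\delta_\gamma)>0$ and any $\lambda$ with $i(\lambda,\delta_\gamma)>0$. Let $\mu_k\in\mathcal{ML}(\sigma)$ be the normalization of $D^k_\gamma(\delta_\beta)$. Then
$$i(\mu_k,D^n_\gamma\lambda)=\frac{i(\delta_\beta,D^{n-k}_\gamma\lambda)}{l_\sigma(D^k_\gamma\beta)}.$$
By the two-sided twist estimate, for $n$ well below $k$ these numbers \emph{decrease} roughly linearly with slope $-i(\mu_k,\delta_\gamma)\,i(\lambda,\delta_\gamma)$. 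That is the negative of the product you want to read off, and the correct slope only appears for $n$ beyond roughly $k$. Already on a torus with $\alpha=(1,0)$, $i(D^n_\alpha(p,q),(r,s))=|nqs+ps-qr|$ has its corner at $n=(qr-ps)/(qs)$.

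Since $k$ is arbitrary, no fixed exponents $n,n+1$ recover the products exactly for all pairs. The uniform error bound only determines them up to $O(1/n)$, which says nothing about injectivity.

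Step 3 and the closing covering argument have further gaps:
\begin{itemize}
\item Step 3 does not actually decouple the factors. At fixed $\psi$ the ratios $P_{j,\psi}/P_{k,\psi}$ mix $\mu$ and $\nu$. At fixed $j$ you only compare $\nu$ against the curves $\psi^{-1}\gamma_j$ for that one $j$.
\item A finite subcover yields at best injectivity on each piece, not global injectivity. Two pairs in different pieces can share all the data unless the data also identify the piece.
\end{itemize}

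The missing idea is to never ask for exact recovery at a finite stage. The paper argues by contradiction along an exhaustion $M_i$ of $\text{Map}(\Sigma)$ by finite symmetric sets.
\begin{itemize}
\item If pairs $(\mu_i,\nu_i)\neq(\tilde\mu_i,\tilde\nu_i)$ agree on $M_i$, compactness gives limits satisfying $i(\mu,\phi\nu)=i(\tilde\mu,\phi\tilde\nu)$ for \emph{all} $\phi$.
\item Only at this point are the Dehn-twist limits and the density of $\mathcal O(\gamma)$ used, to show the two limits coincide. This reduces the problem to a local one.
\item The local problem is linearized in the piecewise-linear charts $T,T^*$, where $i(T(x),T^*(y))=x\cdot y$. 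The normalized difference $(X,Y)$ of the two sequences satisfies $X\cdot y(\phi)+Y\cdot x(\phi^{-1})=0$.
\item Pseudo-Anosov north--south dynamics, with dense pole pairs, upgrades this to $(x\cdot\bar y)\,X\cdot y+(y\cdot\bar x)\,Y\cdot x=0$ for all $x\in V$, $y\in V^*$. This forces $X=c\bar x$.
\item The normalization $l_{\psi_*\sigma}=1$, together with Bonahon's Lipschitz estimate, forces $c=0$, contradicting $X\neq 0$.
\end{itemize}
All the $n\to\infty$ limits in this argument are applied to fixed limiting objects, so the non-uniformity that breaks your Step 2 never enters.
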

	\begin{proof} Consider a filtration $M_i\subset M_{i+1}\subset \cdots \subset\text{Map}(\Sigma)$ where $M_i^{-1}=M_i$, $\cup_{i\in\N}M_i=\text{Map}(\Sigma)$, and $\#M_i<+\infty$. We argue by contradiction and suppose the theorem does not hold. In this case we can find $\mu_i,\tilde \mu_i, \nu_i,\tilde\nu_i$ in $\mathcal{ML}(\sigma)$ so that 
		\begin{equation}\label{hypothesis}
			i(\mu_i,\phi(\nu_i))=i(\tilde\mu_i,\phi(\tilde \nu_i))\quad\text{for all }\phi\in M_i
		\end{equation}
		but $(\mu_i,\nu_i)\neq (\tilde\mu_i,\tilde\nu_i)$. The space $\mathcal{ML}(\sigma)$  is compact and so, after passing to a subsequence, we assume that $(\mu_i,\nu_i)\to (\mu,\nu)$ and $(\tilde\mu_i,\tilde\nu_i)\to (\tilde \mu,\tilde\nu)$, where $\mu,\tilde \mu, \nu,\tilde\nu\in \mathcal{ML}(\sigma)$.
		\begin{lemm}\label{limits.equal} We have $\mu=\tilde \mu$ and $\nu=\tilde\nu$.
		\end{lemm}
		\begin{proof}
			From continuity of intersection we have that $i(\mu,\phi(\nu))=i(\tilde\mu,\phi(\tilde \nu))$ for all $\phi \in \text{Map}(\Sigma)$. Consider a simple closed geodesic $\gamma$ and the Dehn twist $D_\gamma\in  \text{Map}(\Sigma)$.  We obtain from \eqref{dehn.twist.limit} that for all $\phi \in \text{Map}(\Sigma)$
			\begin{align*}
				i(\mu,\phi(\delta_\gamma))i(\nu,\delta_\gamma)&=\lim_{n\to\infty}\frac{i(\mu,\phi \circ D_\gamma^n(\nu))}{n}=\lim_{n\to\infty}\frac{i(\tilde \mu,\phi \circ D_\gamma^n(\tilde\nu))}{n}\\
				&=i(\tilde \mu,\phi(\delta_\gamma))i(\tilde \nu,\delta_\gamma).
			\end{align*}
			Using the denseness of $\mathcal O(\gamma)$ (see \eqref{orbit})  in $\mathcal{ML}(\Sigma)$ we conclude that
			$$i(\mu,\tau)i(\nu,\delta_\gamma)= i(\tilde\mu,\tau)i(\tilde\nu,\delta_\gamma) \quad\text{for all }\tau \in\mathcal{ML}(\Sigma).$$
			Using the denseness of  $\{t\delta_\gamma:t>0,\gamma \in \mathcal S\}$ in $\mathcal{ML}(\Sigma)$  we conclude that
			$$i(\mu,\tau)i(\nu,\lambda)= i(\tilde\mu,\tau)i(\tilde\nu,\lambda) \quad\text{for all }\lambda,\tau \in\mathcal{ML}(\Sigma).$$ Choose $\lambda_0\in \mathcal{ML}(\Sigma)$ with $i(\nu,\lambda_0)>0$. Then $i(\tilde\nu,\lambda_0)>0$ as well, and
			$$i(\mu,\tau)=c\, i(\tilde\mu,\tau)\quad\text{for all }\tau \in\mathcal{ML}(\Sigma),\qquad c=\frac{i(\tilde\nu,\lambda_0)}{i(\nu,\lambda_0)}.$$
			Hence $\mu=c\tilde\mu$. Exchanging the roles of $(\mu,\tilde\mu)$ and $(\nu,\tilde\nu)$ we also get $\nu=c^{-1}\tilde\nu$. The fact that $\mu,\tilde \mu, \nu,\tilde\nu\in \mathcal{ML}(\sigma)$ implies that $c=1$.
		\end{proof}
		Consider convex open cones $V,V^*\subset  \R^{6(g-1)}$ described in Section \ref{currents.section} and the homeomorphisms onto their images $$T:V \rightarrow \mathcal{ML}(\Sigma), \quad T^*:V^*\rightarrow \mathcal{ML}(\Sigma).$$ Set $\Omega=T(V)$ and $\Omega^*=T^*(V^*)$. We argue that, without loss of generality, we can assume $\mu \in \Omega$ and $\nu\in \Omega^*$. Choose simple closed geodesics $\gamma, \gamma^*$ so that $\delta_\gamma\in \Omega$, $\delta_{\gamma^*} \in \Omega^*$, and $i(\mu,\delta_\gamma), i(\nu,\delta_{\gamma^*})>0$. From \eqref{dehn.twist.limit} and scale invariance of $\Omega, \Omega^*$ we see that we can choose $\bar n\in \N$ so that, setting $\psi=D^{\bar n}_{\gamma}$ and $\psi'=D^{\bar n}_{\gamma^*}$,
		$$\psi(\mu)=D^{\bar n}_{\gamma}(\mu)\in \Omega\quad\text{and}\quad \psi'(\nu)=D^{\bar n}_{\gamma^*}(\nu)\in \Omega^*.$$
		Set $u=\psi(\mu)$, $v=\psi'(\nu)$, and denote $\psi(\mu_i), \psi'(\nu_i), \psi(\tilde\mu_i), \psi'(\tilde\nu_i)$ by $u_i,v_i,\tilde u_i,\tilde v_i$, respectively. We have $(u_i,v_i), (\tilde u_i,\tilde v_i)\to (u,v)$ by Lemma \ref{limits.equal} and so we assume that
		\begin{equation}\label{sequence.face}
			u, u_i,\tilde u_i \in \Omega\quad\text{and}\quad  v, v_i, \tilde v_i \in \Omega^*\quad\text{for all }i\in\N.
		\end{equation}
		Note that $u\in  \mathcal{ML}(\psi_*(\sigma))$ because
		$$i(u,\lambda_{\psi_*\sigma})=i(u,\psi(\lambda_\sigma))=i(\psi(\mu),\psi(\lambda_\sigma))=i(\mu,\lambda_\sigma)=1.$$
		The same computation shows 
		$$ u, u_i, \tilde u_i\in \mathcal{ML}(\psi_*(\sigma)).$$  Furthermore we obtain from \eqref{hypothesis} that  given $\phi \in\text{Map}(\Sigma)$ we have for all $i\in\N$ large enough so that $\psi^{-1}\circ \phi\circ\psi'\in M_i$,
		\begin{align}\label{sequence.identity}
			i(u_i,\phi(v_i))&=i(\psi(\mu_i),\phi\circ\psi'(\nu_i))=i(\mu_i,\psi^{-1}\circ \phi\circ\psi'(\nu_i))\\ \notag
			&=i(\tilde \mu_i,\psi^{-1}\circ \phi\circ\psi'(\tilde \nu_i))=i(\tilde u_i,\phi(\tilde v_i)).
		\end{align}
		From \eqref{sequence.face} we find $x_i,\tilde x_i, \bar x\in V$ and  $y_i,\tilde y_i,\bar y \in V^*$ so that 
		$$T(\bar x)=u, \,T(x_i)=u_i, \,T(\tilde x_i)=\tilde u_i, \, T^*(\bar y)=v, \,T^*(y_i)=v_i, \,T^*(\tilde y_i)=\tilde v_i.$$
		We must have $x_i, \tilde x_i\to \bar x$ in $V$ and $y_i, \tilde y_i\to \bar y$ in $V^*$ because $T$ and $T^*$ are homeomorphisms onto their images. After passing to a further subsequence, one of the inequalities
		$$|x_i-\tilde x_i| \geq|y_i-\tilde y_i| \quad\text{or}\quad |y_i-\tilde y_i|\geq|x_i-\tilde x_i|$$
		holds for all $i\in\N$. Because $M_i=M_i^{-1}$, the argument is symmetric in the two factors, and so we assume that for all $i\in\N$
		$$r_i=|x_i-\tilde x_i| \geq|y_i-\tilde y_i|.$$
		Since $(u_i,v_i)\neq(\tilde u_i,\tilde v_i)$, we have $r_i>0$ for all $i\in\N$. After passing to a subsequence, we assume that
		$$\frac{x_i-\tilde x_i}{r_i}\to X\neq 0\quad\text{and}\quad \frac{y_i-\tilde y_i}{r_i}\to Y.$$
		
		Next we show that $X=c\bar x$ for some $c\neq 0$.
		\begin{lemm}\label{first.linear} If $\phi\in \text{Map}(\Sigma)$ is such that $\phi^{-1}(u)\in \Omega$ and $\phi(v)\in \Omega^*$, then setting $x(\phi^{-1})=T^{-1}(\phi^{-1}(u))\in V$ and $y(\phi)=(T^*)^{-1}(\phi(v))$, we have 
			$$X\cdot y(\phi)+Y \cdot x(\phi^{-1})=0.$$
		\end{lemm}
		
		\begin{proof}
			For all $i$ sufficiently large we find $x_i(\phi^{-1})\in V$, $y_i(\phi)\in V^*$ so that $\phi^{-1}(\tilde u_i)=T(x_i(\phi^{-1}))$ and $\phi(v_i)=T^*(y_i(\phi))$. We must  have $x_i(\phi^{-1})\to x(\phi^{-1})$ and $y_i(\phi)\to y(\phi)$.
			Using \eqref{sequence.identity} we see that for all $i\in\N$ large
			\begin{align*}
				0&=i(u_i,\phi(v_i))-i(\tilde u_i,\phi(\tilde v_i))\\
				&=i(u_i,\phi(v_i))-i(\tilde u_i,\phi(v_i))+i(\tilde u_i,\phi(v_i))-i(\tilde u_i,\phi(\tilde v_i))
			\end{align*}
			and so, recalling that $i(T(x),T^*(y))=x \cdot y$ (see Section \ref{currents.section}), we have
			\begin{align*}
				0&=i(u_i,\phi(v_i))-i(\tilde u_i,\phi(v_i))+i(\phi^{-1}(\tilde u_i),v_i)-i(\phi^{-1}(\tilde u_i),\tilde v_i)\\
				&=x_i\cdot y_i(\phi)-\tilde x_i\cdot y_i(\phi)+x_i(\phi^{-1})\cdot y_i-x_i(\phi^{-1})\cdot \tilde y_i.
			\end{align*}
			Therefore, dividing by $r_i$ on both sides and using that $x_i(\phi^{-1})$ and $y_i(\phi)$ converge to $x(\phi^{-1})$ and $y(\phi)$, respectively, we see that
			$$0=\lim_{i\to\infty}(r_i^{-1}(x_i-\tilde x_i)\cdot y_i(\phi)+r_i^{-1}x_i(\phi^{-1})\cdot (y_i-\tilde y_i))=X \cdot y(\phi)+Y \cdot x(\phi^{-1}).$$
		\end{proof}
		\begin{lemm}\label{second.linear}
			We have for all $x\in V$ and $y\in V^*$ that
			$$(x \cdot \bar y)X \cdot y+(y \cdot \bar x)Y  \cdot x=0.$$
		\end{lemm}
		\begin{proof}
			Consider $x\in V$ and $y\in V^*$ for which we can find $c>1$ and a pseudo-Anosov diffeomorphism $\phi\in  \text{Map}(\Sigma)$ so that 
			$$\phi(T(x))=c^{-1}T(x)\quad\mbox{and}\quad \phi(T^*(y))=cT^*(y).$$
			From Section \ref{mpg.subsection} we know that such pairs $(T(x),T^*(y))$ are dense in $\Omega\times\Omega^*$. 
			
			We also assume that neither $x$ nor $y$ is a multiple of $\bar x$ or $\bar y$. Thus, we have from \eqref{pseudo.anosov} and $i(T(x'),T^*(y'))=x'\cdot y'$ if  $x'\in V$ and $y'\in V^*$, that
			\begin{align}\label{pseudo.anosov2}
				\lim_{n\to\infty}\frac{\phi^n(v)}{c^n}&=\frac{i(T(x),v)}{i(T(x),T^*(y))}T^*(y)=\frac{x\cdot \bar y}{x\cdot y}T^*(y)\\ \notag
				\lim_{n\to\infty}\frac{\phi^{-n}(u)}{c^n}&=\frac{i(T^*(y),u)}{i(T(x),T^*(y))}T(x)=\frac{y\cdot \bar x}{x\cdot y}T(x).
			\end{align}
			Hence $\phi^n(v)\in \Omega^*$ and $\phi^{-n}(u)\in \Omega$ for all $n$ very large. Consider $x(\phi^{-n})\in V$, $y(\phi^n)\in V^*$ so that $T(x(\phi^{-n}))=\phi^{-n}(u)$ and $T^*(y(\phi^n))=\phi^{n}(v)$. Therefore we obtain from \eqref{pseudo.anosov2} that
			\begin{equation*}
				\lim_{n\to\infty}\frac{y(\phi^n)}{c^n}=\frac{x\cdot \bar y}{x\cdot y}y,\,\,\, \lim_{n\to\infty}\frac{x(\phi^{-n})}{c^n}=\frac{y\cdot \bar x}{x\cdot y}x.
			\end{equation*}
			Inserting in Lemma \ref{first.linear}, applied to $\phi^n$, and making $n\to\infty$ we have 
			$$X\cdot \frac{y(\phi^n)}{c^n}+Y\cdot \frac{x(\phi^{-n})}{c^n}=0\implies  (x\cdot \bar y)X \cdot y+(y \cdot \bar x)Y\cdot x=0.$$
			The fact that the pairs $(x,y)$ above are dense in $V\times V^*$ implies the desired result.
		\end{proof}
		
		Fix $x\in V$. Since $\bar y\in V^*$, we have $x\cdot \bar y\neq 0$. Considering the identity given by Lemma \ref{second.linear} for all $y$ in the open cone $V^*$ we deduce that $X=c\bar x$ for some $c\neq 0$. 
		
		We now argue that $c=0$, giving us a contradiction. Bonahon \cite[Section 6]{bonahon4} showed that  
		\begin{equation}\label{f.differentiable}
			f:V\rightarrow \R,\quad f(x)=l_{\psi_*\sigma}(T(x))
		\end{equation} is locally Lipschitz.
		We have $f(x_i)=f(\tilde x_i)=1$ and $\tilde x_i\to \bar x$. Thus $x_i=(1+cr_i)\tilde x_i+o(r_i)$ and so, from homogeneity, 
		$$1=f(x_i)=f((1+cr_i)\tilde x_i) +o(r_i)=(1+cr_i) f(\bar x_i)+o(r_i)=1+cr_i+o(r_i).$$ 
		Hence $c=0$.
		 
	\end{proof} 
	
	\bibliographystyle{amsbook}

\end{document}